
\documentclass[11pt, draft]{amsart}
\usepackage{amssymb, amstext, amscd, amsmath, amssymb}
\usepackage{mathtools, xypic, paralist, color, dsfont, rotating, bbm}
\usepackage{verbatim}
\usepackage{enumerate,enumitem}
\usepackage{relsize} 
\usepackage{float}
\usepackage{setspace}




\usepackage{tikz}

\floatstyle{boxed} 
\restylefloat{figure}

\numberwithin{equation}{section}
\footskip=20pt 

\usepackage{quoting}
\quotingsetup{vskip=.1in}
\quotingsetup{leftmargin=.17in}
\quotingsetup{rightmargin=.17in}

\let\OLDthebibliography\thebibliography
\renewcommand\thebibliography[1]{
  \OLDthebibliography{#1}
  \setlength{\parskip}{0pt}
  \setlength{\itemsep}{2pt plus 0.5ex}
}

\usepackage[a4paper]{geometry}
\geometry{
    tmargin= 4.3cm, 
    bmargin= 4cm, 
    rmargin= 3.1cm, 
    lmargin= 3.1cm 
    }

%
\makeatletter
\def\@cite#1#2{{\m@th\upshape\bfseries%
[{#1\if@tempswa{\m@th\upshape\mdseries, #2}\fi}]}}
\makeatother
%
\theoremstyle{plain}
\newtheorem{theorem}{Theorem}[section]
\newtheorem{corollary}[theorem]{Corollary}
\newtheorem{proposition}[theorem]{Proposition}
\newtheorem{lemma}[theorem]{Lemma}
\theoremstyle{definition}
\newtheorem{definition}[theorem]{Definition}
\newtheorem{example}[theorem]{Example}

\newtheorem{remark}[theorem]{Remark}

\newtheorem{problem}[theorem]{Problem}
\newtheorem*{acknow}{Acknowledgements}

\theoremstyle{remark}


%

\mathtoolsset{centercolon}
%
  \newcommand{\A}{{\mathcal{A}}}
  \newcommand{\B}{{\mathcal{B}}}

  \newcommand{\F}{{\mathcal{F}}}
  
\renewcommand{\H}{{\mathcal{H}}}
  \newcommand{\I}{{\mathcal{I}}}
  \newcommand{\J}{{\mathcal{J}}}

  \newcommand{\T}{{\mathcal{T}}}

  \newcommand{\W}{{\mathcal{W}}}


\def\al{\alpha}

\def\De{\Delta}
\def\de{\delta}

\def\la{\lambda}

\def\om{\omega}




\newcommand{\bC}{\mathbb{C}}

\newcommand{\bN}{\mathbb{N}}


\newcommand{\fl}{{\mathfrak{l}}}
\newcommand{\fJ}{{\mathfrak{J}}}



\newcommand{\foral}{\text{ for all }}


\newcommand{\eL}{\bar{L}}
\newcommand{\bTl}{\bar{\T}_{\lambda}}
\newcommand{\bTu}{\bar{\T}}

\newcommand{\loplus}{\mathlarger{\mathlarger{\oplus}}}
\newcommand{\bt}{\bar{t}}
\newcommand{\bl}{\bar{\lambda}}
\newcommand{\blp}{\bar{\lambda}^{+}}

\newcommand{\eel}{\bar{\ell}^2(P)} 
 
\newcommand{\enl}{\bar{l}}

\newcommand{\ca}{\mathrm{C}^*}

\newcommand{\cenv}{\mathrm{C}^*_{\textup{env}}}

\newcommand{\ol}{\overline}


\newcommand{\ad}{\operatorname{ad}}

\newcommand{\cmax}{\mathrm{C}^*_{\textup{max}}}

\newcommand{\id}{{\operatorname{id}}}

\newcommand{\cspan}{\overline{\operatorname{span}}}



\newtheorem*{theorem*}{Theorem}
\newtheorem*{corollary*}{Corollary}
\newtheorem*{proposition*}{Proposition}
\newtheorem*{lemma*}{Lemma}
\newtheorem*{remark*}{Remark}
\newtheorem*{definition*}{Definition}
\newtheorem*{problem*}{Problem}

\usepackage[normalem]{ulem}
\usepackage{xstring}

\def\dep{\ol{\de}^+}

\def\couni{C}

\def\iotenv{\iota_{\textup{env}}}
\def\delenv{\delta_{\textup{env}}}


\begin{document}

\title[Enhanced left regular representation]
{Fell's absorption principle for semigroup operator algebras}
\date{March 12, 2023.}
\author[E.G. Katsoulis]{Elias G. Katsoulis}
\address{Department of Mathematics\\ East Carolina University\\ Greenville\\ NC 27858\\USA}
\email{katsoulise@ecu.edu}

\thanks{2010 {\it  Mathematics Subject Classification.} 46L08, 46L05}

\thanks{{\it Key words and phrases:} semigroup, C*-algebra, Fell's absorption principle, tensor algebra, co-universal algebra, coaction.}

\begin{abstract}
Fell's absorption principle states that the left regular representation of a group absorbs any unitary representation of the group when tensored with it. In a weakened form, this result carries over to the left regular representation of a right LCM submonoid of a group and its Nica covariant isometric representations but it fails if the semigroup does not satisfy independence. In this paper we explain how to extend Fell's absorption principle to an arbitrary submonoid $P$ of a group $G$ by using an enhanced version of the left regular representation. Li's semigroup $\ca$-algebra $\ca_s(P)$ and its representations appear naturally in our context. Using the enhanced left regular representation, we not only provide a very concrete presentation for the reduced object for $\ca_s(P)$ but we also resolve open problems and obtain very transparent proofs of earlier results. In particular, we address the non-selfadjoint theory and we show that the non-selfadjoint object attached to the enhanced left regular representation coincides with that of the left regular representation. We obtain a non-selfadjoint version of Fell's absorption principle involving the tensor algebra of a semigroup and we use it to improve recent results of Clouatre and Dor-On on the residual finite dimensionality of certain $\ca$-algebras associated with such tensor algebras. As another application, we give yet another proof for the existence of a $\ca$-algebra which is co-universal for equivariant, Li-covariant representations of a submonoid $P$ of a group $G$.
\end{abstract}

\maketitle




\section{Introduction}

Let $G$ be a discrete group and let $ L:= \{L_s\}_{s \in G}$
be its left regular representation, i.e., $L_s(\delta_t) = \delta_{st}$, $s, t \in G$, where $\{\delta_t\}_{t \in G}$ is the canonical basis of $\ell^2(G)$. If $U: = \{ U_s\}_{s \in G}$ is any unitary representation of $G$ on Hilbert space $\H$, then \textit{Fell's absorption principle} states that the representations $L\otimes I$ and $L\otimes U$ are unitarily equivalent and so the map
\[
\ca_{r} (G) \ni L_s \longmapsto L_s\otimes U_s \in B(\ell^2(G) \otimes \H)
\]
extends to a $*$-isomorphism between the generated $\ca$-algebras. 

Let now $P\subseteq G$ be a submonoid. Since its left regular representation on $\ell^2(P)$, denoted again as $L$, is now an isometric representation, one is naturally led to the study of isometric representations of $P$ on Hilbert space (which coincide with the unitary representations if $P$ happens to be a group). If $V = \{V_p\}_{p\in P}$ is such an isometric representation of $P$, one does not expect the representations $L\otimes I$ and $L\otimes V$ to be unitarily equivalent as in the group case. (Indeed, if $P=\bN \backslash \{1\}$, then $L \otimes I$ and $L\otimes L$ are not unitarily equivalent.) Nevertheless, in the case where $P$ is a right LCM submonoid of a group, Fell's absorption principle generalizes in the following form, which is well-known to experts.

\begin{proposition}\label{P:P coa B}
Let $P$ be a right LCM submonoid of a group $G$ and let $V = \{V_p\}_{p\in P}$ be a Nica-covariant representation of $P$.
Then the map
\begin{equation} \label{eq: LCMmap}
 L_p \longmapsto L_p \otimes V_p
\end{equation}
extends to a $*$-isomorphism between the ambient $\ca$-algebras.
\end{proposition}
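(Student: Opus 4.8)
The plan is to realize the desired $*$-isomorphism as the inverse of a homomorphism handed to us by an intertwining isometry, and then to establish injectivity through the gauge coaction of $G$. Write $A:=\ca_r(P)$ and, once it is seen to be meaningful, $B:=\ca(\{L_p\otimes V_p\})$. First I would check that $p\mapsto L_p\otimes V_p$ is an isometric Nica-covariant representation: it is plainly isometric, and since both $L$ and $V$ are Nica-covariant, for $p,q\in P$ the product $(L_pL_p^*\otimes V_pV_p^*)(L_qL_q^*\otimes V_qV_q^*)=L_pL_p^*L_qL_q^*\otimes V_pV_p^*V_qV_q^*$ equals $L_rL_r^*\otimes V_rV_r^*$ when $pP\cap qP=rP$ and $0$ when $pP\cap qP=\mt$. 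Thus $B$ is well defined, and it suffices to produce a $*$-isomorphism $\beta\colon B\to A$ with $\beta(L_p\otimes V_p)=L_p$.

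Next I would introduce the isometry $W\in\B(\ell^2(P)\otimes\H)$ determined by $W(\delta_p\otimes\xi)=\delta_p\otimes V_p\xi$; orthogonality of the $\delta_p$ together with $V_p^*V_p=I$ make $W$ an isometry, albeit not a surjection. Using $V_{pq}=V_pV_q$ and $V_p^*V_q=V_{p^{-1}q}$ for $q\in pP$, a short computation gives $W(L_p\otimes I)=(L_p\otimes V_p)W$ together with $W(L_p^*\otimes I)=(L_p^*\otimes V_p^*)W$. Hence $W$ intertwines the generators of $A\otimes I$ and of $B$ and their adjoints, and $W^*W=I$ yields $a\otimes I=W^*\sigma(a)W$ for every $a$ in the $*$-algebra generated by the $L_p$, where $\sigma(L_p)=L_p\otimes V_p$. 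In particular $\|a\|\le\|\sigma(a)\|$, so $L_p\otimes V_p\mapsto L_p$ extends to a surjective $*$-homomorphism $\beta\colon B\to A$; the whole problem is thereby reduced to showing that $\beta$ is injective.

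To prove injectivity I would use that $A$ and $B$ both carry the normal coaction of $G$ determined by $L_p\mapsto L_p\otimes\la_p$ and $L_p\otimes V_p\mapsto(L_p\otimes V_p)\otimes\la_p$, where $\la$ is the left regular representation of $G$. Slicing by the canonical faithful trace of $\ca_r(G)$ produces faithful conditional expectations $\Phi^A_e,\Phi^B_e$ onto the fixed-point algebras $A_e,B_e$, and $\beta$ is manifestly equivariant, so $\beta\circ\Phi^B_e=\Phi^A_e\circ\beta$. Consequently, if $b\in B$ satisfies $\beta(b)=0$ then $\beta(\Phi^B_e(b^*b))=\Phi^A_e(\beta(b^*b))=0$; provided $\beta$ is injective on $B_e$, faithfulness of $\Phi^B_e$ forces $b^*b=0$, whence $b=0$. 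This reduces injectivity of $\beta$ to injectivity on the cores.

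Finally, Nica-covariance collapses every product of the range projections $L_pL_p^*\otimes V_pV_p^*$ to another one or to $0$, so $B_e=\cspan\{L_pL_p^*\otimes V_pV_p^*\}$, $A_e=\cspan\{L_pL_p^*\}$, and $\beta|_{B_e}$ is the map $L_pL_p^*\otimes V_pV_p^*\mapsto L_pL_p^*$. I expect the injectivity of this map to be the main obstacle: one must rule out any relation among the $L_pL_p^*$ in the regular representation that is strictly finer than the Nica relations automatically satisfied by the $V_pV_p^*$. Here the right LCM hypothesis is essential, since the constructible right ideals $\{pP\}\cup\{\mt\}$ are closed under intersection and form a semilattice, the diagonal $A_e$ is canonically the $\ca$-algebra of this semilattice on which the regular representation is faithful, and every Nica-covariant $V$ furnishes a representation of the same semilattice. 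This produces a homomorphism $L_pL_p^*\mapsto L_pL_p^*\otimes V_pV_p^*$ inverse to $\beta|_{B_e}$, giving injectivity on the core and hence the proposition. It is precisely this faithfulness of the diagonal that fails for submonoids without independence, which is what makes the unrestricted Fell absorption break down.
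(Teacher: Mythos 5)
Your proof is correct, but note that the paper never actually proves Proposition~\ref{P:P coa B}: it is stated as ``well-known to experts,'' with \cite{DK20, DKKLL} cited for the definitions, so the comparison must be with the machinery the paper develops for its generalizations. Structurally you run parallel to the proofs of Proposition~\ref{doubling} and Theorem~\ref{thm;main1}: a gauge coaction reduces injectivity to the unit fiber (your faithful-expectation argument is an unwound version of the paper's appeal to \cite[Proposition 3.7]{Exe97}), and your intertwining isometry $W$ is the same device the paper uses in the proof of Theorem~\ref{thm;main3}, there in the non-selfadjoint setting. The genuine difference is on the core. The paper's general-purpose substitute is Lemma~\ref{basicLem} combined with faithfulness on monomials (Lemma~\ref{lem;faithmon}), which works for arbitrary submonoids once $L$ is replaced by $\eL$ and Nica covariance by representations of $\bTu(P)$; you instead exploit that for a right LCM monoid $\fJ=\{pP \mid p \in P\}\cup\{\mt\}$ is a semilattice satisfying independence, so that by \cite{Li12} the diagonal $\cspan\{L_pL_p^*\}$ is universal for semilattice representations, and Nica covariance of $L\otimes V$ then hands you the inverse of $\beta|_{B_e}$. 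Your route gives a self-contained proof in the LCM case and isolates exactly the point---faithfulness/universality of the diagonal---whose failure without independence is what breaks unrestricted Fell absorption and motivates the paper's enhanced representation; the paper's route buys generality beyond independence at the cost of the heavier Fell-bundle apparatus. The two facts you assert without detail (existence of the normal coaction on $\ca(\{L_p\otimes V_p\})$, obtained exactly as in Proposition~\ref{P:f coaction} via $\ad_U$, and faithfulness of the trace-slice expectation associated to a normal coaction) are standard and do not constitute gaps.
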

\noindent In particular, one can take $V_p=L_p$, $p \in P$, in Proposition~\ref{P:P coa B} and therefore obtain that the left regular representation of a right LCM submonoid is self-absorbing, in the sense that (\ref{eq: LCMmap}) extends to a $*$-isomorphism. (We will not be defining right LCM submonoids or Nica representations in this paper as we do not make use of them; the reader should consult \cite{DK20, DKKLL} for the pertinent definitions and an extension of Proposition~\ref{P:P coa B} to product system.)

Perhaps counterintuitively, Proposition~\ref{P:P coa B} may not hold if one drops the assumption of $P$ being right LCM submonoid\footnote{ The author was not aware of this pathology and is grateful to Marcelo Laca and Camila Sehnem for making him aware of it.}. Indeed, it follows from \cite[Proposition 2.24]{Li12} that if $P$ does not satisfy independence, then the canonical map 
\[
L_p\longmapsto L_p\otimes L_p \in B(\ell^2(P)\otimes\ell^2(P))
\]
does not extend to a $*$-homomorphism of the generated $\ca$-algebras. Thus Fell's absorption principle fails in its most basic form as the left regular representation may not be self absorbing in general. The research in this paper was stimulated by this phenomenon and motivated us to search for possible generalizations of Fell's absorption principles in the spirit of Proposition~\ref{P:P coa B}. As it turns out, this can be done and the key step for such a generalization is an ``enhancement" of the left regular representation (Definition~\ref{def;enhanced}). In Corollary~\ref{C:firststep} we show that the enhanced left regular representation is indeed self-absorbing. Using this result as a first step, we introduce the $\ca$-algebra $\bTu(P)$, which is the full $\ca$-algebra of the Fell bundle associated with the enhanced left regular representation of $P$. We establish two versions of Fell's absorption principle for submonoids of groups. Theorem~\ref{thm;main1} generalizes Proposition~\ref{P:P coa B} to arbitrary submonoids of groups using the representations of $\bTu(P)$ as a substitute for Nica covariant representations. Theorem~\ref{thm;main2} characterizes which isometric representations of a semigroup $P$ are actually absorbed by its enhanced left regular representation.
In order to establish Theorem~\ref{thm;main2}, we discover that Li's semigroup $\ca$-algebra $\ca_s(P)$ \cite{Li12} and its representations play a pivotal role. Indeed, in Theorem~\ref{thm;Li} we show that the universal algebra $\bTu(P)$ is canonically isomorphic with Li's semigroup $\ca$-algebra $\ca_s(P)$. This result is not only a key ingredient in the proof of  Theorem~\ref{thm;main2} but it also leads to short proofs of earlier results that had more involved proofs. These improvements together with a pertinent discussion appear at the second half of Section~\ref{sec;Li} and at the end of Section~\ref{Sec;nsa}.

In Section~\ref{Sec;nsa} we look at the non-selfadjoint aspects of the theory. In Theorem~\ref{prop;identify} we show that the non-selfadjoint object attached to the enhanced left regular representation coincides with that attached to the left regular representation, i.e., the tensor algebra $\T_{\la}(P)^+$ of $P$. A significant generalization of this fact is later obtained in Theorem~\ref{P:P cover}. In Theorem~\ref{thm;char} we completely characterize the submonoids $P$ for which $\T_{\la}(P)^+$ admits a character $\omega$ satisfying $\om(L_p)=1$, for all $p \in P$.
In Theorem~\ref{thm;main3} we obtain a non-selfadjoint version of Fell's absorption principle. In particular, we obtain a characterization of which representations of $P$ extend to coactions of $P$ on the tensor algebra $\T_{\la}(P)^+$, provided that $P$ is left reversible and contained in an amenable group. 
We then use our non-selfadjoint version of Fell's absorption principle in order to improve recent results from Clouatre and Dor-On \cite{ClD} regarding the residual finite dimensionality of the maximal $\ca$-algebra of various tensor algebras. As another application, we give yet another proof for the existence of a $\ca$-algebra which is co-universal for equivariant, Li-covariant representations of a submonoid $P$ of a group $G$. This proof is quite elementary and avoids the use of partial crossed products or strong covariance. 

The paper ends with a short Section containing examples and open problems.

\section{Coactions and their Fell bundles} \label{S;cosystem}

In this introductory section, which borrows heavily from \cite[Section 3]{DKKLL},  we list several prerequisites regarding coactions and their associated Fell bundles. We will not be defining here what a Fell bundle is and we will not be listing the basic results of their $\ca$-algebraic theory. We suggest that the reader consults \cite{Exe97} for an expedited introduction and \cite{Exe17} for a detailed treatment. 

In what follows $G$ will always denote a fixed discrete group.
We write $u_g$, $g \in G$, for the generators of the universal group C*-algebras $\ca(G)$ and $l_g$, $g \in G$, for the generators of the reduced group $\ca$-algebra $\ca_r(G)$.
We write $l \colon \ca(G) \to \ca_r(G)$ for the canonical $*$-epimorphism.
Recall that $\ca(G)$ admits a faithful $*$-homomorphism
\[
\De \colon \ca(G) \to \ca(G) \otimes \ca(G) 
\text{ such that }
\De(u_g) = u_g \otimes u_g.
\]

\begin{definition}\label{D:cis coa}
Let $A$ be an operator algebra.
A \emph{coaction of a discrete group $G$ on $A$} is a completely isometric representation $\de \colon A \to A \otimes \ca(G)$ such that $\sum_{g \in G} A_g$ is norm-dense in $A$  for spectral subspaces 
\[
A_g := \{a \in A \mid \de(a) = a \otimes u_g\}. 
\]
If, in addition, the map $(\id \otimes l) \de$ is injective then the coaction $\de$ is called \emph{normal}. 

A map $\de$ as in Definition \ref{D:cis coa} automatically satisfies the coaction identity
\begin{equation} \label{eq;coactionid}
(\de \otimes \id_{\ca(G)}) \de = (\id_{A} \otimes \De) \de.
\end{equation}
Indeed, (\ref{eq;coactionid}) is readily seen to hold on $A_g$, $g \in G$, and therefore on $A$, since $\sum_{g \in G} A_g$ is norm-dense in $A$.
\end{definition}

It follows from the definition that if $\de \colon A \to A \otimes \ca(G)$  is a coaction then
\[
A_g \cdot A_h \subseteq A_{g h} \foral g, h \in G,
\]
because $\de$ is a homomorphism.
Conversely, if there are subspaces $\{A_g\}_{g \in G}$ such that $\sum_{g \in G} A_g$ is norm-dense in $A$ and a representation $\de \colon A \to A \otimes \ca(G)$ such that
\[
\de(a_g) = a_g \otimes u_g \foral a_g \in A_g, g \in G,
\]
then $\de$ is a coaction of $G$ on $A$.
Indeed $\de$ satisfies the coaction identity and it is completely isometric since $(\id_{A} \otimes \chi) \de = \id_{A}$.

In \cite[Remark 3.2]{DKKLL} it was noted that if $\de \colon A \to A \otimes \ca(G)$  is a coaction so that $\de$ extends to a $*$-homomorphism $\de \colon \ca(A) \to \ca(A) \otimes \ca(G)$ satisfying the coaction identity
\[
(\de \otimes \id) \de(c) = (\id \otimes \De) \de(c) \foral c \in \ca(A), 
\]
then $\de$ is automatically non-degenerate on $\ca(A)$, i.e.,
\[
\ol{\de(\ca(A)) \left[\ca(A) \otimes \ca(G)\right]} = \ca(A) \otimes \ca(G).
\]

In \cite[Remark 3.3]{DKKLL} it was noted that Definition \ref{D:cis coa} coincides with that of Quigg \cite{Qui96} when $A$ is a C*-algebra. Furthermore in this case $\de$ is a faithful $*$-homomorphism and we have that
\[
(A_g)^* = \{a^* \in A \mid \de(a^*) = a^* \otimes u_{g^{-1}} \} = A_{g^{-1}}.
\]
Also the coaction is then non-degenerate, i.e., it is a \emph{full} coaction.

The following gives a sufficient condition for the existence a compatible  normal  coaction.
\begin{proposition} \cite[Proposition 3.4]{DKKLL} \label{prop;KKLLco}
Let $A$ be an operator algebra and let $G$ be a group.
Suppose there are subspaces $\{A_g\}_{g \in G}$ such that $\sum_{g \in G} A_g$ is norm-dense in $A$, and there is a completely isometric homomorphism
\[
\de_\la \colon A \longrightarrow A \otimes \ca_r(G)
\]
such that
\begin{equation}\label{E:reduced}
\de_\la(a) = a \otimes l_g \foral a \in A_g, g \in G.
\end{equation}
Then $A$ admits a normal coaction $\de$ of $G$ satisfying  $\de_\la = (\id \otimes l) \de$.
\end{proposition}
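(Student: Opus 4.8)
The plan is to manufacture the full coaction $\de$ out of $\de_\la$ by a single application of Fell's absorption principle, thereby bypassing the fact that there is no bounded way to ``evaluate $l_g \mapsto 1$'' on $\ca_r(G)$ when $G$ is non-amenable. First I would record the one external input: Fell's absorption principle supplies a completely isometric $*$-homomorphism $\Phi \colon \ca_r(G) \to \ca_r(G) \otimes \ca(G)$ determined by $\Phi(l_g) = l_g \otimes u_g$. Indeed, the representation $g \mapsto l_g \otimes u_g$ is, by Fell's principle, unitarily equivalent to $g \mapsto l_g \otimes 1$, a multiple of the left regular representation, and therefore factors through $\ca_r(G)$; the resulting $*$-homomorphism is injective, hence completely isometric, because the counit $\chi \colon \ca(G) \to \bC$, $\chi(u_g) = 1$, satisfies $(\id \otimes \chi) \Phi = \id_{\ca_r(G)}$. (This is just the statement that $\ca_r(G)$ carries its canonical normal coaction.)

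Next I would amplify and compose. Since $\Phi$ is completely isometric, so is $\id_A \otimes \Phi$, and therefore
\[
\Theta := (\id_A \otimes \Phi) \circ \de_\la \colon A \longrightarrow A \otimes \ca_r(G) \otimes \ca(G)
\]
is a completely isometric homomorphism. On the dense subspace $\sum_g A_g$ it is given by $\Theta(a) = \sum_g a_g \otimes l_g \otimes u_g = \sum_g \de_\la(a_g) \otimes u_g$ for $a = \sum_g a_g$. The crucial observation is that each $\de_\la(a_g)$ lies in $\de_\la(A)$, so by continuity $\Theta$ maps $A$ into the closed subalgebra $\de_\la(A) \otimes \ca(G)$ of $A \otimes \ca_r(G) \otimes \ca(G)$. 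As $\de_\la \colon A \to \de_\la(A)$ is a completely isometric isomorphism of operator algebras, $\de_\la^{-1} \otimes \id_{\ca(G)}$ is a completely isometric isomorphism of $\de_\la(A) \otimes \ca(G)$ onto $A \otimes \ca(G)$, spatial tensor products being functorial for complete isometries. I would then set
\[
\de := (\de_\la^{-1} \otimes \id_{\ca(G)}) \circ \Theta \colon A \longrightarrow A \otimes \ca(G),
\]
a completely isometric homomorphism satisfying $\de(a_g) = \de_\la^{-1}(\de_\la(a_g)) \otimes u_g = a_g \otimes u_g$ for $a_g \in A_g$.

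Finally, because $\sum_g A_g$ is norm-dense in $A$ and $\de(a_g) = a_g \otimes u_g$, the converse criterion recorded after Definition~\ref{D:cis coa} shows that $\de$ is a coaction of $G$ on $A$. Evaluating on the spectral subspaces gives $(\id \otimes l) \de(a_g) = a_g \otimes l_g = \de_\la(a_g)$, whence $(\id \otimes l) \de = \de_\la$ on the dense subspace and so everywhere; this is exactly the required compatibility, and since $\de_\la$ is injective it also shows that $(\id \otimes l)\de$ is injective, i.e.\ that $\de$ is normal. The only genuinely delicate point is the conceptual one already isolated: a reduced coaction cannot be promoted to a full one by any slice or conditional-expectation map, as that would amount to a bounded character-type evaluation on $\ca_r(G)$, which does not exist for non-amenable $G$. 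Fell's absorption is precisely what repairs this, attaching the auxiliary regular factor in $\Phi$ that keeps $\Theta$ completely isometric; the remaining verifications—that $\Theta$ has range in $\de_\la(A) \otimes \ca(G)$ and that $\de_\la^{-1} \otimes \id$ is completely isometric—are routine once the spatial tensor product is used throughout.
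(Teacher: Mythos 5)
Your proof is correct, and it follows essentially the same route as the argument behind the cited result (the paper itself only quotes \cite[Proposition 3.4]{DKKLL} without reproducing its proof): one composes $\de_\la$ with the canonical normal coaction $l_g \mapsto l_g \otimes u_g$ on $\ca_r(G)$ furnished by Fell's absorption, observes that the range lies in $\de_\la(A) \otimes \ca(G) = (\de_\la \otimes \id)(A \otimes \ca(G))$, and pulls back through the complete isometry $\de_\la \otimes \id$ to obtain $\de$, with normality and the identity $\de_\la = (\id \otimes l)\de$ checked on the dense subspace $\sum_{g} A_g$ exactly as you do.
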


Let us close this section with a discussion on gradings of C*-algebras in the sense of \cite{Exe17}.
\begin{definition}
Let $A$ be a C*-algebra and $G$ a discrete group. A collection of closed linear subspaces $\{\B_g\}_{g\in G}$ of $A$ is called a \emph{grading} of $A$ by $G$ if
\begin{enumerate}

\item $\B_g \B_h \subseteq \B_{gh}$

\item $\B_g^* = \B_{g^{-1}}$

\item $\sum_{g\in G} \B_g$ is dense in $A$.

\end{enumerate}
If in addition there is a conditional expectation $E : B \rightarrow \B_e$ which vanishes on $\B_g$ for $g\neq e$, we say that the pair $(\{\B_g\}_{g \in G}, E)$ is a \emph{topological} grading of $A$.
\end{definition}

When $\delta$ is a coaction on a C*-algebra $A$, the spectral subspaces $A_g$ for $g\in G$ comprise a topological grading for $A$ with conditional expectation $E_e = (\id \otimes F_e) \circ \de$ where $F_e : \ca(G) \rightarrow A$ is the $e$-th Fourier coefficient. Completely contractive maps $E_g : B \rightarrow B_g$ can be similarly defined by setting $E_g:= (\id \otimes F_g) \circ \de$, where $F_g : C^*(G) \rightarrow \bC $ is the $g$-th Fourier coefficient.

A grading of a C*-algebra by a group constitutes a Fell bundle over the group, and every Fell bundle arises this way, but not uniquely. Indeed, there may be many non-isomorphic graded C*-algebras whose gradings are all equal to a pre-assigned Fell bundle $\B$. At one extreme sits the maximal C*-algebra $C^*(\B)$, which is universal  for representations of $\B$, while at the other extreme is the minimal (reduced) cross sectional algebra $C^*_r(\B)$ which is defined via the left regular representation  of $\B$.  We refer again to \cite{Exe97, Exe17} for the precise definitions and details.

\section{Fell's absorption principle for semigroup $\ca$-algebras}

If $\H$ is a Hilbert space then $\H^{\otimes n }$ will denote its $n$-fold tensor product. Similarly, if $S$ is a bounded operator on $\H$ (or a representation of a semigroup on $\H$) then $S^{\otimes n}$ will denote the $n$-fold tensor product.

\begin{definition} \label{def;enhanced}
Let $P$ be a semigroup. The \textit{enhanced left regular representation} of $P$ is the representation 
\[
\eL: P\longrightarrow B\left(\mathlarger{\mathlarger{\oplus}}_{n=1}^{\infty} \ell^2(P)^{\otimes n } \right); \,\, P\ni p \longmapsto \loplus_{n=1}^{\infty}  L_p^{\otimes n},
\]
where $ L = \{L_p\}_{p \in P}$ denotes the left regular representation of $P$. (For notational simplicity, we will be writing $\eel$ instead of  $\oplus_{n=1}^{\infty} \ell^2(P)^{\otimes n }$.)
\end{definition}
 
In order to study the enhanced left regular representation in the context of $\ca$-algebras, we need to built a universal $\ca$-algebra with the enhanced left regular representation as a distinguished representation. For that reason, we will show that the algebra $\bTl(P):=\ca(\eL)$, being a reduced type object, admits a normal coaction and then we use Fell bundle theory.  But first we need to establish some notation. 

Let $P$ be a semigroup of a group $G$. For each $ k \in \bN$ we consider the set of words of length $2k$ in $P$,
\[
\W(P)^k:= \{(p_1, p_2, \dots , p_{2k-1}, p_{2k})\mid p_j \in P, \mbox{ for } j=1, 2, \dots, 2k\}
\]
and we let $\W(P):=\cup_{k =0}^{\infty}\W(P)^k$, with the understanding $W(P)^0 = \emptyset$. (When the context makes it clear what $P$ is, we simply write $\W$ instead of $\W(P)$.) With each word $a \in \W^k$ we make the assignment 
\[
a=(p_1, p_2, \dots , p_{2k-1}, p_{2k}) \longmapsto  \dot{a} :=p_1^{-1}p_2, \dots p_{2k-1}^{-1} p_{2k} \in G.
\]
A word $a \in \W$ is said to be neutral if $\dot{a}=e$, the neutral element of $G$. If $V = \{V_p\}_{p\in P}$ is an isometric representation of $P$ and $a=(p_1, p_2, \dots , p_{2k-1}, p_{2k})  \in \W$, then we define
\[
\dot{V}_{a} := V_{p_1}^*V_{p_2}V_{p_3}^*  \cdots V_{p_{2k-1}}^*V_{p_{2k}}.
\]
\begin{proposition}\label{P:f coaction}
Let $P$ be a submonoid of a group $G$. Then there is a normal coaction
\[
\bar{\de}\colon \bTl(P) \longrightarrow \bTl (P)  \otimes \ca(G) ; \eL_p \longmapsto \eL_p \otimes u_{p}.
\]
Moreover each spectral subspace $ \bTl(P)_g$, $g \in G$, of $\bar{\de}$ satisfies
\[
 \bTl(P)_g = \cspan\{ \dot{\eL}_a \mid a \in \W, \dot{a}=g\}.
\]
\end{proposition}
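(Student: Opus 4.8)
The plan is to verify the hypotheses of Proposition~\ref{prop;KKLLco} for the subspaces $A_g := \cspan\{\dot{\eL}_a \mid a \in \W,\ \dot{a} = g\}$. First I would check that $\sum_{g\in G} A_g$ is norm-dense in $\bTl(P)$. Since $P$ is a monoid we have $\eL_e = I$, and any finite product of the generators $\eL_p$ and their adjoints can be brought into the alternating form $\dot{\eL}_a$ by inserting factors $\eL_e = \eL_e^* = I$ between consecutive letters of equal type and, if needed, at the two ends (e.g.\ $\eL_p\eL_q = \dot{\eL}_{(e,p,e,q)}$ and $\eL_p^*\eL_q^* = \dot{\eL}_{(p,e,q,e)}$). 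As $\{\eL_p\}_{p\in P}$ generate $\bTl(P)$, the words $\dot{\eL}_a$ span a dense subalgebra, giving density of $\sum_g A_g$.

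The crux is to construct a completely isometric homomorphism $\de_\la\colon\bTl(P)\to\bTl(P)\otimes\ca_r(G)$ with $\de_\la(x)=x\otimes l_g$ for $x\in A_g$. I would build it spatially. On each summand $\ell^2(P)^{\otimes n}\otimes\ell^2(G)$ of $\eel\otimes\ell^2(G)$ define a unitary $\bar W_n$ by permuting the canonical orthonormal basis via
\[
\bar W_n\big(\delta_{x_1}\otimes\cdots\otimes\delta_{x_n}\otimes\delta_g\big)=\delta_{x_1}\otimes\cdots\otimes\delta_{x_n}\otimes\delta_{x_1 g},
\]
which is a genuine unitary because $g\mapsto x_1 g$ is a bijection of $G$ for each fixed first coordinate $x_1\in P$; put $\bar W:=\loplus_{n=1}^\infty \bar W_n$. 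A direct computation on basis vectors yields $\bar W(\eL_p\otimes 1)\bar W^{*}=\eL_p\otimes l_p$ for all $p\in P$. Consequently $x\mapsto\bar W(x\otimes 1)\bar W^{*}$ is a unitary conjugation, hence a completely isometric $*$-homomorphism, carrying $\bTl(P)\otimes 1$ onto the $\ca$-algebra generated by $\{\eL_p\otimes l_p\}\subseteq\bTl(P)\otimes\ca_r(G)$; reading it as a map out of $\bTl(P)$ gives $\de_\la$. Since $\de_\la$ is multiplicative with $\de_\la(\eL_p)=\eL_p\otimes l_p$ and $\de_\la(\eL_p^{*})=\eL_p^{*}\otimes l_{p^{-1}}$, one computes $\de_\la(\dot{\eL}_a)=\dot{\eL}_a\otimes l_{\dot{a}}$, so $\de_\la(x)=x\otimes l_g$ on $A_g$. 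Proposition~\ref{prop;KKLLco} then supplies the normal coaction $\bar\de$ with $\bar\de(\eL_p)=\eL_p\otimes u_p$; more generally $\bar\de(a)=a\otimes u_g$ for $a\in A_g$, so each $A_g$ sits inside the spectral subspace $\bTl(P)_g$.

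For the \emph{moreover} clause I would pin down the spectral subspaces. The inclusion $A_g\subseteq\bTl(P)_g$ is immediate from $\bar\de(\dot{\eL}_a)=\dot{\eL}_a\otimes u_{\dot{a}}$. For the reverse inclusion I would invoke the topological grading attached to a coaction recalled in Section~\ref{S;cosystem}: the contractive maps $E_g:=(\id\otimes F_g)\circ\bar\de$, with $F_g$ the $g$-th Fourier coefficient, satisfy $E_g(\dot{\eL}_a)=\dot{\eL}_a$ when $\dot{a}=g$ and $E_g(\dot{\eL}_a)=0$ otherwise. Given $x\in\bTl(P)_g$ we have $E_g(x)=x$; approximating $x$ in norm by finite linear combinations of words $\dot{\eL}_a$ and applying the continuous map $E_g$ (whose image on such combinations lies in $A_g$) shows $x\in\overline{A_g}=A_g$.

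I expect the main obstacle to be the middle step: finding the implementing unitary $\bar W$ (the ``take the first coordinate'' assignment $\delta_{x_1}\otimes\cdots\otimes\delta_{x_n}\otimes\delta_g\mapsto\delta_{x_1}\otimes\cdots\otimes\delta_{x_n}\otimes\delta_{x_1 g}$ is the one ingredient that is not forced), and verifying that the resulting conjugation is not merely completely contractive but completely isometric with range inside the spatial tensor product $\bTl(P)\otimes\ca_r(G)$. Once $\de_\la$ is in hand, the density statement and the spectral-subspace identification are routine consequences of the grading machinery.
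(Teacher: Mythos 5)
Your proposal is correct, and its core mechanism is the paper's own: a basis-permuting unitary that spatially implements a reduced coaction into $\bTl(P)\otimes\ca_r(G)$, followed by Proposition~\ref{prop;KKLLco} to promote it to a normal coaction into $\bTl(P)\otimes\ca(G)$. The genuine difference is where the unitary lives. The paper first constructs the standard coaction $\de$ on $\T_\la(P)$ via the unitary $U(\de_p\otimes\de_g)=\de_p\otimes\de_{pg}$ on $\ell^2(P)\otimes\ell^2(G)$, and then lifts it to the enhanced algebra by restricting $\loplus_n(\id^{\otimes (n-1)}\otimes\de)$, a step that requires the identification $\loplus_n\big(\T_\la(P)^{\otimes n}\otimes\ca(G)\big)\simeq\big(\loplus_n\T_\la(P)^{\otimes n}\big)\otimes\ca(G)$ and a check that the restriction lands in $\bTl(P)\otimes\ca(G)$. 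You instead build a single unitary $\bar W$ directly on $\eel\otimes\ell^2(G)$, twisting the $\ell^2(G)$ leg by the \emph{first} tensor coordinate of each level; your computation $\bar W(\eL_p\otimes 1)\bar W^{*}=\eL_p\otimes l_p$ is correct (note that your $\bar W_1$ is exactly the paper's $U$), and it makes the argument self-contained by bypassing the reduce-and-lift step and the direct-sum/tensor interchange entirely. What the paper's two-step route buys is the explicit fact that the enhanced coaction is inherited, level by level, from the standard coaction on $\T_\la(P)$ — a fact it reuses later (e.g., in Theorem~\ref{prop;identify} and Theorem~\ref{thm;ClDeasy}); what your route buys is a shorter, purely spatial proof. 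Finally, you spell out both the density of $\sum_g A_g$ (by padding words with $\eL_e$) and the identification of the spectral subspaces via the maps $E_g=(\id\otimes F_g)\circ\bar\de$, both of which the paper treats as routine; your versions of these arguments are correct.
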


\begin{proof}
It is enough to establish a normal coaction
\[
\de \colon \T_{\la}(P) \longrightarrow \T_\la(P)  \otimes \ca(G) : L_p \longmapsto L_p \otimes u_{p}
\]
so that each spectral space $ \T_\la(P)_g$, $g \in G$,  satisfies
\[
 \T(P)_g = \cspan\{ \dot{L}_a \mid a \in \W, \dot{a}=g\}.
\]
Then the desired coaction is the restriction of
\[
\begin{split}
\loplus_{n=1}^{\infty} (\id^{\otimes n} \otimes \de ) : \loplus_{n=1}^{\infty} \T_{\lambda}(P)^{\otimes n} \longrightarrow 
&\loplus_{n=1}^{\infty} (\T_{\lambda}(P)^{\otimes n} \otimes \ca(G)) \\
& \,\, \simeq
\left( \loplus_{n=1}^{\infty} \T_{\lambda}(P)^{\otimes n} \right) \otimes \ca(G)) 
\end{split}
\]
on $ \hat{\T}_\la(P)$.

The existence of such a coaction $\delta$ on $\T_{\lambda}(P)$ is well-known. 
Indeed, consider the operator $U \colon \ell^2(P) \otimes \ell^2(G) \to  \ell^2(P)  \otimes \ell^2(G)$ be given by
\[
U (\de_p \otimes \de_g) = \de_p \otimes \de_{p g}
\foral
p \in P, g \in G.
\]
Then $U$ is a unitary in $B( \ell^2(P) \otimes \ell^2(G) )$ satisfying $U(L_p \otimes I) = (L_p \otimes l_p) U$ for all $p \in P$.
Thus $\ad_{U}$ implements a normal coaction that promotes to desired coaction $\bar{\de}$ by Proposition~\ref{prop;KKLLco}.
\end{proof}

The existence of the coaction $\de$ implies that other natural algebras associated with the representation $\eL$ also admit coactions whose spectral subspaces have similar descriptions, e.g., 
\[
\ca(\eL\otimes \eL) \subseteq   \hat{\T}_\la(P) \otimes  \hat{\T}_\la(P).
\]
This will be a recurring theme in this paper. We now need the following

\begin{lemma}[cf. Lemma 1.4 of \cite{LR96}] \label{basicLem}
Suppose $F$ is a finite family of mutually commuting projections in a unital $\ca$-algebra and let $\la_X \in \bC$ for each $X \in F$. For every subset $A$ of $F$ define
\[
Q_A:=\prod_{X \in A} X \prod_{X \in F\backslash A}(1-X),
\]
which includes the cases $Q_{\emptyset}=\prod_F(1-X)$ and $Q_F= \prod_FX$.

Then $1 = \sum_{A \subseteq F}Q_A$ is a decomposition of the identity in mutually orthogonal projections,
\[
\sum_{X \in F}\la_X X=\sum_{\emptyset \neq A\subseteq F}\left(\sum_{X\in A} \la_X\right)Q_A
\]
and
\[
\Big\| \sum_{X \in F}\la_X X \Big\| = \max\Big\{ \big| \sum_{X \in F}\la_X \big| \mid \emptyset \neq A\subseteq F , \,\, Q_A\neq 0\Big\}.
\]
\end{lemma}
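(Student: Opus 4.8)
The plan is to treat the nonzero $Q_A$ as the atoms of the finite-dimensional commutative $\ca$-algebra generated by $F$, and to read off all three assertions from this abelian picture. Since the members of $F$ are mutually commuting projections, each $Q_A$ is a product of commuting projections and is therefore itself a projection. First I would record the orthogonality relations: if $A \neq B$, then some $X \in F$ lies in exactly one of $A, B$, so the factors $X$ and $1-X$ both occur in the product $Q_A Q_B$; since everything commutes and $X(1-X)=0$, this forces $Q_A Q_B = 0$. The completeness relation $\sum_{A \subseteq F} Q_A = 1$ then comes from expanding the telescoping product $\prod_{X \in F}\big(X + (1-X)\big) = \prod_{X \in F} 1 = 1$, whose expansion is precisely $\sum_{A \subseteq F} Q_A$ once one uses commutativity to collect the factors $X$ (those indexed by $A$) and $1-X$ (those indexed by $F \setminus A$).

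For the second identity I would use this decomposition to diagonalize each generator. Fixing $X \in F$ and multiplying $X$ by $1 = \sum_A Q_A$, one has $X Q_A = Q_A$ when $X \in A$ (because $X^2 = X$ and $X$ commutes with the remaining factors) and $X Q_A = 0$ when $X \notin A$ (because then $1-X$ is a factor of $Q_A$ and $X(1-X)=0$). Hence $X = \sum_{A \ni X} Q_A$, and substituting into $\sum_{X \in F}\lambda_X X$ and interchanging the two finite sums yields
\[
\sum_{X \in F}\lambda_X X = \sum_{A \subseteq F}\Big(\sum_{X \in A}\lambda_X\Big) Q_A = \sum_{\emptyset \neq A \subseteq F}\Big(\sum_{X \in A}\lambda_X\Big) Q_A,
\]
the last step discarding the $A = \emptyset$ term, whose coefficient is an empty sum and hence $0$.

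For the norm formula I would exploit that the nonzero $Q_A$ form a finite family of mutually orthogonal projections summing to $1$, so the $\ca$-subalgebra they generate is $*$-isomorphic, hence isometric, to $\bC^N$ with $N = \#\{A : Q_A \neq 0\}$, via $\sum_A c_A Q_A \mapsto (c_A)_{Q_A \neq 0}$. Writing $c_A := \sum_{X \in A}\lambda_X$, the element $\sum_{X \in F}\lambda_X X$ corresponds under this isomorphism to the tuple $(c_A)$, whose sup-norm is $\max\{|c_A| : Q_A \neq 0\}$; since $c_\emptyset = 0$ never affects the maximum, this equals $\max\big\{ \big|\sum_{X \in A}\lambda_X\big| : \emptyset \neq A \subseteq F,\ Q_A \neq 0\big\}$, as claimed. (Alternatively, one can bypass Gelfand theory entirely: the element is normal with $T^*T = \sum_A |c_A|^2 Q_A$ by orthogonality, so $\|T\|^2 = \|T^*T\|$ can be extracted inductively.)

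I do not anticipate a genuine obstacle: every step is a routine consequence of commutativity and the projection relations. The only point requiring a little care is the isometric identification underlying the norm computation, namely the justification that a linear combination of mutually orthogonal projections summing to the identity has norm equal to the largest modulus among the coefficients attached to the \emph{nonzero} projections.
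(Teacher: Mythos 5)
Your proof is correct. Note that the paper itself supplies no proof of this lemma---it simply defers to Lemma 1.4 of \cite{LR96}---and your argument (mutual orthogonality of the $Q_A$, the decomposition $1=\prod_{X\in F}\bigl(X+(1-X)\bigr)=\sum_{A\subseteq F}Q_A$, the diagonalization $X=\sum_{A\ni X}Q_A$, and the isometric identification of the span of the nonzero $Q_A$ with $\bC^N$ to get the norm formula) is essentially the same standard argument used there.
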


\begin{proposition} \label{doubling}
Let $P$ be a submonoid of a group $G$. The mapping 
\begin{equation} \label{n=2}
\eL_p=\loplus_{n=1}^{\infty} L_p^{\otimes n} \longmapsto \eL_p\otimes L_p = \loplus_{n=2}^{\infty} L_p^{\otimes n}, \,\, p \in P,
\end{equation}
extends to an isometric $*$-representation of $\bTl(P)$.
\end{proposition}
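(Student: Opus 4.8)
The plan is to realize the assignment (\ref{n=2}) as the compression of $\bTl(P)$ to a reducing subspace, which makes the existence of the $*$-representation automatic and reduces the whole statement to a norm comparison between consecutive tensor levels. Put $\K:=\loplus_{n=2}^{\infty}\ell^2(P)^{\otimes n}\subseteq \eel$. Every $\eL_p=\loplus_{n\ge1}L_p^{\otimes n}$ and every $\eL_p^{*}$ preserves each summand, so $\K$ reduces $\ca(\eL)=\bTl(P)$, and compression to $\K$ is a $*$-representation $\Phi\colon \bTl(P)\to B(\K)$ sending $\eL_p$ to $\eL_p|_\K=\loplus_{n\ge2}L_p^{\otimes n}=\eL_p\otimes L_p$. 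Thus (\ref{n=2}) does extend to a $*$-representation, and only isometry remains to be shown.

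For the isometry I would work on the dense $*$-subalgebra $\spn\{\dot{\eL}_a\mid a\in\W\}$ furnished by Proposition~\ref{P:f coaction}. A typical element is a finite sum $x=\sum_i c_i\,\dot{\eL}_{a_i}$, and since $\dot{\eL}_a=\loplus_{n\ge1}(\dot{L}_a)^{\otimes n}$ it is block diagonal, $x=\loplus_{n\ge1}x_n$ with $x_n:=\sum_i c_i(\dot L_{a_i})^{\otimes n}$. Hence $\|x\|=\sup_{n\ge1}\|x_n\|$ while $\|\Phi(x)\|=\sup_{n\ge2}\|x_n\|$, so it suffices to prove $\|x_1\|\le\|x_2\|$.

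The device that delivers this is the diagonal isometry $W\colon \ell^2(P)\to\ell^2(P)\otimes\ell^2(P)$, $\de_q\mapsto\de_q\otimes\de_q$. The key step is the intertwining identity $W\,\dot L_a=(\dot L_a\otimes\dot L_a)\,W$ for every $a\in\W$, checked on basis vectors: one has $\dot L_a\de_q=\chi_a(q)\,\de_{\dot a\, q}$ for a $\{0,1\}$-valued ``path indicator'' $\chi_a(q)$ recording whether every intermediate product stays inside $P$, whence both sides applied to $\de_q$ equal $\chi_a(q)\,\de_{\dot a\, q}\otimes\de_{\dot a\, q}$, using the idempotence $\chi_a(q)^{2}=\chi_a(q)$. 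Summing over $i$ gives $Wx_1=x_2W$, and since $W$ is isometric, $\|x_1\|=\|Wx_1\|=\|x_2W\|\le\|x_2\|$. Therefore $\sup_{n\ge1}\|x_n\|=\sup_{n\ge2}\|x_n\|$, i.e. $\|x\|=\|\Phi(x)\|$ on a dense subalgebra, and $\Phi$ is isometric.

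I expect the only genuinely delicate point to be the verification of the intertwining identity, more precisely the observation that the scalar $\chi_a(q)$ attached to a word is projection-valued, which is exactly what forces the level-one and level-two blocks to be carried into one another by $W$. Should one prefer to avoid this computation, the same conclusion is reachable through the grading machinery: $\bTl(P)$ and $\ca(\eL\otimes L)$ both carry normal coactions of $G$ as in Proposition~\ref{P:f coaction}, with faithful conditional expectations onto their neutral fibres, $\Phi$ is equivariant, so isometry reduces to injectivity on the neutral fibre; there the elements are finite combinations of the commuting diagonal projections $\dot{\eL}_a$ with $\dot a=e$, and Lemma~\ref{basicLem} computes the relevant norms, the point again being that $v\mapsto v^{\otimes n}$ shows every nonzero corner $Q_A$ present at level one persists at level two.
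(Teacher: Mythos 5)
Your proof is correct, and it takes a genuinely different --- and more elementary --- route than the paper's. The paper treats well-definedness as elementary and then runs the full Fell-bundle machine: it equips $\ca(\rho)$ with a coaction, proves isometry only on the unit fiber $\bTl(P)_e$ by combining Lemma~\ref{basicLem} with the projection inequality $(1-\dot{L}_b)^{\otimes 2}\leq 1-\dot{L}_b^{\otimes 2}$, and finally invokes the faithful conditional expectations together with \cite[Proposition 3.7]{Exe97} to promote the fiberwise isometry to a global one. You avoid all of this: compression to the reducing subspace $\K$ gives the $*$-representation for free, and the diagonal isometry $W\de_q=\de_q\otimes\de_q$, which intertwines $\dot{L}_a$ with $\dot{L}_a\otimes\dot{L}_a$ precisely because the matrix coefficients $\chi_a(q)$ are $\{0,1\}$-valued (hence idempotent), yields $\|x_1\|\leq\|x_2\|$ for \emph{arbitrary} finite linear combinations of monomials, neutral or not, so no grading or expectation argument is needed to pass from a fiber to the whole algebra; density of the span of the $\dot{\eL}_a$ and continuity of $*$-homomorphisms finish the proof. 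Your intertwining identity is verified correctly, and it is closely related in spirit to the isometry $W$ the paper later uses in the proof of Theorem~\ref{thm;main3}. What the paper's heavier approach buys is a reusable template: in Theorem~\ref{thm;main1} and Lemma~\ref{lem;req2} the second tensor factor is $\pi(\dot{\bt}_a)$ for an abstract representation $\pi$ of $\bTu(P)$, where there is no canonical orthonormal basis and hence no diagonal intertwiner, so the unit-fiber-plus-bundle-rigidity argument is the one that generalizes, while your $W$-trick is special to the self-absorption situation treated here.
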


\begin{proof}
It is elementary to verify that (\ref{n=2}) extends to a well-defined $*$-representation $\rho$ of $\bTl$. Now arguments similar to that of Lemma~\ref{P:f coaction} show that $\ca(\rho)$ admits a coaction $\overline{\de}_{\rho}$, whose spectral subspaces $\ca(\rho)_g$, $g \in G$, have a similar description to that of $\overline{\de}$. Hence both $\bTl(P)$ and $\ca(\rho)$ become graded $\ca$-algebras and $\rho$ induces a surjective bundle homomorphism between the associated Fell bundles. 

We show now that $\rho$ is isometric on $\bTl(P)_e$.
Consider a non-zero element of $\bTl(P)$ of the form 
\begin{equation} \label{specialsum}
\alpha := \sum_F \la_{a}\dot{\eL}_{a},
\end{equation}
where $F$ is a finite collection of neutral words. We claim that $\|\rho (\alpha) \|= \|\alpha\|$. By way of contradiction assume otherwise. Then the second half of Lemma~\ref{basicLem} implies the existence of a non-empty $A \subseteq F$ so that the projection
\[
Q_A:=\prod_{a \in A} \dot{\eL}_a\prod_{b \in F\backslash A}(1 -\dot{\eL}_b)
\]
satisfies $\|Q_A\alpha\|=\|\alpha\| $ and yet $\rho(Q_A) = 0$. However for the non-zero projection $Q_A$ to satisfy $\rho(Q_A) = 0$, we must have that
\[
\prod_{a \in A} \dot{L}_a\prod_{b \in F\backslash A}(1 -\dot{L}_b) \neq 0
\]
while all other direct summands satisfy
\[
\prod_{a \in A} \dot{L}^{\otimes n}_a\prod_{b \in F\backslash A}(1 -\dot{L}^{\otimes n}_b) = 0, \,\, n=2, 3, \dots.
\]
However 
\[
(1-\dot{L}_b )^{\otimes 2} \leq 1 - \dot{L}_b^{\otimes 2} \,\, \mbox{ for all } b \in F\backslash A,
\]
and so 
\[
\Big( \prod_{b \in F\backslash A} (1-\dot{L}_b )\Big)^{\otimes 2} \leq \prod_{b \in F\backslash A} (1 - \dot{L}_b^{\otimes 2}).
\]
But then,
\begin{equation}
\begin{split}
0\neq \Big( \prod_{a \in A} \dot{L}_a\prod_{b \in F\backslash A} (1-\dot{L}_b )\Big)^{\otimes 2} &=
 \Big(\prod_{a \in A} \dot{L}_a\Big)^{\otimes 2} \Big(\prod_{b \in F\backslash A} (1-\dot{L}_b )\Big)^{\otimes 2} \\
 &\leq \prod_{a \in A} \dot{L}^{\otimes 2}_a\prod_{b \in F\backslash A}(1 -\dot{L}^{\otimes 2}_b) \\
 &\leq \rho(Q_A).
\end{split}
\end{equation}
This contradiction shows that $\rho$ is isometric on sums of the form (\ref{specialsum}) and therefore on the generated subspace of $\bTl(P)$, i.e., $\bTl(P)_e$.

From the above, it follows that $\rho$ induces a bundle isomorphism between $\{ \bTl(P)_g\}_{g \in G}$ and $\{\ca(\rho) \}_{g \in G}$ and so $\rho$ induces a $*$-isomorphism between the reduced cross sectional $\ca$-algebras $\ca_{r}(\{ \bTl(P)_g\}_{g \in G})$ and $\ca_r(\{\ca(\rho)_g \}_{g \in G})$ which coincides with $\rho$ on fibers. However, both $\bTl(P)$ and $\ca(\rho)$ admit \textit{faithful} expectations that act as the identity on the unit fibers and annihilate all other fibers. Hence by \cite[Proposition 3.7]{Exe97} we have 
\[
\ca_{r} (\{\bTl(P)_g\}_{g \in G}) \simeq \bTl(P) \mbox{ and } \ca_r(\{\ca(\rho)_g \}_{g \in G}) \simeq \ca(\rho)
\]
canonically and the proof of the proposition is complete.
\end{proof}

Unlike the left regular representation, the enhanced left regular representation of a semigroup is always self-absorbing. Indeed

\begin{corollary} \label{C:firststep}
Let $P$ be a submonoid of a group $G$. The mapping 
\begin{equation*}
\bTl(P) \ni \eL_p \longmapsto \eL_p  \otimes \eL_p \in \ca(\eL\otimes \eL)
\end{equation*}
extends to a $*$-isomorphism between the ambient $\ca$-algebras.
\end{corollary}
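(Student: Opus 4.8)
The plan is to reduce the statement to Proposition~\ref{doubling} by observing that $\eL \otimes \eL$ and $\eL \otimes L$ are assembled from exactly the same ``building block'' representations, differing only in the multiplicities with which those blocks occur, and that multiplicities are invisible to the generated $\ca$-algebra.

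First I would reorganize the two relevant representations by total tensor degree. Since $\eel = \loplus_{n=1}^{\infty} \ell^2(P)^{\otimes n}$ and $\eL_p$ is block diagonal, each summand $\ell^2(P)^{\otimes k}$ is reducing for $\bTl(P)$, so compression yields a $*$-homomorphism $\pi_k \colon \bTl(P) \to B(\ell^2(P)^{\otimes k})$ with $\pi_k(\eL_p)=L_p^{\otimes k}$. Grouping the summands of $\eL_p \otimes L_p$ and of $\eL_p \otimes \eL_p$ according to the total number of tensor legs gives, up to the evident unitary reshuffling of the Hilbert space, the identifications
\[
\eL_p \otimes L_p \; \simeq \; \loplus_{k=2}^{\infty} L_p^{\otimes k},
\qquad
\eL_p \otimes \eL_p \; \simeq \; \loplus_{k=2}^{\infty} \big(L_p^{\otimes k}\big)^{\oplus (k-1)},
\]
the factor $k-1$ counting the pairs $(n,m)$ with $n,m\ge 1$ and $n+m=k$. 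In the language of the blocks $\pi_k$, the map $\rho$ of Proposition~\ref{doubling} is $\rho = \loplus_{k\ge 2}\pi_k$, while the map $\sigma \colon \eL_p \mapsto \eL_p \otimes \eL_p$ is $\sigma = \loplus_{k\ge 2}\pi_k^{\oplus(k-1)}$. In particular $\sigma$ is automatically a $*$-homomorphism, being a direct sum of (amplified) compressions, which disposes of any well-definedness concern.

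Next I would compare the two norms. For any $x \in \bTl(P)$ the direct-sum formula gives $\|\rho(x)\| = \sup_{k\ge 2}\|\pi_k(x)\|$, while $\|\sigma(x)\| = \sup_{k\ge 2}\|\pi_k(x)^{\oplus(k-1)}\|$. Since amplification does not change the operator norm, $\|\pi_k(x)^{\oplus(k-1)}\| = \|\pi_k(x)\|$ for every $k$, and hence $\|\sigma(x)\| = \|\rho(x)\|$ for all $x$. Proposition~\ref{doubling} asserts precisely that $\rho$ is isometric, so $\sigma$ is isometric as well. Consequently $\sigma$ is a $*$-isomorphism of $\bTl(P)$ onto its range, and that range is by construction the $\ca$-algebra generated by the $\eL_p \otimes \eL_p$, namely $\ca(\eL \otimes \eL)$; this is exactly the assertion that $\eL_p \mapsto \eL_p \otimes \eL_p$ extends to a $*$-isomorphism of the ambient $\ca$-algebras.

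I do not expect a genuine obstacle here, which is why the statement is phrased as a corollary. The only point requiring care is the bookkeeping behind the degree-decomposition, namely verifying that the unitary reindexing by total tensor degree really intertwines $\sigma$ with $\loplus_{k\ge 2}\pi_k^{\oplus(k-1)}$, so that $\sigma$ is a multiplicity amplification of $\rho$ rather than an essentially different representation. Once that identification is in place, the norm comparison and the appeal to Proposition~\ref{doubling} are immediate.
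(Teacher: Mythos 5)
Your proof is correct and is essentially the paper's own argument: the paper likewise reduces the corollary to Proposition~\ref{doubling}, using the fact that $\eL_p\otimes\eL_p$ and $\eL_p\otimes L_p=\loplus_{n=2}^{\infty}L_p^{\otimes n}$ differ only by multiplicities of the blocks $L_p^{\otimes k}$, which the generated $\ca$-algebras cannot see. Your write-up simply makes explicit the multiplicity bookkeeping that the paper's proof dismisses as ``easy to see.''
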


\begin{proof}
It is easy to see that the mapping
\[
\eL_p\otimes \eL_p \longmapsto \loplus_{n=2}^{\infty} L_p^{\otimes n}, \,\, p \in P,
\]
extends to an isometric representation of $\ca(\eL\otimes \eL)$. In the previous Proposition we established that 
\[
\eL_p=\loplus_{n=1}^{\infty} L_p^{\otimes n} \longmapsto \loplus_{n=2}^{\infty} L_p^{\otimes n}, \,\, p \in P,
\]
extends to an isometric $*$-representation of $\bTl(P)$ and the conclusion follows.
\end{proof}

In order to obtain our isometric Fell's absorption principle, we now need to identify a suitable analogue of Nica-covariance that applies to isometric representations of arbitrary semigroups.

It is known \cite[Proposition 4.3]{DKKLL} that the Nica-covariant representations of a right LCM semigroup $P$ are exactly the isometric representations of $P$ which extend to $*$-representations of the full cross-sectional $\ca$-algebra of the Fell bundle $\{ \T_{\la}(P)_g\}_{g \in G}$ arising from the coaction $\de$ appearing in the proof of Proposition~\ref{P:f coaction}. Following this lead, let $$\bTu (P):= \ca(\{\bTl(P)_g\}_{g \in G})$$ be the full cross sectional $\ca$-algebra of the Fell bundle $\{\bTl(P)_g\}_{g \in G}$. Let $\bt$ be the natural $*$-embedding of $\{\bTl(P)_g\}_{g \in G}$ in $\bTu (P)$ and let $\bl: \bTu(P) \rightarrow \bTl(P)$ be the $*$-homomorphism that makes the following diagram 
\[
\xymatrix{
& & \bTu(P)\ar@{>}[d]^{\bl} \\
\{\bTl(P)_g\}_{g \in G} \ar[rru]^{\bt} \ar@{^{(}->}[rr]& & \bTl(P)}
\]
commutative. For ease of notation, we write $\bar{t}_p$ instead of $\bar{t}(\eL_p)$, $ p \in P$. Observe that since $\bTu(P)$ and $\bTl (P)$ are cross sectional $\ca$-algebra of the same bundle, $\bl: \bTu(P) \rightarrow \bTl(P)$ is a $*$-isomorphism on the unit fiber $\bTu (P)_e$.

\begin{lemma} \label{lem;req2}
Let $P$ be a submonoid of a group $G$. Then the map
\begin{equation} \label{eq;step1}
\ca(\eL\otimes \eL) \ni \eL_p\otimes \eL_p \longmapsto \eL_p\otimes \bar{t}_p \in \ca(\eL\otimes \bt ), \, \,\, p \in P,
\end{equation}
extends to a  $*$-isomorphism between the generated $\ca$-algebras.
\end{lemma}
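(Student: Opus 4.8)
The plan is to prove the lemma by constructing the \emph{inverse} of the asserted map and showing that it is a $*$-isomorphism. Consider the $*$-homomorphism $\id\otimes\bl$ on $\bTl(P)\otimes\bTu(P)$ (minimal tensor product) and restrict it to the subalgebra $\ca(\eL\otimes\bt)$. Since $\bl(\bt_p)=\eL_p$ for every $p\in P$, this restriction sends $\eL_p\otimes\bt_p\mapsto\eL_p\otimes\eL_p$ and is therefore a surjection onto $\ca(\eL\otimes\eL)$. If I can show it is also injective, then its inverse is precisely the map~(\ref{eq;step1}); in particular the well-definedness of~(\ref{eq;step1}) comes for free, and the lemma follows.

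For injectivity I would invoke the grading machinery, exactly as in the proof of Proposition~\ref{doubling}. Following the recurring theme after Proposition~\ref{P:f coaction}, both $\ca(\eL\otimes\eL)$ and $\ca(\eL\otimes\bt)$ carry coactions of $G$; writing $\epsilon$ for the one on $\ca(\eL\otimes\bt)$, it is determined on generators by $\epsilon(\eL_p\otimes\bt_p)=(\eL_p\otimes\bt_p)\otimes u_p$, with spectral subspaces $\ca(\eL\otimes\bt)_g=\cspan\{\dot{\eL}_a\otimes\etd_a\mid\dot{a}=g\}$. The map $\id\otimes\bl$ carries the $g$-fibre into the $g$-fibre, so it is equivariant and intertwines the canonical conditional expectations onto the unit fibres.

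Two points then remain, the second being the main obstacle. First, $\id\otimes\bl$ is injective on the unit fibre $\ca(\eL\otimes\bt)_e$: for a neutral word $a$ one has $\etd_a\in\bTu(P)_e$, so $\ca(\eL\otimes\bt)_e$ lies inside $\bTl(P)\otimes\bTu(P)_e$, and since $\bl$ restricts to a $*$-isomorphism of $\bTu(P)_e$ onto $\bTl(P)_e$ and tensoring an injective $*$-homomorphism by $\id$ preserves injectivity for the minimal tensor product, $\id\otimes\bl$ is isometric there. Second, I must check that $\ca(\eL\otimes\bt)$ admits a \emph{faithful} expectation onto $\ca(\eL\otimes\bt)_e$, equivalently that $\epsilon$ is normal. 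The key observation is that $\epsilon$ is nothing but the restriction to $\ca(\eL\otimes\bt)$ of the first-leg coaction $\bar{\de}\otimes\id$ on $\bTl(P)\otimes\bTu(P)$; hence $(\id\otimes l)\epsilon$ is the restriction of $\bar{\de}_{\la}\otimes\id$, where $\bar{\de}_{\la}=(\id\otimes l)\bar{\de}$ is the reduced coaction on $\bTl(P)$. Since $\bar{\de}$ is normal by Proposition~\ref{P:f coaction}, $\bar{\de}_{\la}$ is injective, and injectivity is again preserved under tensoring by $\id$ on the minimal tensor product; thus $(\id\otimes l)\epsilon$ is injective and $\epsilon$ is normal. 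By the correspondence between coactions and gradings \cite{Exe97}, $\ca(\eL\otimes\bt)$ then coincides with the reduced cross sectional algebra of its Fell bundle and carries a faithful expectation onto its unit fibre.

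It remains to assemble these pieces via the standard fact that a grading-preserving $*$-homomorphism $\phi$ between topologically graded C*-algebras which intertwines the faithful expectations $E_A,E_B$ onto the unit fibres and is injective on the unit fibre is injective: if $\phi(x)=0$ then $\phi(E_A(x^*x))=E_B(\phi(x^*x))=0$, whence $E_A(x^*x)=0$ by injectivity on the unit fibre and $x=0$ by faithfulness of $E_A$. Here $\ca(\eL\otimes\eL)\simeq\bTl(P)$ (Corollary~\ref{C:firststep}), which carries a faithful expectation by Proposition~\ref{doubling}, while $\ca(\eL\otimes\bt)$ does by the normality established above. Consequently $\id\otimes\bl$ is an injective surjection, hence a $*$-isomorphism, and its inverse is the map~(\ref{eq;step1}). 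The delicate step throughout is the normality of $\epsilon$; once it is seen to descend from the normal coaction on the reduced first leg, everything else is formal.
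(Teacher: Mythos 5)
Your proposal is correct and follows essentially the same route as the paper's proof: both invert the map via $\id\otimes\bl$ restricted to $\ca(\eL\otimes\bt)$, exploit the $G$-gradings of the two algebras, use that $\bl$ is a $*$-isomorphism on the unit fiber $\bTu(P)_e$ (together with preservation of injectivity under minimal tensoring) to get injectivity on the unit fiber, and then conclude global injectivity from a faithful conditional expectation onto the unit fiber. If anything, your verification that the expectation on $\ca(\eL\otimes\bt)$ is faithful --- via normality of the restricted first-leg coaction, inherited from the normal coaction $\bar{\de}$ of Proposition~\ref{P:f coaction} --- supplies a detail that the paper asserts without proof.
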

\begin{proof}
Arguing as in the proof of Proposition \ref{P:f coaction}, one can see that the $\ca$-algebras $\ca(\eL\otimes \eL) $ and $\ca(\eL\otimes \bt)$ admit coactions whose spectral subspaces $\{ \ca(\eL\otimes \eL)_g\}_{g \in G} $ and $\ca(\eL\otimes \bt)_g \}_{g \in G}$ respectively satisfy
\[
\ca(\eL\otimes \eL)_g =  \cspan\{ \dot{\eL}_a \otimes  \dot{\eL}_a \mid a \in \W, \dot{a}=g\}
\]
and 
\[
\ca(\eL\otimes \bt)_g =  \cspan\{ \dot{\eL}_a   \otimes \dot{\bt}_a \mid a \in \W, \dot{a}=g\},
\]
for all $g \in G$. Furthermore both $\ca(\eL\otimes \eL) $ and $\ca(\eL\otimes \bt)$ admit faithful expectations that act as the identity map on the zero fibers $\ca(\eL\otimes \eL)_e$ and $ \ca(\eL\otimes \eL)_e$ and annihilate all other spectral subspaces. Hence 
\[
\ca(\eL\otimes \eL) \simeq\ca_r\left(\{\ca(\eL\otimes \eL)_g\}_{g \in G}\right) \mbox{ and } 
\ca(\eL\otimes \bt) \simeq\ca_r\left(\{\ca(\eL\otimes \bt)_g\}_{g \in G}\right) \]
Now the map
\[
\id \otimes \bl: \bTl(P)\otimes \bTu(P) \longrightarrow \bTl(P)\otimes \bTl(P)
\]
establishes a bundle homomorphism from the Fell bundle $\{ \ca(\eL\otimes \bt)_g \}_{g \in G}$ onto $\{ \ca(\eL\otimes \eL)_g\}_{g \in G} $. However the restriction of $\id \otimes \bl$ on $ \bTl(P)_e\otimes \bTu(P)_e$ is injective because ${\bl}_{\mid_{ \bTu(P)_e}} $ is injective. Since 
\[
\ca(\eL\otimes \bt)_e \subseteq \bTl(P)\otimes \bTu(P) 
\]
the bundle homomorphism from the Fell bundle $\{ \ca(\eL\otimes \bt)_g \}_{g \in G}$ onto $\{ \ca(\eL\otimes \eL)_g\}_{g \in G}$ is actually an isomorphism, which promotes to a $*$-isomorphism between their reduced cross-sectional $\ca$-algebras. This completes the proof.
\end{proof}
The Nica-covariant representations of a right LCM semigroup are \textit{faithful on monomials}, i.e., if $\{V_p\}_{p \in P}$ is a Nica-covariant representation of a right LCM semigroup $P$ and 
\[
L_{p_1}^*L_{p_2}L_{p_3}^*  \cdots L_{p_{2k-1}}^*L_{p_{2k}} \neq 0 \mbox{ for some } p_1, p_2, \dots p_{2k} \in P,
\]
then 
\[
V_{p_1}^*V_{p_2}V_{p_3}^*  \cdots V_{p_{2k-1}}^*V_{p_{2k}} \neq 0.
\]
This property follows directly from the definition of a right LCM group but one can also deduce it from Proposition~\ref{P:P coa B}. Indeed, if $\{V_p\}_{p \in P}$ is a Nica-covariant representation of a right LCM semigroup $P$, then the injectivity of the $*$-homomorphism coming from (\ref{eq: LCMmap}) implies that $\{V_p\}_{p \in P}$ is necessarily faithful on monomials because the left regular representation is. We now observe that this property also holds for our analogue of Nica-covariant representations, which appear in (\ref{eq;Lidef}) below.

\begin{lemma} \label{lem;faithmon}
Let $P$ be a submonoid of a group $G$ and let $\pi: \bTu (P)\rightarrow B(\H)$ be a $*$-representation. Then the semigroup representation 
\begin{equation} \label{eq;Lidef}
P\ni p \longmapsto \pi (\bt_p) \in B(\H)
\end{equation}
is faithful on monomials.
\end{lemma}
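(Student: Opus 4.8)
The plan is to deduce the statement from the self-absorbing isomorphism of Corollary~\ref{C:firststep} and Lemma~\ref{lem;req2}, by building a single injective representation of $\bTl(P)$ that records all the monomials of $V_p := \pi(\bt_p)$.

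First I would record the reduction. Since the $n=1$ summand of the enhanced representation is the left regular representation, $\dot{\eL}_a = \bigoplus_{n\geq 1}(\dot{L}_a)^{\otimes n}$ for every $a \in \W$, so $\dot{L}_a \neq 0$ if and only if $\dot{\eL}_a \neq 0$; moreover $\pi(\dot{\bt}_a) = \dot{V}_a$. Thus it suffices to construct an \emph{injective} $*$-homomorphism $\bar{\pi} \colon \bTl(P) \to B(\eel \otimes \H)$ with $\bar{\pi}(\eL_p) = \eL_p \otimes V_p$: granting injectivity, $\dot{L}_a \neq 0$ gives $\dot{\eL}_a \neq 0$, hence $\bar{\pi}(\dot{\eL}_a) = \dot{\eL}_a \otimes \dot{V}_a \neq 0$, which forces $\dot{V}_a = \pi(\dot{\bt}_a) \neq 0$.

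The map $\bar{\pi}$ is easy to assemble: by Corollary~\ref{C:firststep} the assignment $\eL_p \mapsto \eL_p \otimes \eL_p$ is a $*$-isomorphism of $\bTl(P)$ onto $\ca(\eL \otimes \eL)$, by Lemma~\ref{lem;req2} the assignment $\eL_p \otimes \eL_p \mapsto \eL_p \otimes \bt_p$ is a $*$-isomorphism onto $\ca(\eL \otimes \bt) \subseteq \bTl(P) \otimes \bTu(P)$, and composing these with the restriction of $\id \otimes \pi$ produces a $*$-homomorphism $\bar{\pi}$ with $\bar{\pi}(\eL_p) = \eL_p \otimes V_p$ and $\bar{\pi}(\dot{\eL}_a) = \dot{\eL}_a \otimes \dot{V}_a$. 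To see that $\bar{\pi}$ is injective I would argue as in the last paragraph of the proof of Proposition~\ref{doubling}: reasoning as in Proposition~\ref{P:f coaction}, both $\bTl(P)$ and $\ca(\eL \otimes V)$ carry normal coactions of $G$ with faithful conditional expectations onto their unit fibers, and $\bar{\pi}$ is equivariant; hence $\bar{\pi}$ is injective as soon as it is isometric on the unit fiber $\bTl(P)_e$.

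The isometry on the unit fiber is where the real work lies, and I expect it to be the main obstacle. For a finite sum $\alpha = \sum_F \la_a \dot{\eL}_a$ of mutually commuting neutral-word projections, Lemma~\ref{basicLem} expresses $\|\alpha\|$ and $\|\bar{\pi}(\alpha)\|$ through the nonvanishing of $Q_A = \prod_{a\in A}\dot{\eL}_a \prod_{b\in F\setminus A}(1-\dot{\eL}_b)$ and of $Q_A^{V} = \prod_{a\in A}(\dot{\eL}_a\otimes\dot{V}_a)\prod_{b\in F\setminus A}(1-\dot{\eL}_b\otimes\dot{V}_b)$. Since $\bar{\pi}$ is a $*$-homomorphism the estimate $\|\bar{\pi}(\alpha)\|\leq\|\alpha\|$ is automatic, so only the reverse is at issue. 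Using $1 - \dot{\eL}_b\otimes\dot{V}_b = (1-\dot{\eL}_b)\otimes 1 + \dot{\eL}_b\otimes(1-\dot{V}_b) \geq (1-\dot{\eL}_b)\otimes 1$, the computation of Proposition~\ref{doubling} gives $Q_A^{V} \geq Q_A \otimes \prod_{a\in A}\dot{V}_a$, so everything reduces to the nonvanishing implication $Q_A \neq 0 \Rightarrow \prod_{a\in A}\dot{V}_a \neq 0$. This is exactly the point where non-degeneracy of $\pi$ must be used, and I would resolve it as follows. The projection $\prod_{a\in A}\dot{\eL}_a$ is, on each tensor level, the range projection of a right ideal $X \subseteq P$, and $Q_A \neq 0$ forces $X \neq \emptyset$; choosing $q \in X$ we have $qP \subseteq X$, whence $\eL_q\eL_q^* \leq \prod_{a\in A}\dot{\eL}_a$ and therefore $\bt_q\bt_q^* \leq \prod_{a\in A}\dot{\bt}_a$. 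Applying $\pi$ and using that $\pi(\bt_q)$ is an isometry (so $\pi(\bt_q\bt_q^*)$ is a nonzero projection) yields $\prod_{a\in A}\dot{V}_a = \pi\!\big(\prod_{a\in A}\dot{\bt}_a\big) \geq \pi(\bt_q\bt_q^*) \neq 0$. Hence $\|\bar{\pi}(\alpha)\| = \|\alpha\|$, $\bar{\pi}$ is isometric on the unit fiber and therefore faithful, and the reduction in the second paragraph completes the proof.
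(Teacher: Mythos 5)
Your proposal is correct, but it is a long detour whose essential content is exactly the paper's short direct proof. The paper proves the lemma in a few lines: given a neutral word $a$ with $\dot{\eL}_a \neq 0$, pick $q \in P$ with $\de_q \in \dot{L}_a(\ell^2(P))$, observe $\dot{L}_a\dot{L}_q\dot{L}_q^* = \dot{L}_q\dot{L}_q^*$, lift this identity through the unit-fiber isomorphism $\bl$ to get $\dot{\bt}_a\dot{\bt}_q\dot{\bt}_q^* = \dot{\bt}_q\dot{\bt}_q^*$, apply $\pi$, and use that $\pi(\dot{\bt}_q)$ is an isometry. Your final paragraph --- choosing $q$ in the nonempty right ideal $X$, dominating $\bt_q\bt_q^*$ by $\prod_{a\in A}\dot{\bt}_a$ via the unit-fiber isomorphism, applying $\pi$, and invoking the isometry $\pi(\bt_q)$ --- is precisely this argument, run for a finite family $A$ of neutral words instead of a single one. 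Taking $A=\{a\}$ a singleton there proves the lemma outright, with no need for anything that precedes it. Everything else in your proposal (assembling $\bar{\pi}$ from Corollary~\ref{C:firststep} and Lemma~\ref{lem;req2}, the coaction/Fell-bundle reduction of injectivity to isometry on the unit fiber, Lemma~\ref{basicLem}) amounts to re-proving Theorem~\ref{thm;main1}, which in the paper comes \emph{after} this lemma and cites it at exactly the point where you insert your range-projection argument. So your route reverses the paper's logical order: rather than proving the lemma as an ingredient for the absorption theorem, you rebuild the absorption theorem to deduce the lemma, and in doing so you must (and do, correctly) supply the lemma's own proof as a subroutine. Nothing is circular, since you re-derive rather than cite Theorem~\ref{thm;main1}, but the outer machinery buys you nothing; it only obscures that the kernel of your argument is already a complete proof.
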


\begin{proof}
We need to show that if $a\in W$ is a neutral word with $\dot{\eL}_a\neq 0$, then $\pi (\dot{\bt}_a) \neq 0$.
Let $\{\de_p\}_{p \in P}$ be the canonical basis of $\ell^2(P)$. Since $\dot{\eL}_a\neq 0$, we have that $\dot{L}_a\neq 0$ and so there exists $q \in P$ with $\de_q \in \dot{L}_a(\ell^2(P))$. Therefore
\[
\dot{L}_q\dot{L}_q^*(\ell^2(P)) = \cspan\{\de_{qp} \mid p \in P\} \subseteq \dot{L}_a(\ell^2(P)) 
\]
and so $ \dot{L}_a\dot{L}_q\dot{L}_q^*= \dot{L}_q\dot{L}_q^*$. This in turn implies $ \dot{\eL}_a\dot{\eL}_q\dot{\eL}_q^*= \dot{\eL}_q\dot{\eL}_q^*$ and since $\bl: \bTu(P) \rightarrow \bTl(P)$ is a $*$-isomorphism on the unit fiber $\bTu (P)_e$, we have $\dot{\bt}_a \dot{\bt}_q  \dot{\bt}_q^* = \dot{\bt}_q  \dot{\bt}_q^*$.
Apply now $\pi$ to obtain 
\begin{equation} \label{eq;faithmon}
\pi(\dot{\bt}_a) \pi( \dot{\bt}_q) \pi( \dot{\bt}_q^*) = \pi( \dot{\bt}_q) \pi( \dot{\bt}_q^*) .
\end{equation}
However $\pi( \dot{\bt}_q)$ is an isometry and so $\pi( \dot{\bt}_q) \pi( \dot{\bt}_q^*) \neq0$. This implies that the right side of (\ref{eq;faithmon}) and in particular $\pi(\dot{\bt}_a)$ is non zero, as desired.
\end{proof}

We have arrived to our first version of Fell's absorption principle.

\begin{theorem}[Fell's absorption principle for semigroups; first version] \label{thm;main1}
Let $P$ be a submonoid of a group $G$. Let $\eL$ be the enhanced left regular representation of $P$ and let $\pi$ be a $*$-representation of $\bTu (P)$.
Then the map
\begin{equation} \label{eq;Fell1}
\bTl (P)\ni  \eL_p \longmapsto \eL_p \otimes \pi(\bt_p),  \,\, p \in P,
\end{equation}
extends to an injective representation of $\bTl (P)$.
\end{theorem}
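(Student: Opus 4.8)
The plan is to realize the map in (\ref{eq;Fell1}) as the composition of a $*$-isomorphism with a manifestly graded $*$-homomorphism, and then prove injectivity of the latter. Combining Corollary~\ref{C:firststep} with Lemma~\ref{lem;req2}, the assignment $\eL_p \mapsto \eL_p \otimes \bt_p$ extends to a $*$-isomorphism $\Phi \colon \bTl(P) \to \ca(\eL \otimes \bt)$. Since $\ca(\eL \otimes \bt)$ sits inside $\bTl(P) \otimes \bTu(P)$, I would apply $\id \otimes \pi$ and corestrict to obtain a $*$-homomorphism onto the $\ca$-algebra $\ca(\eL \otimes \pi(\bt))$ generated by $\{\eL_p \otimes \pi(\bt_p)\}$, sending $\eL_p \otimes \bt_p \mapsto \eL_p \otimes \pi(\bt_p)$. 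The composition $(\id \otimes \pi) \circ \Phi$ is precisely the map (\ref{eq;Fell1}), which is therefore a well-defined $*$-homomorphism. As $\Phi$ is an isomorphism, it remains to show that $\id \otimes \pi$ is injective on $\ca(\eL \otimes \bt)$.

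Arguing as in Proposition~\ref{P:f coaction} and Lemma~\ref{lem;req2}, both $\ca(\eL \otimes \bt)$ and $\ca(\eL \otimes \pi(\bt))$ carry normal coactions of $G$ whose spectral subspaces are the closed spans of the monomials $\dot{\eL}_a \otimes \dot{\bt}_a$, respectively $\dot{\eL}_a \otimes \pi(\dot{\bt}_a)$, with $\dot{a} = g$; in particular each admits a faithful conditional expectation onto its unit fiber. The map $\id \otimes \pi$ preserves the $G$-grading and hence intertwines the two expectations. By the standard argument --- if $(\id \otimes \pi)(x) = 0$ then the faithful expectation annihilates $(\id \otimes \pi)(x^*x)$, forcing $\id \otimes \pi$ to kill the unit-fiber part of $x^*x$ --- injectivity of $\id \otimes \pi$ on all of $\ca(\eL \otimes \bt)$ reduces to its injectivity, equivalently its isometry, on the unit fiber $\ca(\eL \otimes \bt)_e$.

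On the unit fiber everything is diagonal: for a neutral word $a$ the operator $\dot{L}_a$ implements the partial identity $x \mapsto \dot{a}\,x = x$ on its domain and is therefore a diagonal projection on $\ell^2(P)$, so $\dot{\eL}_a = \oplus_n \dot{L}_a^{\otimes n}$ and the products $\dot{\eL}_a \otimes \dot{\bt}_a$ and $\dot{\eL}_a \otimes \pi(\dot{\bt}_a)$ are mutually commuting projections. Given a finite family $F$ of neutral words and scalars $\la_a$, Lemma~\ref{basicLem} computes $\|\sum_F \la_a (\dot{\eL}_a \otimes \dot{\bt}_a)\|$ and $\|\sum_F \la_a (\dot{\eL}_a \otimes \pi(\dot{\bt}_a))\|$ as the maxima of $|\sum_{a \in A} \la_a|$ over those $\emptyset \neq A \subseteq F$ for which $Q_A$, respectively $\tilde{Q}_A := (\id \otimes \pi)(Q_A)$, is nonzero. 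Since $\id \otimes \pi$ is contractive, it suffices to prove the implication $Q_A \neq 0 \Rightarrow \tilde{Q}_A \neq 0$.

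Here $Q_A = \prod_{a \in A}(\dot{\eL}_a \otimes \dot{\bt}_a)\prod_{b \in F \setminus A}(1 - \dot{\eL}_b \otimes \dot{\bt}_b) = \Phi\big(\prod_{a \in A}\dot{\eL}_a \prod_{b \in F \setminus A}(1 - \dot{\eL}_b)\big)$, so because $\Phi$ is injective $Q_A \neq 0$ is equivalent to $\prod_{a \in A}\dot{\eL}_a \prod_{b \in F \setminus A}(1 - \dot{\eL}_b) \neq 0$ in $\bTl(P)_e$. Being a nonzero diagonal projection, the latter fixes some basis vector $\xi_0 \in \ell^2(P)^{\otimes n_0} \subseteq \eel$, so that $\dot{\eL}_a \xi_0 = \xi_0$ for $a \in A$ and $\dot{\eL}_b \xi_0 = 0$ for all $b \in F \setminus A$. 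I expect this to be the crux of the argument, and the decisive point is that $\xi_0$ already disposes of every complementary factor: for $b \notin A$ one has $(\dot{\eL}_b \otimes \pi(\dot{\bt}_b))(\xi_0 \otimes \eta) = 0$ regardless of $\eta$. It therefore remains only to produce $\eta \neq 0$ with $\pi(\dot{\bt}_a)\eta = \eta$ for every $a \in A$, i.e. to see that $\prod_{a \in A}\pi(\dot{\bt}_a) \neq 0$. Writing $w_A$ for the concatenation of the words in $A$, which is again neutral, one has $\prod_{a \in A}\dot{\eL}_a = \dot{\eL}_{w_A} \neq 0$ since it fixes $\xi_0$, so Lemma~\ref{lem;faithmon} yields $\pi(\dot{\bt}_{w_A}) = \prod_{a \in A}\pi(\dot{\bt}_a) \neq 0$. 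Choosing $\eta$ in the range of this projection, the product vector $\xi_0 \otimes \eta$ lies in the range of $\tilde{Q}_A$, whence $\tilde{Q}_A \neq 0$. This proves the implication, establishes isometry on the unit fiber, and with it the injectivity of (\ref{eq;Fell1}). The only genuine obstacle is this last non-vanishing step; the grading reduction in the first two paragraphs is routine given the coaction machinery already developed.
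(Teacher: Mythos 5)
Your proof is correct and follows essentially the same route as the paper's: well-definedness via Corollary~\ref{C:firststep} and Lemma~\ref{lem;req2} composed with $\id\otimes\pi$, reduction of injectivity to the unit fiber via the coaction/faithful-expectation machinery, and the decisive non-vanishing step via Lemma~\ref{basicLem} together with Lemma~\ref{lem;faithmon} applied to the concatenated word. The only cosmetic differences are that you exhibit an explicit vector $\xi_0\otimes\eta$ fixed by $\tilde{Q}_A$ where the paper instead uses the operator inequality $0\neq Q_A\otimes\prod_{a\in A}\pi(\dot{\bt}_a)\leq\rho(Q_A)$, and you run the expectation-intertwining argument directly where the paper passes through a Fell bundle isomorphism and \textbf{[Exe97, Proposition 3.7]}.
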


\begin{proof}
By combining Corollary~\ref{C:firststep} with Lemma~\ref{lem;req2}, we conclude that the map 
\[
\bTl \ni  \eL_p \longmapsto \eL_p \otimes \bt_p \in \ca(\eL\otimes \bt),  \,\, p \in P,
\]
extends to a $*$-homomorphism (actually isomorphism) between the ambient $\ca$-algebras. By composing this $*$-homomorphism with $$ \id\otimes \pi: \bTl (P) \otimes \bTu (P)\rightarrow  \bTl (P) \otimes \pi(\bTu (P) ), $$ we obtain that the map in (\ref{eq;Fell1}) extends to a (well-defined) $*$-homomorphism $\rho$ between the ambient $\ca$-algebras. It remains to establish its injectivity.

Arguments similar to that of Lemma~\ref{P:f coaction} show that $\ca(\rho)$ admits a coaction $\overline{\de}_{\rho}$, whose spectral subspaces $\ca(\rho)_g$ satisfy
\[
\ca(\rho)_g =  \cspan\{ \dot{\eL}_a   \otimes \pi(\dot{\bt}_a) \mid a \in \W, \dot{a}=g\}, \,\, g \in G.
\]
Hence both $\bTl(P)$ and $\ca(\rho)$ become graded $\ca$-algebras and $\rho$ induces a surjective bundle homomorphism between the associated Fell bundles.

We show now that $\rho$ is isometric on $\bTl(P)_e$.
Consider a non-zero element of $\bTl(P)$ of the form 
\begin{equation} \label{specialsum2}
\alpha := \sum_F \la_{a}\dot{\eL}_{a},
\end{equation}
where $F$ is a finite collection of neutral words. We claim that $\|\rho (\alpha) \|= \|\alpha\|$. The second half of Lemma~\ref{basicLem} implies the existence of a non-empty $A \subseteq F$ so that the projection
\[
Q_A:=\prod_{a \in A} \dot{\eL}_a\prod_{b \in F\backslash A}(1 -\dot{\eL}_b)
\]
satisfies $\|Q_A\alpha\|=\|\alpha\| $. It suffices to show that $\rho(Q_A) \neq 0$. 

Indeed, note that 
\[
(1-\dot{\eL}_b)\otimes 1 \leq 1-\dot{\eL}_b\otimes \pi(\dot{\bt}_b),
\]
for all $b \in F\backslash A$. Furthermore $\prod_{a \in A} \pi(\dot{\bt}_a) \neq 0$ because the representation $\pi$ is faithful on monomials (Lemma~\ref{lem;faithmon}). Therefore
\[
\begin{split}
0\neq Q_A\otimes (\prod_{a \in A} \pi(\dot{\bt}_a ))&= \Big(\prod_{a \in A} \dot{\eL}_a\prod_{b \in F\backslash A}(1 -\dot{\eL}_b)\Big) \otimes \Big(\prod_{a \in A} \pi(\dot{\bt}_a)  \Big) \\
     &= \Big( \prod_a\dot{\eL}_a  \otimes \pi(\dot{\bt}_a)  \Big) \prod_{b \in F\backslash A}(1 -\dot{\eL}_b)\otimes 1\\
     &\leq \Big( \prod_a\dot{\eL}_a  \otimes \pi(\dot{\bt}_a ) \Big) \prod_{b \in F\backslash A}(1-\dot{\eL}_b\otimes \pi(\dot{\bt}_b))\\
     &=\rho\Big( \prod_{a \in A} \dot{\eL}_a\prod_{b \in F\backslash A}(1 -\dot{\eL}_b)\Big) = \rho(Q_A).
\end{split}
\]
This arguments above show that $\rho$ is isometric on sums of the form (\ref{specialsum2}) and therefore on the generated subspace of $\bTl(P)$, i.e., $\bTl(P)_e$.

From the above, it follows that $\rho$ induces a bundle isomorphism between $\{ \bTl(P)_g\}_{g \in G}$ and $\{\ca(\rho) \}_{g \in G}$ and so $\rho$ induces a $*$-isomorphism between the reduced cross sectional $\ca$-algebras $\ca_{r}(\{ \bTl(P)_g\}_{g \in G})$ and $\ca_r(\{\ca(\rho)_g \}_{g \in G})$ which coincides with $\rho$ on fibers. However, both $\bTl(P)$ and $\ca(\rho)$ admit \textit{faithful} expectations that act as the identity on the unit fiber and annihilate all other fibers. Hence by \cite[Proposition 3.7]{Exe97} we have that
\[
\ca_{r} (\{\bTl(P)_g\}_{g \in G}) \simeq \bTl(P) \mbox{ and } \ca_r(\{\ca(\rho)_g \}_{g \in G}) \simeq \ca(\rho)
\]
canonically and the proof of the theorem is complete.
\end{proof}

\begin{remark}
In the case where $P$ is a right LCM submonoid of a group $G$, Proposition~\ref{P:P coa B} implies that the map
\[
\T_{\la}(P) \ni L_p\longmapsto \eL_p \in \bTl(P)
\]extends to an isomorphism between the ambient $\ca$-algebras and so Theorem~\ref{thm;main1} reduces to  Proposition~\ref{P:P coa B} in that case.
\end{remark}

\section{$\bTu(P)$ coincides with Li's semigroup $\ca$-algebra $\ca_s(P)$} \label{sec;Li}

Theorem~\ref{thm;main1} brings on the spotlight the representations of $P$ coming from $\bTu(P)$. It is therefore desirable to have an explicit description for such representations in terms of relations, which will make them more usable. Such a description can be achieved by introducing Li's semigroup $\ca$-algebra $\ca_s(P)$ \cite{Li13} and its defining relations. This will allow us to give a more concrete version of Fell's absorption principle and actually characterize which isometric representations of $P$ can be absorbed by the enhanced left regular representation. On the other hand, we will be able to connect Li's semigroup $\ca$-algebra with a very concrete object, i.e., $\bTl (P)$. Using this connection we will give easy to remember proofs of some earlier results requiring the consideration of inverse semigroup theory. More applications will appear in the next Section. For the moment, we need some additional notation.

If $a = (p_1, p_2, \dots p_{2k-1}, p_{2k})$ is a word in $\W(P)$, we write
\[
K(a) := P\cap(p^{-1}_{2k}p_{2k-1})P\cap (p^{-1}_{2k}p_{2k-1} p^{-1}_{2k-2}p_{2k-3})P\cap \dots \cap (\dot{\tilde{a}})P,
\]
for the \textit{constructible right ideal} associated with $a$. It is easy to see that if $\{ \delta_p\}_{p \in P}$ is the canonical orthonormal basis for $\ell^2(P)$, then 
\[
K(a)=\{p\mid \dot{L}_a\de_p=\de_p, p\in P\}.
\]
We let 
\[
\fJ(P):= \{K(a)\mid a \in \W(P)\},
\]
dropping the reference to $P$ and simply writing $\fJ$, if there is no source of confusion.

\begin{definition}[Definition 3.2 of \cite{Li13}]
Let $P$ be a submonoid of a group $G$ and let $\fJ$ be the collection of all constructible right ideals of $P$. Li's semigroup $\ca$-algebra of $P$, denoted as $\ca_s(P)$, is the universal $\ca$-algebra generated by a family of isometries $\{v_p\}_{p \in P}$ and projections $\{e_S\}_{S \in \fJ\cup\{\emptyset\}}$ such that
\begin{itemize}
\item[(i)] $v_p v_q =v_{pq}$ \,\, whenever $p,q\in P$;
\item[(ii)] $e_{\emptyset} =0$
\item[(iii)] $\dot{v}_a = e_S$ whenever $S \in \fJ$ and $a \in \W$ satisfy $\dot{a}=e$ and $K(a)=S$.
\end{itemize}
\end{definition}

It turns out that $\ca_s (P)$ admits a ``lighter" set of axioms that makes it easier to identify its representations. Indeed, Laca and Sehnem show in \cite[Proposition 3.22]{LaS} that $\ca_s(P)$ is canonically isomorphic to the universal $\ca$-algebra generated by a family of elements $\{w_p\mid p \in P\}$ subject to the relations
\begin{itemize}
\item[(T1)] $w_e = 1$;
\item[(T2)] $\dot{w}_a=0$, if $K(a)=\emptyset$ with $\dot{a}=e$;
\item[(T3)] $\dot{w}_a=\dot{w}_b$ if $a$ and $b$ are neutral words with $K(a)=K(b)$.
\end{itemize}
In particular, any map $w:P\rightarrow B(\H)$ satisfying the relations (T1), (T2) and (T3) is a representation of $P$ by isometries.  In this paper we will use the Laca and Sehnem picture for $\ca_s(P)$ and we will call any representation satisfying the relations (T1), (T2) and (T3) a \textit{Li-covariant representation} of $P$.

\begin{lemma} \label{L;trivial}
Let $X$ be a set and $\{X_i\}_{i=1}^{k}$ be subsets of $X$ so that their $n$-fold cartesian products satisfy  
\[
X^{(n)} = \cup_{i=1}^{k} X_i^{(n)} \mbox{  for all } n \in \bN. 
\]
Then there exists $i_0\in \{1, 2, \dots , k\}$ so that $X = X_{i_0}$.
\end{lemma}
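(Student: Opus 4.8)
The plan is to argue by contradiction, and the whole idea is that a single tuple whose length equals the number $k$ of subsets is already enough to force the conclusion. First I would suppose, for contradiction, that $X_i \neq X$ for every $i \in \{1, \dots, k\}$. Since each $X_i$ is by assumption a subset of $X$, this supposition means that every inclusion $X_i \subseteq X$ is \emph{proper}, so for each index $i$ I can choose a witness point $a_i \in X \setminus X_i$.

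Next I would assemble these $k$ witnesses into the single $k$-tuple $(a_1, a_2, \dots, a_k)$, which of course lies in $X^{(k)}$. Applying the hypothesis in the one instance $n = k$, this tuple must belong to the union $\cup_{i=1}^{k} X_i^{(k)}$, so there exists an index $i_0$ with $(a_1, \dots, a_k) \in X_{i_0}^{(k)}$. By the definition of the $k$-fold Cartesian power, membership in $X_{i_0}^{(k)}$ forces every coordinate to lie in $X_{i_0}$; in particular the $i_0$-th coordinate satisfies $a_{i_0} \in X_{i_0}$. But $a_{i_0}$ was chosen precisely so that $a_{i_0} \in X \setminus X_{i_0}$, i.e.\ $a_{i_0} \notin X_{i_0}$, which is the desired contradiction. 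Hence the supposition fails and some $X_{i_0}$ must equal $X$.

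I do not anticipate a genuine obstacle: the argument is a diagonal/pigeonhole observation, and the only point requiring care is to notice that taking $n = k$ makes the tuple long enough to carry one ``forbidden'' coordinate for each of the $k$ sets at once — this is exactly where the finiteness of the family $\{X_i\}_{i=1}^k$ is used, and no other value of $n$ in the hypothesis is needed. The degenerate case $X = \emptyset$ can be dispatched separately in one line, since then every $X_i = \emptyset = X$ automatically (or equivalently, the contradiction step is vacuous because no witnesses can be chosen, which already signals that some $X_i$ was not proper).
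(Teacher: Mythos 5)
Your proof is correct and is essentially identical to the paper's: the paper likewise assumes each $X_i$ is proper, picks $x_i \in X \setminus X_i$, and observes that $(x_1,\dots,x_k) \in X^{(k)} \setminus \cup_{i=1}^k X_i^{(k)}$, contradicting the hypothesis at $n=k$. Your write-up just spells out the coordinate argument and the empty-set edge case more explicitly.
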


\begin{proof}
Indeed if not, then for each $1\leq i \leq k$, choose $x_i\in X\backslash X_i$ and note that 
\[
(x_1, x_2, \dots x_k) \in X^{(k)}\backslash \cup_{i=1}^k X^{(k)}_i , 
\]
a contradiction.
\end{proof}

\begin{theorem} \label{thm;Li}
Let $P$ be a submonoid of a group $G$. Then $\bTu(P)$ is canonically isomorphic with Li's semigroup $\ca$-algebra $\ca_s (P)$.
\end{theorem}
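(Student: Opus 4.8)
The plan is to build a canonical surjection $\Phi\colon \ca_s(P)\to\bTu(P)$ and then prove it is injective by a Fell-bundle argument whose combinatorial core is Lemma~\ref{L;trivial}. First I would verify that the generators $\{\bt_p\}_{p\in P}$ of $\bTu(P)$ form a Li-covariant representation, i.e. satisfy (T1)--(T3). Relation (T1) is immediate since $\eL_e=1$. For (T2) and (T3) I use that $\bl\colon\bTu(P)\to\bTl(P)$ restricts to a $*$-isomorphism on the unit fibre, so the relations can be read off $\dot{\eL}_a=\bigoplus_{n=1}^{\infty}\dot L_a^{\otimes n}$ inside $\bTl(P)_e$. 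Writing $P_S$ for the projection of $\ell^2(P)$ onto $\cspan\{\delta_p\mid p\in S\}$, a neutral word $a$ gives $\dot L_a=P_{K(a)}$, so that $\dot{\eL}_a=0$ exactly when $K(a)=\emptyset$, and $\dot{\eL}_a=\dot{\eL}_b$ exactly when $K(a)=K(b)$ (both equivalences are already visible at the $n=1$ summand). Pulling these back through $\bl$ gives (T2) and (T3) for $\{\bt_p\}$, and the universal property of $\ca_s(P)$ yields a $*$-homomorphism $\Phi\colon\ca_s(P)\to\bTu(P)$ with $\Phi(w_p)=\bt_p$; it is onto because the $\dot{\bt}_a$ densely span every fibre $\bTu(P)_g$.

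Next I would install compatible gradings. The family $\{w_p\otimes u_p\}$ again satisfies (T1)--(T3) in $\ca_s(P)\otimes\ca(G)$, so by universality there is a $*$-homomorphism $w_p\mapsto w_p\otimes u_p$, which, arguing as in Proposition~\ref{P:f coaction}, is a coaction of $G$ on $\ca_s(P)$ with spectral subspaces $\ca_s(P)_g=\cspan\{\dot w_a\mid \dot a=g\}$. Then $\Phi$ is equivariant and carries $\ca_s(P)_g$ into $\bTu(P)_g$. Since for $\xi\in\ca_s(P)_g$ one has $\|\Phi(\xi)\|^2=\|\Phi(\xi^*\xi)\|$ with $\xi^*\xi\in\ca_s(P)_e$, the entire question of injectivity (indeed of $\Phi$ being a fibrewise isometry) reduces to showing that $\Phi_e\colon\ca_s(P)_e\to\bTu(P)_e$ is injective, equivalently, after post-composing with the isomorphism $\bl$, that $\dot w_a\mapsto\dot{\eL}_a$ is injective from $\ca_s(P)_e$ into $\bTl(P)_e$.

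This injectivity is the heart of the matter and is exactly where the enhancement earns its keep. Both algebras are commutative, generated by the projections $e_S=\dot w_a$ (resp. $\bigoplus_n P_S^{\otimes n}=\dot{\eL}_a$) indexed by $S=K(a)\in\fJ$, with $e_Se_T=e_{S\cap T}$ on both sides. By Lemma~\ref{basicLem} it suffices to prove that a basic Boolean projection $Q=e_{S_0}\prod_{T}(1-e_{T})$ is nonzero in the universal diagonal $\ca_s(P)_e$ if and only if its image is nonzero in $\bTl(P)_e$; after replacing each $T$ by $T\cap S_0$ we may assume $T\subseteq S_0$. On the concrete side the image of $Q$ is $\bigoplus_{n}P_{S_0}^{\otimes n}\prod_T(1-P_{T}^{\otimes n})$, i.e. the projection of $\eel$ onto $\bigoplus_n\ell^2\!\big(S_0^{(n)}\setminus\bigcup_T T^{(n)}\big)$; this vanishes precisely when $S_0^{(n)}=\bigcup_T T^{(n)}$ for every $n$, which by Lemma~\ref{L;trivial} forces $T=S_0$ for some $T$. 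On the universal side, the principal filter at $S_0$ furnishes a character showing that $Q=0$ in $\ca_s(P)_e$ under exactly the same condition. Hence $\Phi_e$ is isometric. It is precisely here that the plain left regular representation would fail: at the single level $n=1$ the identity $S_0=\bigcup_T T$ can hold with no $T=S_0$, which is the origin of Li's independence obstruction, and the tensor powers in $\eL$ are what destroy all such ``covering'' relations.

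Finally I would upgrade the fibrewise isometry to a global isomorphism. Having shown that $\Phi$ is a surjective, equivariant $*$-homomorphism that is isometric on every fibre, $\Phi$ implements an isomorphism of the Fell bundles $\{\ca_s(P)_g\}$ and $\{\bTl(P)_g\}$. Since $\bTu(P)$ is by definition the full cross-sectional algebra of the latter bundle, and $\ca_s(P)$, by its universal property for Li-covariant representations, is the full cross-sectional algebra of the former, functoriality of $\B\mapsto\ca(\B)$ promotes this bundle isomorphism to the desired canonical $*$-isomorphism $\ca_s(P)\simeq\bTu(P)$. The main obstacle is the unit-fibre injectivity of the third paragraph; the remaining steps are bookkeeping with coactions and cross-sectional algebras, but that step is where Lemma~\ref{L;trivial} and the direct-sum-of-tensor-powers design of $\eL$ are indispensable.
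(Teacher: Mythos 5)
Your proposal is correct and follows essentially the same route as the paper's proof: universality of $\ca_s(P)$ gives the surjection $w_p \mapsto \bt_p$, a coaction/grading reduces injectivity to the unit fibre, Lemma~\ref{basicLem} combined with Lemma~\ref{L;trivial} yields the unit-fibre isometry, and Fell-bundle functoriality supplies the inverse. The only real difference is cosmetic: where you invoke a principal-filter character on $\ca_s(P)_e$ (whose extension from the semilattice of projections to the C*-algebra you do not justify), the paper uses relation (T3) directly, and in fact your character step is logically redundant, since the implication actually needed, namely $\Phi_e(Q)=0 \Rightarrow Q=0$, already follows from Lemma~\ref{L;trivial} together with the (T3)-well-definedness of the projections $e_S$.
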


\begin{proof}
Since all three  relations (T1), (T2) and (T3) manifest in the unit fiber of $\bTl (P)$ with $w_p=\eL_p$, $p \in P$, the isometries $\{\bt_p \}_{p\in P} \subseteq \bTu (P)$ will also satisfy properties (T1), (T2) and (T3). By universality, there exists a $*$-homomorphism 
\[
\rho :  \ca_s (P) \longrightarrow \bTu (P); w_p \longmapsto \bt_p, \,\, \mbox{ for all }p \in P.
\]
In order to produce an inverse for $\rho$, we need to establish that $\ca_s (P)$ is a cross-sectional $\ca$-algebra for the Fell bundle $\{\bTl (P)_{g}\}_{g \in G}$. 

It is easily seen that $\ca_s (P)$ admits a coaction $\de_s: \ca_s (P) \rightarrow \ca_s (P) \otimes \ca(G)$ with spectral subspaces satisfying 
\[
 \ca_s(P)_g = \cspan\{ \dot{w}_a \mid a \in \W, \dot{a}=g\}, \,\, g \in G.
\]
Let $\rho_g$ be the restriction of $(\bl  )\circ \rho$ on the fiber $\ca_s (P)_g$, $g \in G$.  We will show that $\{\rho_g\}_{g \in G}$ promotes to an isomorphism between the Fell bundles $\{  \ca_s(P)_g  \}_{g \in G}$ and $\{\bTl (P)_{g}\}_{g \in G}$.

Consider a non-zero element of $\ca_s(P)$ of the form 
\begin{equation} \label{specialsum}
\alpha := \sum_F \la_{a}\dot{w}_{a},
\end{equation}
where $F$ is a finite collection of neutral words. 

\vspace{.1in}

\noindent \textbf{Claim:} $\| \rho_e(\al )  \|= \| \al \|$.

\vspace{.05in}

\noindent \textit{Proof of Claim.} By way of contradiction assume otherwise. Then the second half of Lemma~\ref{basicLem} implies the existence of a non-empty $A \subseteq F$ so that the projection
\[
Q_A:=\prod_{a \in A} \dot{w}_a\prod_{b \in F\backslash A}(1 -\dot{w}_b)
\]
satisfies $\|Q_A\alpha\|=\|\alpha\| $ and yet
\[
\rho_e(Q_A) =\prod_{a \in A} \dot{\eL}_a\prod_{b \in F\backslash A}(1 -\dot{\eL}_b) =0.
\]
By concatenating all words in $A$ we obtain $a_0\in \W$ so that 
\[
\dot{w}_{a_0} = \prod_{a \in A} \dot{w}_a \,\, \mbox{ and } \,\,  \dot{\eL}_{a_0} = \prod_{a \in A} \dot{\eL}_a 
\]
and so 
\begin{equation} \label{eq;claim}
Q_A=\prod_{b \in F\backslash A}(\dot{w}_{a_0} -\dot{w}_{a_0 b})
\end{equation}
while 
\[
\rho_e(Q_A )=\prod_{b \in F\backslash A}(\dot{\eL}_{a_0} -\dot{\eL}_{a_0 b})  
= \loplus_{n=1}^{\infty} \big( \prod_{b \in F\backslash A}(\dot{L}_{a_0}^{\otimes n} -\dot{L}_{a_0 b}^{\otimes n}) \Big)    =0.
\]
The right side of the last equality implies that $K(a_0)^{(n)} = \cup_{b \in F\backslash A} K(a_0 b)^{(n)}$, for all $n \in \bN$, and so Lemma~\ref{L;trivial} implies that there exist $b_0 \in F\backslash A$ so that $K(a_0)=K(a_0 b_0)$. Hence (T3) implies that $\dot{w}_{a_0}  = \dot{w}_{a_0 b_0}$ and so by substituting in (\ref{eq;claim}) we obtain that $Q_A=0$, a contradiction that proves the claim.

\vspace{.05in}

The claim shows now that $\rho_e$ is isometric on a dense subset of $\ca_s (P)_e$ and so an injective $*$-isomorphism between $\ca_s (P)_e$ and $\bTl (P)_e$. Using the $\ca$-identity, it follows that $\rho_g$ is well defined and isometric for all $g \in G$. It is routine to verify the rest of the properties that establish that $\{\rho_g\}_{g \in G}$ is an isomorphism between the Fell bundles $\{ \ca_s(P)_g \}_{g \in G}$ and $\{\bTl (P)_{g}\}_{g \in G}$, as desired. 

Since $\ca_s (P)$ is a cross-sectional $\ca$-algebra for the Fell bundle $\{\bTl (P)_{g}\}_{g \in G}$, there exists a surjective $*$-homomorphism 
\[
\bTu (P)= \ca ( \{\bTl (P)_{g}\}_{g \in G}) \longrightarrow \ca_s (P); \bt_p \longmapsto w_p, \,\, \mbox{ for all } p \in P.
\]
This is the desired inverse for $\rho$ and the proof of the theorem is complete.
\end{proof}

\begin{remark} \label{rem;free}
It follows now from the proof of Theorem~\ref{thm;Li} that the unit fiber of $\{ \ca_s (P)_g \}_{g \in G}$ demonstrates a remarkable property which we record here for future reference. 

\textit{If $\{ a_i\}_{i=0}^n$ is a finite collection of neutral words in $\W$ and $\prod_{i=1}^n (\dot{w}_{a_0}-\dot{w}_{a_0 a_i })=0$, then necessarily $\dot{w}_{a_0 }= \dot{w}_{a_0a_i}$ for some $i=1, 2, \dots, n$.}
\end{remark}

In \cite[Introduction: Question 3]{KKLL} the authors ask: is there a co-universal algebra and what is its form? We can answer this question fully for $\ca_s(P)$ in the case where the submonoid $P$ embeds in an amenable group. 

\begin{corollary} \label{cor;question}
Let $P$ be a submonoid of an amenable group $G$. Then Li's semigroup $\ca$-algebra $\ca_s(P)$ is canonically isomorphic with $\bTl (P)$, i.e., the $\ca$-algebra generated by the enhanced left regular representation.
\end{corollary}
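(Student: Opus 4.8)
The plan is to avoid re-proving anything analytic and instead to identify both $\bTl(P)$ and $\bTu(P)$ as cross-sectional $\ca$-algebras of one and the same Fell bundle $\{\bTl(P)_g\}_{g\in G}$, and then invoke amenability of $G$. First I would recall the two pictures already available in the paper. By Theorem~\ref{thm;Li} the algebra $\ca_s(P)$ is canonically isomorphic to $\bTu(P)$ via $w_p\mapsto \bt_p$, and $\bTu(P)$ is by definition the \emph{full} cross-sectional $\ca$-algebra $\ca(\{\bTl(P)_g\}_{g\in G})$. On the other hand, in the proof of Proposition~\ref{doubling} it was shown, using the faithful conditional expectation associated with $\bar{\de}$ together with \cite[Proposition 3.7]{Exe97}, that $\bTl(P)$ is canonically the \emph{reduced} cross-sectional $\ca$-algebra $\ca_r(\{\bTl(P)_g\}_{g\in G})$.

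With these two identifications in place, the canonical $*$-homomorphism $\bl:\bTu(P)\to\bTl(P)$ from the defining diagram is precisely the regular representation of the bundle, i.e., the canonical surjection $\ca(\{\bTl(P)_g\})\to\ca_r(\{\bTl(P)_g\})$ that restricts to the identity on each fiber. Indeed, $\bl$ is a $*$-isomorphism on the unit fiber (as already observed in the text), and more generally it is the fiber-identity map intertwining the full and reduced pictures. Hence the entire corollary reduces to showing that this single surjection $\bl$ is injective.

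This is where the hypothesis enters, and it is the only non-formal ingredient: since $G$ is amenable, every Fell bundle over $G$ is amenable, so its regular representation $\ca(\B)\to\ca_r(\B)$ is faithful (see \cite{Exe17}). Applying this to $\B=\{\bTl(P)_g\}_{g\in G}$ shows that $\bl$ is an isomorphism. Composing with the canonical isomorphism $\ca_s(P)\simeq\bTu(P)$ of Theorem~\ref{thm;Li}, we obtain a canonical isomorphism $\ca_s(P)\simeq\bTl(P)$ carrying $w_p$ to $\bt_p$ and then to $\eL_p$ for every $p\in P$, as desired.

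The hard part, such as it is, is simply recognizing and correctly citing the amenability statement for Fell bundles over amenable discrete groups; once $\bTu(P)$ and $\bTl(P)$ are matched up as the full and reduced cross-sectional algebras of the same bundle, no further computation is needed, and the map $\bl$ already constructed does all the work.
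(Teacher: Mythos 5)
Your proof is correct and takes essentially the same route the paper intends: the corollary is stated there without proof as an immediate consequence of Theorem~\ref{thm;Li}, together with the identification $\bTl(P)\simeq\ca_r(\{\bTl(P)_g\}_{g\in G})$ already established via \cite[Proposition 3.7]{Exe97} and Exel's theorem that every Fell bundle over an amenable discrete group is amenable (so that $\bl$, being the regular representation of the bundle, is faithful). Your write-up merely makes explicit these identifications, which is precisely the argument the paper leaves implicit (and invokes again in the proof of Theorem~\ref{thm;char}).
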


We now state Fell's absorption principle for submonoids of groups and characterize which isometric representations can actually be absorbed by the enhanced left regular representation.

\begin{theorem}[Fell's absorption principle for semigroups; second version] \label{thm;main2}
Let $P$ be a submonoid of a group $G$ and let $\eL$ be the enhanced left regular representation of $P$. If $V= \{ V_p\}_{p \in P}$ is an isometric representation of $P$, then the following are equivalent
\begin{itemize}
\item[\textup{(i)}] The map
\begin{equation*} \label{eq;Fell}
\bTl(P) \ni \eL_p \longmapsto \eL_p \otimes V_p,  \,\, p \in P,
\end{equation*}
extends to an injective $*$-homomorphism.
\item[\textup{(ii)}] $\dot{V}_a=\dot{V}_b$,
provided that $a, b \in \W$ are neutral words with $K(a)=K(b)\neq \emptyset$.
\end{itemize}
\end{theorem}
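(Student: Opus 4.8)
The plan is to establish the two implications separately; $(\mathrm{i})\Rightarrow(\mathrm{ii})$ is a one-line cancellation, and all of the work sits in $(\mathrm{ii})\Rightarrow(\mathrm{i})$. For $(\mathrm{i})\Rightarrow(\mathrm{ii})$, let $\Phi$ be the $*$-homomorphism provided by (i), so that $\Phi(\eL_p)=\eL_p\otimes V_p$ and hence $\Phi(\dot{\eL}_a)=\dot{\eL}_a\otimes\dot V_a$ for every $a\in\W$. If $a,b$ are neutral with $K(a)=K(b)\neq\mt$, then $\dot{\eL}_a$ and $\dot{\eL}_b$ are both the orthogonal projection of $\eel$ onto $\loplus_{n}\ell^2(K(a))^{\otimes n}$, so $\dot{\eL}_a=\dot{\eL}_b$ in $\bTl(P)$. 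Applying $\Phi$ gives $\dot{\eL}_a\otimes(\dot V_a-\dot V_b)=0$, and since $\dot{\eL}_a\neq 0$ this forces $\dot V_a=\dot V_b$. (This direction uses only that $\Phi$ is a well-defined homomorphism, not its injectivity.)

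For $(\mathrm{ii})\Rightarrow(\mathrm{i})$ the first step is to manufacture the homomorphism without assuming any covariance of $V$ itself. I would consider the isometric representation $W_p:=\eL_p\otimes V_p$ on $\eel\otimes\H$ and check that $\{W_p\}_{p\in P}$ is Li-covariant. Relation (T1) is clear; for a neutral word $a$ one has $\dot W_a=\dot{\eL}_a\otimes\dot V_a$, so if $K(a)=\mt$ then $\dot{\eL}_a=0$ and $\dot W_a=0$, which gives (T2) \emph{automatically}; and if $K(a)=K(b)$ then either both ideals are empty, so $\dot W_a=\dot W_b=0$, or they are nonempty, in which case $\dot{\eL}_a=\dot{\eL}_b$ and $\dot V_a=\dot V_b$ by (ii), so again $\dot W_a=\dot W_b$, which is (T3). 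The key point is that tensoring with $\eL$ absorbs the empty-ideal relation for free, so $V$ need not be Li-covariant on its own. By Theorem~\ref{thm;Li} and the universal property of $\ca_s(P)\cong\bTu(P)$, this produces a $*$-homomorphism $\sigma\colon\bTu(P)\to\ca(\eL\otimes V)$ with $\sigma(\bt_p)=\eL_p\otimes V_p$.

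Next I would transport $\sigma$ to the reduced object by showing it is isometric on the unit fibre, following the scheme of Theorem~\ref{thm;main1}. Since $\bl$ is a $*$-isomorphism on $\bTu(P)_e$, the map $\rho:=\sigma\circ(\bl|_{\bTu(P)_e})^{-1}\colon\bTl(P)_e\to\ca(\eL\otimes V)_e$, $\dot{\eL}_a\mapsto\dot{\eL}_a\otimes\dot V_a$, is automatically a contractive $*$-homomorphism; only its injectivity remains. The input replacing Lemma~\ref{lem;faithmon} is that (ii) already forces $V$ to be faithful on monomials: if $K(a)\neq\mt$, pick $q\in K(a)$; as $K(a)$ is a right ideal, $qP\subseteq K(a)$, so the neutral word $c=(e,q,q,e)$ has $K(c)=qP$ and $K(ac)=K(a)\cap qP=qP=K(c)\neq\mt$, whence (ii) gives $\dot V_a V_qV_q^*=\dot V_{ac}=\dot V_c=V_qV_q^*\neq0$ and therefore $\dot V_a\neq0$. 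Now given a nonzero $\alpha=\sum_F\la_a\dot{\eL}_a$ with the $\dot{\eL}_a$ distinct and nonzero (so each $K(a)\neq\mt$), Lemma~\ref{basicLem} yields a nonempty $A\subseteq F$ with $Q_A\neq0$ and $\|Q_A\alpha\|=\|\alpha\|$, and it suffices to show $\rho(Q_A)\neq0$. Writing $a_0$ for the concatenation of the words in $A$, nonvanishing of $Q_A$ provides an $n$ and a basis tuple $\de_{r_1}\otimes\dots\otimes\de_{r_n}$ with all $r_i\in K(a_0)=\bigcap_{a\in A}K(a)$ and $(r_1,\dots,r_n)\notin K(b)^{(n)}$ for each $b\in F\setminus A$. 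Then every factor $1-\dot{\eL}_b\otimes\dot V_b$ fixes $(\de_{r_1}\otimes\dots\otimes\de_{r_n})\otimes\eta$, because $\dot{\eL}_b$ kills the tuple, while each factor $\dot{\eL}_a\otimes\dot V_a$ fixes the first leg and applies $\dot V_a$; choosing $\eta$ with $\dot V_{a_0}\eta\neq0$ (possible since $K(a_0)\neq\mt$) shows $\rho(Q_A)$ sends this vector to $(\de_{r_1}\otimes\dots\otimes\de_{r_n})\otimes\dot V_{a_0}\eta\neq0$. Hence $\rho$ is isometric on $\bTl(P)_e$, and, exactly as in Theorem~\ref{thm;main1}, the $\ca$-identity promotes this to a bundle isomorphism between $\{\bTl(P)_g\}$ and $\{\ca(\eL\otimes V)_g\}$; since both algebras carry faithful expectations onto their unit fibres (from normal coactions, as in Proposition~\ref{P:f coaction}), \cite[Proposition 3.7]{Exe97} identifies each with the reduced cross-sectional algebra of its bundle, so $\rho$ promotes to the desired injective extension of $\eL_p\mapsto\eL_p\otimes V_p$, which is (i).

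The main obstacle I anticipate is the gap between hypothesis (ii) — an equality of operators only for \emph{nonempty} equal constructible ideals — and what the absorption argument appears to require. The resolution is twofold and is the technical heart of the proof: first, passing to $W_p=\eL_p\otimes V_p$ makes the missing empty-ideal relation (T2) hold trivially, so $\sigma$ exists even though $V$ may fail to be Li-covariant; and second, (ii) nonetheless already encodes faithfulness on monomials, which is precisely the positivity that Lemma~\ref{lem;faithmon} supplied in the proof of Theorem~\ref{thm;main1}. I expect the subtle point to be verifying that these two consequences genuinely suffice — in particular that, once one argues with the explicit basis vectors $\de_{r_1}\otimes\dots\otimes\de_{r_n}$, no commutativity or self-adjointness of the individual operators $\dot V_a$ is needed.
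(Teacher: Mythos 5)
Your proof is correct, and its first half coincides with the paper's: both verify that $W_p := \eL_p \otimes V_p$ satisfies (T1)--(T3) (with (T2) holding automatically because $K(a)=\mt$ forces $\dot{\eL}_a=0$) and invoke Theorem~\ref{thm;Li} to obtain a $*$-homomorphism of $\bTu(P)$ sending $\bt_p \mapsto \eL_p\otimes V_p$. After that the routes diverge. The paper finishes by composition: it feeds this representation into Theorem~\ref{thm;main1}, which yields injectivity of $\eL_p \mapsto \eL_p\otimes\eL_p\otimes V_p$, and then deletes the middle tensor leg using Corollary~\ref{C:firststep} together with the fact that an injective $*$-homomorphism stays injective after minimally tensoring with an identity map. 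You instead bypass Theorem~\ref{thm;main1} and Corollary~\ref{C:firststep} and re-run the Fell-bundle argument directly for the map $\eL_p\mapsto\eL_p\otimes V_p$: you extract faithfulness on monomials of $V$ from hypothesis (ii) alone (your word $c=(e,q,q,e)$ computation, with $K(ac)=K(a)\cap qP=qP$, is a semigroup-level analogue of Lemma~\ref{lem;faithmon}), and you replace the operator inequalities in the proof of Theorem~\ref{thm;main1} by an explicit evaluation on vectors $(\de_{r_1}\otimes\cdots\otimes\de_{r_n})\otimes\eta$, correctly concluding that $Q_A\neq 0$ implies $\rho(Q_A)\neq 0$; the usual faithful-expectation argument via \cite[Proposition 3.7]{Exe97} then closes the proof. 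The trade-off: the paper's argument is only a few lines because all the hard work was already done, whereas yours is longer but proves the absorption in a single pass, avoids the doubling-and-cancellation detour through $\ca(\eL\otimes\eL\otimes V)$ (and the tensor-injectivity fact), and isolates the useful observation that (ii) by itself forces $\dot{V}_a\neq 0$ whenever $K(a)\neq\mt$. The one step you assert rather than prove --- that $\ca(\eL\otimes V)$ carries a normal coaction, hence a topological grading with faithful expectation, whose spectral subspaces are $\cspan\{\dot{\eL}_a\otimes\dot{V}_a \mid a\in\W,\ \dot{a}=g\}$ --- is stated at the same level of brevity the paper allows itself in Lemma~\ref{lem;req2} and Theorem~\ref{thm;main1}; for an arbitrary isometric $V$ it can be implemented, as in Proposition~\ref{P:f coaction} and Proposition~\ref{prop;KKLLco}, by conjugating with the unitary on $\eel\otimes\H\otimes\ell^2(G)$ that sends $\de_{r_1}\otimes\cdots\otimes\de_{r_n}\otimes h\otimes\de_g$ to $\de_{r_1}\otimes\cdots\otimes\de_{r_n}\otimes h\otimes\de_{r_1 g}$, so this is a stylistic omission rather than a gap.
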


\begin{proof}
Verifying that (i) implies (ii) is elementary. Assume that (ii) is holding. It is easy to see that the representation $ \eL\otimes V$ satisfies (T1), (T2) and (T3) and so the map
\begin{equation*}  
w_p \longmapsto \eL_p \otimes V_p,  \,\, p \in P,
\end{equation*}
extends to a representation of Li's algebra $\ca_s(P)$. By composing with the canonical isomorphism of Theorem~\ref{thm;Li} we obtain a representation $\pi$ of $\bTu (P)$ satisfying $\pi(\bt_p) = \eL_p \otimes V_p$, for all $p \in P$. Theorem~\ref{thm;main1} implies now that the map 
\begin{equation}  \label{eq;corFell1}
\eL_p \longmapsto \eL_p \otimes \pi(\bt_p) = \eL_p \otimes \eL_p \otimes V_p,  \,\, p \in P,
\end{equation}
extends to a $*$-isomorphism between the generated $\ca$-algebras.

On the other hand, we have from Corollary~\ref{C:firststep}
that $\eL_p \otimes \eL_p \longmapsto \eL_p$, $p \in P$ extends to a $*$-isomorphism and so by a fundamental property of the spatial tensor product of $\ca$-algebras, the same is true for 
\begin{equation} \label{eq;corFell2}
\eL_p \otimes \eL_p \otimes V_p\longmapsto \eL_p  \otimes V_p, \,\, p \in P.
\end{equation}
The conclusion follows now by composing the $*$-isomorphisms coming from (\ref{eq;corFell1}) and (\ref{eq;corFell2}).
\end{proof}

\begin{remark}
One might hope that the strategy in the proof of Theorem~\ref{thm;main2} might also work for the map
\[
\T_{\la}(P) \ni L_p \longmapsto  L_p \otimes V_p,  \,\, p \in P,
\]
and thus show that it extends to an injective $*$-homomorphism from the Toeplitz algebra $\T_{\la}(P) $.
We remind the reader that this cannot be done even in the simplest case where $V$ is the left regular representation.
\end{remark}

The enhanced left regular representation and the connection with $\ca_s(P)$ of Theorem~\ref{thm;Li} offers great insight in the structure of semigroup $\ca$-algebras. Apart from establishing Theorem~\ref{thm;main2} and Corollary~\ref{cor;question} it will allow us to obtain now direct proofs for results which were obtained elsewhere with more involved proofs. In Section~\ref{Sec;nsa} we will see more applications.

Consider the inverse semigroup $\I_{L}:= \{\dot{L}_a\mid a \in \W\} \subseteq \T_{\la}(P)$ and let $\ca(\I_{L})$ be its universal $\ca$-algebra. The following was obtained in \cite{Li13} under the assumption that $P$ satisfies the Toeplitz condition. That assumption was removed several years later in \cite{KKLL} with a proof that required a clever argument. It now follows directly from Theorem~\ref{thm;Li}.

\begin{corollary}[Theorem 3.2 of \cite{KKLL}] \label{cor;invsem}
Let $P$ be a submonoid of a group $G$. Then $\ca_s(P)$ is canonically isomorphic to $\ca(\I_{L})$.
\end{corollary}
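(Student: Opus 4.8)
The plan is to exhibit mutually inverse $*$-homomorphisms between $\ca_s(P)$ and $\ca(\I_L)$, matching their universal properties. On the $\ca_s(P)$ side I use the Laca--Sehnem picture underlying Theorem~\ref{thm;Li}: generators $\{w_p\}_{p\in P}$ subject to (T1)--(T3), with $\dot{w}_a:=w_{p_1}^{*}w_{p_2}\cdots w_{p_{2k}}$ for a word $a=(p_1,\dots,p_{2k})$. On the $\ca(\I_L)$ side I use the canonical partial isometries $\{s_t\}_{t\in\I_L}$. I write $\tilde a$ for the reversed word (as in the definition of $K(a)$), so that $\dot{L}_{\tilde a}=\dot{L}_a^{*}$ and $\dot{w}_{\tilde a}=\dot{w}_a^{*}$, and I concatenate words freely, so that $\dot{L}_{ab}=\dot{L}_a\dot{L}_b$ and $\dot{w}_{ab}=\dot{w}_a\dot{w}_b$. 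I shall use the description, established in the course of proving Theorem~\ref{thm;Li}, that for a \emph{neutral} word $a$ the operator $\dot{L}_a$ is the orthogonal projection onto $\ell^2(K(a))$; consequently $\dot{L}_a=\dot{L}_b$ for neutral $a,b$ if and only if $K(a)=K(b)$, and the idempotent semilattice of $\I_L$ is $\{\,\bm_{\ell^2(S)}\mid S\in\fJ\cup\{\emptyset\}\,\}$.

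The forward map $\Phi\colon\ca_s(P)\to\ca(\I_L)$ is the easy direction. Putting $w_p:=s_{L_p}$ (where $L_p=\dot{L}_{(e,p)}\in\I_L$), relations (T1)--(T3) are immediate: $w_e=s_I=1$; if $\dot a=e$ and $K(a)=\emptyset$ then $\dot{L}_a=0$ in $\I_L$, so $\dot{w}_a=s_0=0$; and if $a,b$ are neutral with $K(a)=K(b)$ then $\dot{L}_a=\dot{L}_b$, whence $\dot{w}_a=s_{\dot{L}_a}=s_{\dot{L}_b}=\dot{w}_b$. By universality of $\ca_s(P)$ this yields a $*$-homomorphism with $\Phi(\dot{w}_a)=s_{\dot{L}_a}$, surjective because the $L_p$ generate $\I_L$.

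For the reverse map I extend $\dot{L}_a\mapsto\dot{w}_a$ to a $*$-representation of $\I_L$ inside $\ca_s(P)$, and the one genuinely nontrivial point is its well-definedness. First I record the structural facts on the target side: for neutral $a$ one has $K(\tilde a a)=K(a)$, so (T3) gives $\dot{w}_a^{*}\dot{w}_a=\dot{w}_{\tilde a a}=\dot{w}_a$, and taking adjoints shows $\dot{w}_a$ is a selfadjoint idempotent; thus a neutral $\dot{w}_a$ is a projection depending only on $K(a)$, vanishing when $K(a)=\emptyset$ by (T2). For an arbitrary word $a$, since $\dot{w}_a^{*}\dot{w}_a=\dot{w}_{\tilde a a}$ is a projection, $\dot{w}_a$ is automatically a partial isometry. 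Now suppose $\dot{L}_a=\dot{L}_b$. If this common value is $0$, then $\dot{L}_a^{*}\dot{L}_a=0$ forces $K(\tilde a a)=\emptyset$, so $\dot{w}_a^{*}\dot{w}_a=\dot{w}_{\tilde a a}=0$ and $\dot{w}_a=0$, and likewise $\dot{w}_b=0$. Otherwise $\dot a=\dot b$ (distinct spectral subspaces of the coaction meet trivially), so $\tilde a b$, $\tilde a a$, $a\tilde a$ and $b\tilde b$ are neutral words, and the operator identities
\[
\dot{L}_a^{*}\dot{L}_b=\dot{L}_a^{*}\dot{L}_a,\qquad \dot{L}_a\dot{L}_a^{*}=\dot{L}_b\dot{L}_b^{*}
\]
translate, via the projection description of neutral words, into $K(\tilde a b)=K(\tilde a a)$ and $K(a\tilde a)=K(b\tilde b)$. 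Applying (T3) gives $\dot{w}_a^{*}\dot{w}_b=\dot{w}_a^{*}\dot{w}_a$ and $\dot{w}_a\dot{w}_a^{*}=\dot{w}_b\dot{w}_b^{*}$, and then the partial-isometry computation
\[
\dot{w}_b=\dot{w}_b\dot{w}_b^{*}\dot{w}_b=\dot{w}_a\dot{w}_a^{*}\dot{w}_b=\dot{w}_a\dot{w}_a^{*}\dot{w}_a=\dot{w}_a
\]
closes the argument. Hence $\dot{L}_a\mapsto\dot{w}_a$ is a well-defined, multiplicative, $*$-preserving map sending $0$ to $0$, and universality of $\ca(\I_L)$ produces $\Psi\colon\ca(\I_L)\to\ca_s(P)$ with $\Psi(s_{\dot{L}_a})=\dot{w}_a$.

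Finally $\Psi\Phi(w_p)=w_p$ and $\Phi\Psi(s_{L_p})=s_{L_p}$; since the $w_p$ generate $\ca_s(P)$ and the $s_{L_p},s_{L_p}^{*}$ generate $\ca(\I_L)$, the maps $\Phi$ and $\Psi$ are mutually inverse, giving the canonical isomorphism. The main obstacle is exactly the well-definedness step: transporting the concrete operator equalities $\dot{L}_a=\dot{L}_b$ to the universal algebra $\ca_s(P)$. What makes this painless here---whereas it required a clever argument in \cite{KKLL}---is that the Laca--Sehnem relation (T3) is tailored precisely to absorb such equalities once they are reduced, through the partial-isometry identity $s=t\iff s^{*}t=s^{*}s,\ ss^{*}=tt^{*}$, to equalities of the neutral-word projections $\bm_{\ell^2(K(\cdot))}$.
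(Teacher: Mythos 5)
Your proof is correct, but it takes a genuinely different route from the paper's. The forward map is the same in both arguments (universality of the Laca--Sehnem relations applied to the generators $s_{L_p}$ of $\ca(\I_L)$); the difference is in how the inverse is obtained. The paper gets the inverse from its machinery: the assignment $\dot{L}_a\mapsto\dot{\eL}_a$ is \emph{trivially} a well-defined representation of $\I_L$, since equal operators have equal tensor powers, so the universal property of $\ca(\I_L)$ produces a $*$-epimorphism onto $\ca(\eL)$, which the paper then identifies with $\ca_s(P)$ via Theorem~\ref{thm;Li}; the enhanced left regular representation is there precisely to absorb the well-definedness problem. You instead attack well-definedness of $\dot{L}_a\mapsto\dot{w}_a$ head-on inside $\ca_s(P)$: neutral monomials are projections depending only on their constructible ideal (by (T3) applied to $\tilde{a}a$), every $\dot{w}_a$ is a partial isometry, and the operator equality $\dot{L}_a=\dot{L}_b$ is reduced---via the criterion that partial isometries $s,t$ coincide once $s^*t=s^*s$ and $ss^*=tt^*$---to the ideal equalities $K(\tilde{a}b)=K(\tilde{a}a)$ and $K(a\tilde{a})=K(b\tilde{b})$ of neutral words, where (T2)/(T3) apply. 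Each approach buys something. Yours is completely self-contained (no Fell bundles, no enhanced representation, no Theorem~\ref{thm;Li}) and sidesteps a wrinkle in the paper's route: the universal property of $\ca(\I_L)$ naturally yields a map into the reduced-type object $\ca(\eL)=\bTl(P)$, and identifying that with $\ca_s(P)\simeq\bTu(P)$ requires either amenability of $G$ (Corollary~\ref{cor;question}) or the additional observation that $\bl$ is isometric on each fiber, so the representation lifts to $\dot{L}_a\mapsto\dot{\bt}_a$; your map lands in $\ca_s(P)$ itself. What you forgo is the paper's stated point, namely that the corollary is a formal consequence of Theorem~\ref{thm;Li}; in spirit your argument is a streamlined version of the direct inverse-semigroup computations of \cite{KKLL} and \cite{Nor14} that the paper advertises it can bypass. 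Two small things to make explicit: the convention that representations of $\I_L$, and hence $\ca(\I_L)$, annihilate the zero element of $\I_L$ when $0\in\I_L$ (you use $s_0=0$ both in verifying (T2) and in your zero case, and the statement is false under the other convention); and the implication that $\dot{L}_a=\dot{L}_b\neq 0$ forces $\dot{a}=\dot{b}$, which can be checked directly on basis vectors, with no need to invoke the coaction.
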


\begin{proof}
Note that every inverse semigroup representation of $\I_{L}$ determines a Li-covariant representation of $P$. Hence there exists a $*$-homomorphism $\rho : \ca_s(P) \rightarrow \ca(\I_{L})$ that sends generators to generators. Now the representation
\[
\dot{L}_a \longmapsto \dot{\eL}_a, \,\, a \in \W,
\]
is clearly an inverse semigroup representation of   $\I_{L}$ and so we have a map $\ca(\I_{L})\rightarrow \ca(\eL)\simeq \ca_s(P) $ who acts as the inverse of $\rho$.
\end{proof}

Here is another application of Theorem~\ref{thm;Li}. Recall that a cancellative semigroup $P$ is said to satisfy independence if for every $X \in \J$ and all $X_1, X_2, \dots, X_n \in \J$, 
\[
X= \cup_{i=1}^n X_i
\]
implies that $X=X_i$ for some $i=1,2,\dots,n$. In \cite[Corollary 3.3]{KKLL} it was shown that if the map
\[
\ca_s(P) \ni w_p \longmapsto L_p \in \T_{\la}(P), \,\, p \in P,
\]
is an isomorphism, then $P$ satisfies independence. The proof is based on \cite[Theorem 3.2]{KKLL} and an earlier result of Norling \cite[Theorem 3.2.14]{Nor14} requiring inverse semigroup theory. Here is a proof that avoids that theory.

\begin{corollary}[Corollary 3.3 of \cite{KKLL}]
Let $P$ be a submonoid of a group $G$. If the map 
\begin{equation} \label{eq;quick}
\ca_s(P)_e \ni \dot{w}_a \longmapsto \dot{L}_a \in \T_{\la}(P)_e, \,\, a \in \W, \dot{a}=e,
\end{equation}
is injective, then $P$ satisfies independence.
\end{corollary}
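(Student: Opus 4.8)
The plan is to prove the contrapositive: I assume that $P$ fails independence and exhibit a nonzero element of $\ca_s(P)_e$ lying in the kernel of the map \eqref{eq;quick}, which I denote $\phi$ (it is the restriction to unit fibres of the canonical $*$-homomorphism $\ca_s(P)\to\T_{\la}(P)$, $w_p\mapsto L_p$, which exists because $\{L_p\}$ is Li-covariant). So suppose there are constructible right ideals $X, X_1, \dots, X_n \in \fJ$ with $X = \bigcup_{i=1}^n X_i$ but $X \neq X_i$ for every $i$; since the union is $X$, each $X_i \subseteq X$ and hence $X_i \subsetneq X$. Because every constructible right ideal is $K(c)$ for a neutral word (for instance $pP = K((e,p,p,e))$, and $\fJ$ is generated under intersection, which is realised by concatenation), I may choose neutral words $a_0$ and $a_1, \dots, a_n$ with $K(a_0) = X$ and $K(a_i) = X_i$. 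Recalling that $K(a) = \{p \in P \mid \dot{L}_a \de_p = \de_p\}$, for a neutral word $a$ the operator $\dot{L}_a$ is exactly the diagonal projection $E_{K(a)}$ onto $\cspan\{\de_p \mid p \in K(a)\}$; these commute and satisfy $E_{K(a)}E_{K(b)} = E_{K(a)\cap K(b)}$, so concatenation realises intersection: the concatenated word $a_0 a_i$ is neutral with $K(a_0 a_i) = X \cap X_i = X_i$.

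Now set
\[
\beta := \prod_{i=1}^n \big(\dot{w}_{a_0} - \dot{w}_{a_0 a_i}\big) \in \ca_s(P)_e .
\]
Since $\phi$ is a $*$-homomorphism with $\phi(\dot{w}_a) = \dot{L}_a = E_{K(a)}$, its value on $\beta$ is $\prod_{i=1}^n (E_X - E_{X_i})$. As $X_i \subseteq X$, each factor $E_X - E_{X_i}$ is the projection onto $\cspan\{\de_p \mid p \in X \setminus X_i\}$, and these commute, so the product is the projection onto $\cspan\{\de_p \mid p \in \bigcap_{i=1}^n (X\setminus X_i)\}$. Since $\bigcap_{i=1}^n (X\setminus X_i) = X \setminus \bigcup_{i=1}^n X_i = \emptyset$, this projection is $0$. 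Hence $\phi(\beta) = 0$, i.e. $\beta \in \ker\phi$.

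It remains to check that $\beta \neq 0$, and this is precisely where the freeness recorded in Remark~\ref{rem;free} is used. If $\beta$ were zero, that remark (applied to the neutral words $a_0, a_1, \dots, a_n$) would force $\dot{w}_{a_0} = \dot{w}_{a_0 a_{i_0}}$ for some index $i_0$. Applying $\phi$ to this equality gives $E_X = \dot{L}_{a_0} = \dot{L}_{a_0 a_{i_0}} = E_{X_{i_0}}$, whence $X = X_{i_0}$, contradicting $X_{i_0} \subsetneq X$. Therefore $\beta \neq 0$, so $\phi$ has nontrivial kernel and fails to be injective; this proves the contrapositive and hence the corollary.

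The proof is short once Remark~\ref{rem;free} is in hand, and I expect no serious computational obstacle. The two points needing care are bookkeeping rather than difficulty: first, that on the diagonal $\dot{L}_a = E_{K(a)}$, so that concatenation of neutral words corresponds to intersection of ideals (giving $K(a_0 a_i) = X_i$ and the vanishing $\prod_i (E_X - E_{X_i}) = 0$); and second, the clean application of the homomorphism $\phi$ to transport the hypothetical relation $\dot{w}_{a_0} = \dot{w}_{a_0 a_{i_0}}$ down to $X = X_{i_0}$. The conceptual heart is the observation that failure of independence is exactly the set-theoretic shadow of the freeness in Remark~\ref{rem;free}: a nontrivial covering $X = \bigcup_i X_i$ produces a relation that holds in $\T_{\la}(P)_e$ but, by that remark, cannot hold in $\ca_s(P)_e$.
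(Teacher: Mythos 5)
Your proof is correct, but it takes a genuinely different route from the paper's. The paper argues in the forward direction: assuming \eqref{eq;quick} is injective, it composes the inverse of that map with the universality map for $\ca_s(P)\simeq\bTu(P)$ (Theorem~\ref{thm;Li}), with Corollary~\ref{C:firststep}, and with a restriction to a reducing subspace, thereby producing the doubling map $\dot{L}_a\mapsto \dot{L}_a\otimes\dot{L}_a$ on $\T_{\la}(P)_e$; independence is then deduced by citing \cite[Proposition 2.24(v)]{Li12} together with \cite[Corollary 2.22]{Li12}. You instead prove the contrapositive and exhibit an explicit nonzero kernel element: from a failure of independence $X=\cup_{i=1}^{n}X_i$ you choose neutral words with $K(a_0)=X$ and $K(a_0a_i)=X\cap X_i=X_i$, note that $\beta=\prod_{i=1}^{n}\big(\dot{w}_{a_0}-\dot{w}_{a_0a_i}\big)$ is sent by $\phi$ to the diagonal projection onto $X\setminus\cup_{i}X_i=\emptyset$, hence to $0$, and invoke Remark~\ref{rem;free} to see that $\beta\neq 0$ (an equality $\dot{w}_{a_0}=\dot{w}_{a_0a_{i_0}}$ would push down, via $\phi$, to $X=X_{i_0}$). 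As for what each approach buys: the paper's proof is shorter but outsources the decisive step to two external results of Li; yours is self-contained within the paper's own machinery---it needs only the canonical homomorphism $\ca_s(P)\to\T_{\la}(P)$, relation (T3), and the freeness of the unit fibre recorded in Remark~\ref{rem;free} (itself a byproduct of Lemma~\ref{L;trivial} and the proof of Theorem~\ref{thm;Li})---it bypasses Corollary~\ref{C:firststep} and \cite{Li12} entirely, and it has the added value of producing a concrete witness of non-injectivity, making explicit that a nontrivial covering of constructible ideals is exactly a relation that holds in $\T_{\la}(P)_e$ but not in $\ca_s(P)_e$.

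One small repair: your parenthetical justification that every ideal in $\fJ$ equals $K(c)$ for a neutral word $c$ (``$\fJ$ is generated under intersection'' by ideals like $pP$) is imprecise, since general constructible ideals are not intersections of principal ideals. The correct one-line reason is that for any word $a\in\W$ one has $K(a)=K(\tilde{a}a)$, where $\tilde{a}$ denotes the reversed word: indeed $\dot{L}_{\tilde{a}a}=\dot{L}_a^*\dot{L}_a$ is the diagonal projection onto the domain of $\dot{L}_a$, which is $K(a)$, and $\tilde{a}a$ is neutral. With that substitution, your argument stands as written.
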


\begin{proof}
Consider the map
\[
\T_{\la}(P)_e  \ni  \dot{L}_a \longmapsto \dot{w}_a \longmapsto  \dot{\eL}_a \longmapsto  \dot{\eL}_a \otimes  \dot{\eL}_a  \longmapsto  \dot{L}_a\otimes  \dot{L}_a, \,\, a \in \W, \dot{a}=e,
\]
where the first arrow (from the left) comes from the inverse of (\ref{eq;quick}), the second arrow comes from the universality of $\ca_s(P)\simeq \bTu(P)$, the third comes from Corollary~\ref{C:firststep} and the fourth from restricting to a reducing subspace. The conclusion follows now from combining \cite[Proposition 2.24(v)]{Li12} with \cite[Corollary 2.22]{Li12}.
\end{proof}

\section{The non-selfadjoint theory}\label{Sec;nsa}

It is natural to ask how the concepts explored in the previous sections manifest in the non-selfadjoint context. Indeed, consider the non-selfadjoint algebra $\bTl(P)^+$ generated by the image of the enhanced left regular representation of a submonoid $P$. Such an algebra is $\bTl(P)^+$ the ``enhanced" analogue of the familiar tensor algebra $\T_{\la}(P)^+$ which has been studied extensively \cite{ClD, CR19, DK20, DKKLL, KKLL, MS98, MS00}. One might ask how does $\bTl(P)^+$ compared to $\T_{\la}(P)^+$. The following answers that question.

\begin{theorem} \label{prop;identify}
Let $P$ be a submonoid of a group $G$ and let $\bTl (P)^+$ denote the non-selfadjoint algebra generated by the enhanced left regular representations $\eL$. Then $\bTl (P)^+$ is completely isometrically isomorphic to $\T_{\la} (P)^+$ via a map that sends generators to generators.
\end{theorem}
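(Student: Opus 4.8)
The plan is to produce explicit mutually inverse completely contractive homomorphisms interchanging the generators. Since $\eL_p\eL_q=\eL_{pq}$ and $L_pL_q=L_{pq}$, both algebras are closed linear spans of their generators, $\bTl(P)^+=\cspan\{\eL_p\mid p\in P\}$ and $\T_\la(P)^+=\cspan\{L_p\mid p\in P\}$, so it is enough to compare the norms of finite sums $\sum_i c_i\eL_{p_i}$ and $\sum_i c_iL_{p_i}$, together with all their matrix amplifications. One direction is immediate: the first summand $\ell^2(P)^{\otimes 1}=\ell^2(P)$ is a reducing subspace for $\eL$, so restriction to it is a unital completely contractive homomorphism $\Phi\colon\bTl(P)^+\to\T_\la(P)^+$ with $\Phi(\eL_p)=L_p$.

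For the reverse direction I would define, for each $n$, the homomorphism $\psi_n\colon\T_\la(P)^+\to B(\ell^2(P)^{\otimes n})$ determined by $L_p\mapsto L_p^{\otimes n}$, and then set $\Psi:=\loplus_{n=1}^{\infty}\psi_n$, which sends $\sum_i c_iL_{p_i}$ to $\sum_i c_i\eL_{p_i}$. Granting that every $\psi_n$ is completely contractive, $\Psi$ is a well-defined completely contractive homomorphism into $\bTl(P)^+$ with $\Psi(L_p)=\eL_p$. As $\Phi$ and $\Psi$ agree with mutually inverse assignments on generators and both are continuous homomorphisms, they are two-sided inverses of each other; two mutually inverse completely contractive maps are automatically completely isometric, which gives the theorem.

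The crux, and the step I expect to be the main obstacle, is the complete contractivity of $\psi_n$. I would establish it by decomposing $L^{\otimes n}$ into restrictions of the left regular representation to \emph{invariant} subspaces. Identify $\ell^2(P)^{\otimes n}$ with $\ell^2(P^n)$, and to a tuple $(q_1,\dots,q_n)\in P^n$ attach its shape $s=(q_1^{-1}q_2,\dots,q_1^{-1}q_n)\in G^{n-1}$, which the diagonal action $p\cdot(q_1,\dots,q_n)=(pq_1,\dots,pq_n)$ preserves since $(pq_i)^{-1}(pq_j)=q_i^{-1}q_j$. This yields $\ell^2(P^n)=\loplus_{s}\ell^2(P_s)$, where $P_s:=\{q\in P\mid qs_j\in P\ \text{for }j=2,\dots,n\}$ parametrizes the tuples of shape $s$ through their first coordinate. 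Under this identification $L_p^{\otimes n}$ carries the vector labelled $q_1\in P_s$ to the one labelled $pq_1\in P_s$, so on each summand $\psi_n$ coincides with the restriction of the left regular representation of $P$ to $\ell^2(P_s)=\cspan\{\delta_q\mid q\in P_s\}\subseteq\ell^2(P)$. The essential point is that this subspace is invariant for every $L_p$ (because $q\in P_s$ forces $pq\in P_s$), which is exactly what the non-selfadjoint algebra needs: invariance, not reducibility. Restriction to an invariant subspace is a completely contractive homomorphism, so $\psi_n(T)\cong\loplus_s T|_{\ell^2(P_s)}$ for every $T\in\T_\la(P)^+$, whence $\|\psi_n(T)\|=\sup_s\|T|_{\ell^2(P_s)}\|\le\|T\|$; applying the same decomposition to matrix amplifications gives complete contractivity, completing the argument.
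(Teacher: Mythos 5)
Your proof is correct, and its overall architecture coincides with the paper's: both build completely contractive homomorphisms $\psi_n \colon \T_{\la}(P)^+ \to B(\ell^2(P)^{\otimes n})$, $L_p \mapsto L_p^{\otimes n}$, and take their direct sum to obtain $L_p \mapsto \eL_p$, the inverse direction being controlled by the $n=1$ summand (the paper phrases this as the direct sum being isometric because it contains the identity summand; your formulation via the mutually inverse pair $\Phi, \Psi$ of complete contractions is the same observation). The genuine difference is in how the crucial step, complete contractivity and multiplicativity of $\psi_n$, is established. The paper derives it from the normal coaction $\de \colon \T_{\la}(P) \to \T_{\la}(P) \otimes \ca(G)$, $L_p \mapsto L_p \otimes u_p$, of Proposition~\ref{P:f coaction}: composing the completely isometric $\de$ with the map that compresses the second leg to $\ell^2(P) \subseteq \ell^2(G)$, which is multiplicative on the algebra generated by $\{u_p\}_{p \in P}$ precisely because $\ell^2(P)$ is invariant under each $l_p$; this gives $L_p \mapsto L_p \otimes L_p$, and general $n$ follows by iteration. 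You instead prove the estimate by a bare-hands spatial decomposition: $\ell^2(P)^{\otimes n} \cong \ell^2(P^n)$ splits along shape classes, giving $\psi_n(T) \cong \bigoplus_s T|_{\ell^2(P_s)}$ with each $\ell^2(P_s) \subseteq \ell^2(P)$ invariant (not reducing) for $\T_{\la}(P)^+$. This is the same mechanism in disguise: the paper's coaction is implemented by Fell's unitary $U(\de_p \otimes \de_g) = \de_p \otimes \de_{pg}$, and conjugating by $U^*$ and compressing to $\ell^2(P)^{\otimes 2}$ produces exactly your shape subspaces $\ell^2(P_s)$, $P_s = P \cap Ps^{-1}$. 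What your route buys is self-containedness — no group $\ca$-algebra, no coaction, no appeal to Proposition~\ref{P:f coaction}, and all $n$ are treated uniformly rather than by iterating the $n=2$ case. What the paper's route buys is economy in context: the coaction is already available, is needed throughout the paper anyway (e.g.\ Theorem~\ref{P:P cover} and Theorem~\ref{T:co-univ}), and exhibiting $L_p \mapsto L_p \otimes L_p$ as a restriction of a coaction is the form in which subsequent results (such as Corollary~\ref{below}) use it.
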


\begin{proof}
Consider the normal coaction 
\[
\de \colon \T_{\la}(P) \longrightarrow \T_\la(P)  \otimes \ca(G) : L_p \longmapsto L_p \otimes u_{p}
\]
appearing in the proof of Proposition~\ref{P:f coaction} and note that the completely contractive  map defined by
\[
\T_{\la}(P)^+  \ni L_p \xmapsto{\phantom{nn} \de \phantom{nn}} L_p \otimes u_p \xmapsto{\phantom{nnnn} \phantom{nnn}} L_p \otimes ( l_p|_{\ell^2(P)})= L_p\otimes L_p
\]
is multiplicative because $\ell^2(P)$ is invariant by all $l_p$, $p \in P$. Iterations of the above argument show that the maps 
\[
\T_{\la}(P)^+\ni L_p \xmapsto{\phantom{nnn} \phantom{nnn}}  L_p^{\otimes n}, \,\, n =3, 4, \dots
\]
are completely contractive and multiplicative. By taking a direct sum of all these maps, we produce a completely isometric map from $\T_{\la}(P)^+$ onto $\bTl (P)^+$, which sends generators to generators.
\end{proof}

The reduced $\ca$-algebra $\ca_{\la}(\I_L)$ associated with the inverse semigroup $\I_L$ (see Corollary~\ref{cor;invsem}) is the $\ca$-algebra generated by the
operators $\fl (w) : \ell^2(\I_L \backslash \{0\}) \rightarrow \ell^2(\I_{L} \backslash \{0\})$  determined by
\[
\fl (w) \de_x = 
\begin{dcases*}
\de_{wx} 
   & if  $w^*w\geq xx^*$ \\[1ex]
0 
   & otherwise,
\end{dcases*}
\]
with $w\in \I_{L}$. By \cite[Theorem 3.10]{KKLL} and Theorem \ref{thm;Li}, we have that $\bTl(P)$ is canonically isomorphic to $\ca_{\la}(\I_L)$. Therefore Theorem \ref{prop;identify} leads to the following alternative description of the tensor algebra $\T_{\la} (P)^+$.

\begin{corollary}
Let $P$ be a submonoid of a group $G$. Then the map 
\[
\T_{\la}  (P)^+ \ni L_p\longrightarrow \fl_p \in \ca_{\la}(\I_L), \,\, p \in P,
\]
extends to a completely isometric representation of $\T_{\la} (P)^+$.
\end{corollary}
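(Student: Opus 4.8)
The plan is to realize the asserted map as a composition of two completely isometric isomorphisms that are already available, so that no new norm estimate is needed. First I would pin down the generator correspondence on the inverse-semigroup side. Viewing $L_p$ as the element $\dot{L}_{(e,p)} \in \I_L$ — where the word $a=(e,p)$ satisfies $\dot{a}=e^{-1}p=p$, so that $\dot{L}_a = L_e^* L_p = L_p$ — the reduced inverse semigroup representation $\fl$ sends this generator to $\fl_p := \fl(L_p)$. Hence the target in the statement is precisely the non-selfadjoint subalgebra of $\ca_{\la}(\I_L)$ generated by $\{\fl_p\}_{p \in P}$, and the problem becomes one of transporting $\T_{\la}(P)^+$ onto this subalgebra.

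Next I would invoke the two isomorphisms in turn. By Theorem~\ref{prop;identify}, the map $L_p \mapsto \eL_p$ extends to a completely isometric isomorphism of $\T_{\la}(P)^+$ onto $\bTl(P)^+$. On the selfadjoint side, the discussion preceding the corollary records — via \cite[Theorem 3.10]{KKLL} together with Theorem~\ref{thm;Li} — that $\bTl(P)$ is canonically $*$-isomorphic to $\ca_{\la}(\I_L)$ through a map sending $\dot{\eL}_a \mapsto \fl(\dot{L}_a)$; in particular $\eL_p \mapsto \fl_p$. Restricting this $*$-isomorphism to the non-selfadjoint subalgebra $\bTl(P)^+$ yields a completely isometric isomorphism onto the non-selfadjoint subalgebra of $\ca_{\la}(\I_L)$ generated by $\{\fl_p\}$, since the restriction of a $*$-isomorphism between $\ca$-algebras to an arbitrary (not necessarily selfadjoint) subalgebra is automatically completely isometric.

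Composing these two completely isometric maps produces a completely isometric homomorphism $\T_{\la}(P)^+ \to \ca_{\la}(\I_L)$ carrying $L_p$ to $\fl_p$, which is exactly the claimed extension. I do not anticipate a genuine obstacle: the entire content is packaged into the two cited identifications, and the only point requiring a moment of care is the bookkeeping that the canonical identification $\bTl(P) \cong \ca_{\la}(\I_L)$ matches the generator $\eL_p$ with $\fl_p$ rather than with some other word. This follows from the explicit form of that identification on the inverse semigroup $\I_L = \{\dot{L}_a \mid a \in \W\}$, under which the unit-fiber generators $\dot{\eL}_a$ of $\bTl(P)$ correspond to the basis operators $\fl(\dot{L}_a)$ of $\ca_{\la}(\I_L)$.
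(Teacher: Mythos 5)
Your proof is correct and follows essentially the same route as the paper: the corollary is obtained there precisely by composing the completely isometric isomorphism $\T_{\la}(P)^+ \simeq \bTl(P)^+$ of Theorem~\ref{prop;identify} with (the restriction to the non-selfadjoint part of) the canonical $*$-isomorphism $\bTl(P) \simeq \ca_{\la}(\I_L)$ furnished by \cite[Theorem 3.10]{KKLL} together with Theorem~\ref{thm;Li}. Your extra bookkeeping identifying $L_p$ with $\dot{L}_{(e,p)} \in \I_L$ so that the canonical isomorphism carries $\eL_p$ to $\fl_p$ is accurate and merely makes explicit what the paper leaves implicit.
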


We can now use our theory to see that in contrast with its selfadjoint counterpart, $\T_{\la}(P)^+$ admits a semigroup comultiplication for an arbitrary submonoid $P$. This not only extends \cite[Corollary 5.5]{ClD} but also makes the RFD-coaction theory developed in \cite{ClD} available for all submonoids, not just the ones satisfying independence. We will have to say more about this theory shortly.

\begin{corollary} \label{below}
Let $P$ be a submonoid of a group $G$. Then there is a completely isometric comultiplication $\De_P$ on $\T_{\la}(P)^+$ given by $\De_P (L_p)=L_p\otimes L_p$, $p \in P$.
\end{corollary}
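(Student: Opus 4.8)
The plan is to transport the self-absorption of the enhanced left regular representation established in Corollary~\ref{C:firststep} back to $\T_{\la}(P)^+$ through the identification of Theorem~\ref{prop;identify}. Write $\Phi\colon \T_{\la}(P)^+ \to \bTl(P)^+$ for the completely isometric isomorphism of Theorem~\ref{prop;identify}, so that $\Phi(L_p)=\eL_p$ for all $p\in P$, and let $\Phi^{-1}$ be its inverse. By Corollary~\ref{C:firststep}, the $*$-isomorphism $\bTl(P)\simeq \ca(\eL\otimes\eL)$ restricts on the non-selfadjoint subalgebra to a completely isometric homomorphism
\[
\Psi\colon \bTl(P)^+ \longrightarrow \bTl(P)\otimes\bTl(P); \quad \eL_p\longmapsto \eL_p\otimes\eL_p,
\]
where I use the inclusion $\ca(\eL\otimes\eL)\subseteq \bTl(P)\otimes\bTl(P)$ recorded after Proposition~\ref{P:f coaction}. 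Since $\bTl(P)^+$ is an algebra, every product $\prod_i(\eL_{p_i}\otimes\eL_{p_i})=(\prod_i\eL_{p_i})\otimes(\prod_i\eL_{p_i})$ lies in $\bTl(P)^+\otimes\bTl(P)^+$, so the image of $\Psi$ is contained in $\bTl(P)^+\otimes\bTl(P)^+$.

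I would then set $\De_P:=(\Phi^{-1}\otimes\Phi^{-1})\circ\Psi\circ\Phi$. Here $\Phi^{-1}\otimes\Phi^{-1}$ is a complete isometry on the minimal operator-space tensor product, because the minimal tensor product is injective and the tensor product of two complete isometries is again a complete isometry; it sends $\eL_p\otimes\eL_q\mapsto L_p\otimes L_q$. Consequently $\De_P$ is a composition of completely isometric homomorphisms, hence a completely isometric homomorphism $\T_{\la}(P)^+ \to \T_{\la}(P)^+\otimes\T_{\la}(P)^+$, and tracing a generator gives
\[
L_p \xmapsto{\ \Phi\ } \eL_p \xmapsto{\ \Psi\ } \eL_p\otimes\eL_p \xmapsto{\ \Phi^{-1}\otimes\Phi^{-1}\ } L_p\otimes L_p,
\]
so that $\De_P(L_p)=L_p\otimes L_p$ as required. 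Finally I would verify coassociativity: both $(\De_P\otimes\id)\De_P$ and $(\id\otimes\De_P)\De_P$ are completely contractive homomorphisms of $\T_{\la}(P)^+$ that send each generator $L_p$ to $L_p\otimes L_p\otimes L_p$, and since $\{L_p\}_{p\in P}$ generates $\T_{\la}(P)^+$ the two maps agree, so $\De_P$ is a genuine comultiplication.

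The conceptual point, and the only real obstacle, is that one cannot define $\De_P$ directly by $L_p\mapsto L_p\otimes L_p$ at the level of $\ca$-algebras, since the left regular representation of $P$ need not be self-absorbing when $P$ fails independence (this is exactly the pathology highlighted in the Introduction). The detour through the enhanced representation $\eL$, where self-absorption always holds by Corollary~\ref{C:firststep}, is what makes the construction work, and passing back to $\T_{\la}(P)^+$ is legitimate precisely because Theorem~\ref{prop;identify} supplies a complete isometry rather than a mere $\ca$-isomorphism. The one technical point to keep straight is that all the tensor products occurring are the minimal (spatial) ones, so that injectivity may be invoked for $\Phi^{-1}\otimes\Phi^{-1}$ and the various generated subalgebras sit inside the expected tensor products.
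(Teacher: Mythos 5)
Your proposal is correct and follows essentially the same route as the paper: the paper also defines $\De_P:=(\psi^{-1}\otimes\psi^{-1})\circ\phi\circ\psi$, where $\psi$ is the complete isometry of Theorem~\ref{prop;identify} and $\phi$ is the self-absorption isomorphism of Corollary~\ref{C:firststep}. The extra details you supply (that the image of $\Psi$ lands in $\bTl(P)^+\otimes\bTl(P)^+$, that $\Phi^{-1}\otimes\Phi^{-1}$ is completely isometric on the spatial tensor product, and the coassociativity check on generators) are exactly the points the paper leaves implicit, so nothing needs to change.
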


\begin{proof}
Let $\psi: \T_\la(P)^+ \rightarrow  \bTl(P)^+$ be the isomorphism of Theorem~\ref{prop;identify} and let 
\[
\phi: \bTl(P) \rightarrow \bTl(P) \otimes \bTl(P);\eL_p\longmapsto \eL_p\otimes \eL_p, \,\, p \in P,
\] be the map of Corollary~\ref{C:firststep}. Then the map $\De_P:= (\psi^{-1}\otimes \psi^{-1})\phi \psi $ is the desired comultiplication.
\end{proof}

Of course we could obtain a more general result than that of Corollary~\ref{below} by invoking our Fell's absorption principle (Theorem~\ref{thm;main2}) instead of Corollary~\ref{C:firststep} in its proof. Instead we follow a different path and we obtain a much stronger result. We begin with Theorem~\ref{thm;char} below in order to characterize for which submonoids $P$, the tensor algebra $\T_{\la}^+(P)$ admits a special kind of character. Apart from its intrinsic interest, this characterization will allow us later on to clarify one of the assumptions of \cite[Theorem 4.6]{ClD} and obtain a non-selfadjoint Fell's absorption principle (Theorem~\ref{thm;main3}). We remark that the requirement below that $P$ is to be contained in an amenable group is not as strict as it appears at first. Many highly non-amenable groups contain as semigroups submonoids of amenable groups. The most distinguished example of such a behavior is  that of the free semigroup on $n$-generators, which is contained in an amenable (actually solvable) group. (See \cite[pg 238]{Lib} for more information.)

\begin{theorem} \label{thm;char}
If $P$ is a submonoid of a group $G$, then the following are equivalent
\begin{itemize} 
\item[\textup{(i)}] $\T_{\la}(P)^+$ admits a character $\omega$ satisfying $\om(L_p)=1$, for all $p \in P$.
\item[\textup{(ii)}] $P$ is left reversible, i.e., $pP\cap qP\neq \emptyset$ for any $p, q \in P$, and it embeds in an amenable group.
\end{itemize}
\end{theorem}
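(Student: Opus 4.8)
The plan is to reduce the entire statement to a single scalar inequality about the reduced norm, and then to recognize that inequality as Hulanicki's amenability criterion transported from the group of fractions of $P$ down to $P$ itself.

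First I would record a reformulation of (i). Since $\T_{\la}(P)^+$ is the norm closure of $\mathcal{A}_0 := \spn\{L_p : p \in P\}$ and $L_pL_q = L_{pq}$, every element of $\mathcal{A}_0$ is a \emph{finite} combination $\sum_p c_p L_p$, and the $L_p$ are linearly independent (evaluate at $\delta_e$). On $\mathcal{A}_0$ the functional $\sum_p c_p L_p \mapsto \sum_p c_p$ is automatically multiplicative, because $(\sum_p c_p L_p)(\sum_q d_q L_q)=\sum_{p,q}c_pd_q L_{pq}$ and $\sum_{p,q}c_pd_q=(\sum_p c_p)(\sum_q d_q)$. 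Hence a character $\om$ with $\om(L_p)=1$ for all $p$ exists \emph{iff} this functional is bounded, i.e.
\[
\Big| \sum_p c_p \Big| \leq \Big\| \sum_p c_p L_p \Big\|_{B(\ell^2(P))}
\]
for every finitely supported $(c_p)_{p\in P}\subseteq\bC$: a character on a unital Banach algebra is automatically contractive (forward direction), and conversely boundedness lets one extend the functional continuously to $\T_{\la}(P)^+$, continuity of multiplication preserving multiplicativity. So the theorem becomes: the displayed inequality holds iff $P$ is left reversible and embeds in an amenable group.

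The technical heart is a norm identity for left reversible $P$. Being a submonoid of a group, $P$ is cancellative, so Ore's theorem yields the group of right fractions $G_P=PP^{-1}$ with $P\hookrightarrow G_P$. Let $\{l_g\}$ and $\{r_g\}$ be the left and right regular representations of $G_P$ on $\ell^2(G_P)$. Since $\ell^2(P)\subseteq\ell^2(G_P)$ is invariant under each $l_p$ with $l_p|_{\ell^2(P)}=L_p$, it is invariant under $T:=\sum_p c_p l_p$, giving $\|\sum_p c_p L_p\|\le\|T\|$. For the reverse I would use the right translates $H_t:=r_t\ell^2(P)=\ell^2(Pt^{-1})$, $t\in P$. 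As $r$ commutes with $l$, each $H_t$ is $T$-invariant and $T|_{H_t}$ is unitarily equivalent to $T|_{\ell^2(P)}$, so $\|T|_{H_t}\|=\|\sum_p c_p L_p\|$. Left reversibility makes $\{H_t\}_{t\in P}$ \emph{directed}: given $s,t$ pick $r\in sP\cap tP$, and then $H_s,H_t\subseteq H_r$; and $\bigcup_t Pt^{-1}=G_P$, so $\bigcup_t H_t$ is dense. An increasing family of $T$-invariant subspaces with dense union satisfies $\|T\|=\sup_t\|T|_{H_t}\|$, whence
\[
\Big\| \sum_p c_p L_p \Big\|_{B(\ell^2(P))} = \Big\| \sum_p c_p l_p \Big\|_{B(\ell^2(G_P))}.
\]
This identity — that the reduced norm of $G_P$ is already visible on $\ell^2(P)$ — is where left reversibility is indispensable, and I expect the directed-family argument to be the step that most needs care.

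With this in hand, (ii)$\Rightarrow$(i) is quick: if $P\hookrightarrow H$ with $H$ amenable, then $G_P=\langle P\rangle\subseteq H$ is amenable, so by Hulanicki's theorem $|\sum_p c_p|\le\|\sum_p c_p l_p\|_{B(\ell^2(G_P))}$, and the norm identity rewrites the right side as $\|\sum_p c_p L_p\|$ — exactly the inequality of the reformulation. For (i)$\Rightarrow$(ii) I would first get left reversibility: if $pP\cap qP=\emptyset$ then $L_p,L_q$ have orthogonal ranges, $L_p^*L_q=0=L_q^*L_p$, so $(L_p+L_q)^*(L_p+L_q)=2I$ and $\|L_p+L_q\|=\sqrt2$, whereas $c_p=c_q=1$ would force $2\le\sqrt2$. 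Thus $G_P$ exists, and it remains to see it is amenable, which I would obtain by bootstrapping the $P$-inequality to all of $G_P$: for finitely supported $c$ on $G_P$ with support $\{g_1,\dots,g_n\}$, the Ore condition gives $t\in\bigcap_i b_i P$ so that each $g_i t\in P$; since $l_t$ is unitary, $\|\sum_g c_g l_g\|=\|\sum_g c_g l_{g t}\|$, a combination supported on $P$ with the same coefficient sum, and the norm identity together with the $P$-inequality yields $|\sum_g c_g|\le\|\sum_g c_g l_g\|$ for all such $c$. That is Hulanicki's criterion, so $G_P$ is amenable and $P$ embeds in it.
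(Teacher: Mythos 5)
Your proposal is correct, but it takes a genuinely different route from the paper's proof. For (i)$\Rightarrow$(ii), the paper extends $\om$ to a positive linear form $\hat{\om}$ on the C*-algebra $\T_{\la}(P)$ and uses Choi--Paulsen multiplicative domain arguments to show $\hat{\om}$ is multiplicative, concluding that $P$ is \emph{left amenable}; left reversibility and embeddability in an amenable group are then quoted from Paterson's book. You instead obtain left reversibility by the elementary orthogonal-ranges computation and prove amenability of the Ore group of fractions $G_P$ directly, by right-translating an arbitrary finitely supported element of $\bC[G_P]$ into one supported on $P$ and invoking Hulanicki's criterion. For (ii)$\Rightarrow$(i), the paper stays entirely inside its own machinery: it replaces $\T_{\la}(P)^+$ by $\bTl(P)^+$ (Theorem~\ref{prop;identify}), identifies $\bTu(P)$ with Li's algebra $\ca_s(P)$ (Theorem~\ref{thm;Li}), uses amenability of the ambient group to get $\bTl(P)=\ca_s(P)$, and then checks that the trivial representation $p\mapsto 1$ is Li-covariant, with left reversibility entering only to verify (T2), i.e., $K(a)\neq\emptyset$ for neutral words. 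You instead prove the norm identity $\|\sum_p c_p L_p\|_{B(\ell^2(P))}=\|\sum_p c_p l_p\|_{B(\ell^2(G_P))}$ via the directed family $\ell^2(Pt^{-1})$, $t\in P$, of right translates --- this is where left reversibility enters for you, and the directed-union argument is sound --- and then quote weak containment of the trivial representation in the regular one for amenable groups. The trade-off: the paper's route shows more along the way (the character extends to a $*$-character of the full Toeplitz C*-algebra, so (i) is in fact equivalent to left amenability of $P$) and showcases the structural results of the paper, while your route is self-contained modulo two classical theorems (Ore and Hulanicki), avoids Fell bundles, Li covariance and multiplicative domains entirely, and your norm identity --- that the reduced norm of $G_P$ on elements supported in $P$ is already attained on $\ell^2(P)$ --- is a clean lemma of independent interest. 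One small point you should make explicit: in (ii)$\Rightarrow$(i), identifying the abstract group of fractions $G_P$ with the subgroup $PP^{-1}$ of the ambient amenable group uses the uniqueness clause of Ore's theorem; alternatively you can simply work with $PP^{-1}$ inside that group and never mention the abstract $G_P$.
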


\begin{proof}
Assume first that $\T_{\la}(P)^+$ admits such a character with $\om(L_p)=1$, for all $p \in P$. Extend $\om$ to a positive linear form $\hat{\om}$ on $\T_{\la}(P)$ and notice that for any $p \in P$, we have 
\[
\hat{\om}(L_p^*)\hat{\om}(L_p)= \hat{\om}(L_p^*L_p) =1
\]
and so all $L_p$, $p \in P$, belong to the right multiplicative domain of $\hat{\om}$ (\cite[pg 39]{Pau02}). According to \cite[Theorem 3.18(i)]{Pau02} 
\[
\{a \in \T_{\la}(P)\mid \hat{\om}(a)\hat{\om}(a)= \hat{\om}(a^*a) \}= \{a \in \T_{\la}(P)\mid \hat{\om}(b)\hat{\om}(a)= \hat{\om}(ba), \mbox{ for all } b \in \T_{\la}(P) \}
\]
and so with $a =L_p$ and $b=L_pL_p^*$ in the above, we obtain
\[
1= \hat{\om}(L_p) = \hat{\om}((L_p L_p^* ) L_p) = \hat{\om}(L_p L_p^* )\hat{\om}(L_p) = \hat{\om}(L_p L_p^* ).
\]
Hence all $L_p$, $p \in P$ belong to the left multiplicative domain of $\hat{\om}$. Therefore $\hat{\om}$ is a multiplicative form on $\T_{\la}(P)$ and so $P$ is left amenable. By \cite[Proposition (1.25)]{Pat} $P$ is left reversible and by \cite[Proposition (1.27)]{Pat} $P$ embeds in an amenable group.

Conversely assume that $P$ embeds in an amenable group $G'$. Theorem~\ref{prop;identify} implies that it is enough to show that $\bTl(P)^+$ admits such a character. We will show instead that $\ca_s(P)$ admits a character and this will suffice since the amenability of $G'$ implies that $ \bTl(P)=\ca_s(P)$ and so  $\bTl(P)^+ \subseteq \bTl(P)$ inherits that character.

Consider the semigroup representation $P \ni p \mapsto 1\in \bC$. This representation obviously satisfies the defining relations (T1) and (T3) for $\ca_s(P)$. It also satisfies (T2) because when $P$ is left reversible, then $K(a)\neq \emptyset$ for any non-emty neutral word $a \in \W$. This is a well known fact but we sketch a proof for completeness. 

Indeed it suffices to show that for any word $b \in W$ and $p \in P$, we have $\dot{L_b}L_p\neq 0$ (or equivalently, $\dot{L_b}L_pL_p^*\neq 0$), provided that $\dot{L_b} \neq 0$. Now $\dot{L_b}L_pL_p^*\neq 0$ means that its domain projection has to be non-zero, i.e., $K(b(e,p)(p, e))\neq \emptyset$. Let $q \in K(b)$, which is non-empty by assumption, and so $qP\subseteq K(b)$. But then \cite[Proposition 2.6(5)]{LaS} implies that
\[
K(b(e,p)(p, e))=K(b)\cap K((e,p)(p,e)) \supseteq qP\cap pP\neq \emptyset ,
\]
because $P$ is left reversible and we are done.

Since the semigroup representation $P \ni p \mapsto 1\in \bC$ satisfies all the defining relations of $\ca_s(P)$, it induces a representation on $\bC$, i.e., a character for $\ca_s(P)$, as desired.
\end{proof}

Initial motivation for our Theorem~\ref{thm;char} came from one of the main results of \cite{ClD}, Theorem~4.6, which has as one of its assumptions that the tensor algebra $\T_{\la}(P)^+$ admits a character $\omega$ satisfying $\om(L_p)=1$, for all $p \in P$. Theorem~\ref{thm;char} clarifies now that the presence of such a character $\om$ is equivalent to $P$ being left reversible and embedding in an amenable group. Here is a more significant application of Theorem~\ref{thm;char}.

\begin{theorem}[Fell's absorption principle for semigroups; non-selfadjoint version] \label{thm;main3}
Let $P$ be a submonoid of a group $G$, let $L$ be the left regular representation of $P$ and let $V= \{ V_p\}_{p \in P}$ be a representation of $P$ by isometries on Hilbert space $\H$. Consider
\begin{itemize}
\item[\textup{(i)}] The representation $V= \{ V_p\}_{p \in P}$ extends to a completely contractive representation of $\T_{\la}(P)^+$.
\item[\textup{(ii)}] The map
\begin{equation*} 
\T_{\la}(P)^+ \ni L_p \longmapsto L_p \otimes V_p,  \,\, p \in P,
\end{equation*}
extends to a completely isometric representation of $\T_{\la}(P)^+$.
\end{itemize}
Then \textup{(i)} $\implies $ \textup{(ii)}. If $P$ is left reversible and $G$ amenable, then  \textup{(ii)} $\implies$  \textup{(i)}.
\end{theorem}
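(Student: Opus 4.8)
The plan is to treat the two implications separately, using the comultiplication of Corollary~\ref{below}, an explicit intertwining isometry for the lower bound, and the character supplied by Theorem~\ref{thm;char} for the converse.

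For \textup{(i)}$\implies$\textup{(ii)}, write $\sigma\colon\T_{\la}(P)^+\to B(\H)$ for the completely contractive homomorphism extending $V$, so that $\sigma(L_p)=V_p$. I would first note that the map $\Phi\colon L_p\mapsto L_p\otimes V_p$ is multiplicative on generators (because $V$ is an isometric semigroup representation) and factors as $\Phi=(\id\otimes\sigma)\circ\De_P$, where $\De_P(L_p)=L_p\otimes L_p$ is the completely isometric comultiplication of Corollary~\ref{below} and $\id\otimes\sigma$ is completely contractive on the spatial tensor product. Hence $\Phi$ is a completely contractive homomorphism, which is the upper bound $\|\Phi^{(n)}\|\le 1$.

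The reverse bound is where the real content sits, and it holds for \emph{every} isometric representation $V$. Fixing a unit vector $\xi\in\H$, I would define $W\colon\ell^2(P)\to\ell^2(P)\otimes\H$ by $W\de_q=\de_q\otimes V_q\xi$. Since each $V_q$ is an isometry and $\{\de_q\}_{q\in P}$ is orthonormal, the images $\{\de_q\otimes V_q\xi\}$ are orthonormal, so $W$ is an isometry. A one-line computation on basis vectors, $(L_r\otimes V_r)W\de_q=\de_{rq}\otimes V_{rq}\xi=W L_r\de_q$, yields the intertwining relation $\Phi(x)W=Wx$ for all $x\in\T_{\la}(P)^+$; amplifying by $W^{\oplus n}$ the same relation holds entrywise for matrices. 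As $W$ is isometric this gives $\|[x_{ij}]\|\le\|[\Phi(x_{ij})]\|$, so $\Phi$ is completely isometric. Thus hypothesis \textup{(i)} is used only to secure the upper bound, while the lower bound is automatic.

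For \textup{(ii)}$\implies$\textup{(i)} under the extra hypotheses, I would exploit that $P$ embeds in the amenable group $G$, so by Theorem~\ref{thm;char} the algebra $\T_{\la}(P)^+$ carries a character $\om$ with $\om(L_p)=1$ for all $p\in P$. Composing the completely isometric map of \textup{(ii)} with the completely contractive homomorphism $\om\otimes\id\colon\T_{\la}(P)^+\otimes B(\H)\to B(\H)$ slices the character off the first leg and sends $L_p\mapsto\om(L_p)V_p=V_p$. The composition is a completely contractive homomorphism extending $V$, which is exactly \textup{(i)}. The main obstacle is conceptual rather than computational: recognizing that the intertwining isometry $W$ forces the map to be bounded below irrespective of $V$, so the two hypotheses decouple into an ``upper bound'' coming from absorption in \textup{(i)} and a ``slicing character'' coming from left reversibility and amenability via Theorem~\ref{thm;char} for the converse.
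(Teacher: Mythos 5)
Your proposal is correct and follows essentially the same route as the paper: the implication \textup{(i)}$\implies$\textup{(ii)} is obtained by factoring the map through the comultiplication $\De_P$ of Corollary~\ref{below} for the upper bound and by an intertwining isometry for the lower bound (your $W\de_q=\de_q\otimes V_q\xi$ is just the paper's intertwiner $W(\de_p\otimes h)=\de_p\otimes V_ph$ compressed to the slice $\ell^2(P)\otimes\bC\xi$), while \textup{(ii)}$\implies$\textup{(i)} uses the character from Theorem~\ref{thm;char} to slice off the first tensor leg exactly as in the paper.
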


\begin{proof}
Assume that we have a completely contractive representation 
\[
\pi_{V} \colon \T_{\la}(P)^+\longrightarrow B(\H): \, L_p\longmapsto  V_p, \,\, p \in P.
\]
If $\Delta_P$ is the comultiplication of Corollary~\ref{below}, then the completely contractive map $\phi:=(\id\otimes \pi_V)\circ\Delta_P$ indeed satisfies $\phi(L_p)= L_p \otimes V_p$, for all $p \in P$. 
We need to verify now that $\phi$ is completely isometric, not just a complete contraction. For that, we use a familiar trick.

Consider the isometry 
\[
W \colon \ell^2(P)\otimes \H \longrightarrow \ell^2(P)\otimes \H;\,   \de_p \otimes V_qh \longmapsto   \de_p \otimes V_{pq}h, \,\, p,q \in P, h \in \H,
\]
where $\{\de_p\}_{p \in P}$ is the canonical orthonormal basis of $\ell^2(P)$. It is easy to verify that 
\[
W(L_p\otimes I)=(L_p\otimes V_p)W, \,\, \mbox{for all } p \in P,
\]
by applying the left and right sides of the above equation on a vector of the form $\de_s \otimes V_t h$. Therefore
\begin{equation*}
\begin{split} 
\|L_p\|&= \|W(L_p\otimes I)\|  \\
		&=\|(L_p\otimes V_p)W\| \\
		&\leq \| L_p\otimes V_p\| =\|\phi(L_p)\|
	\end{split}
\end{equation*}
and so $\phi$ is an isometry. A matricial analogue of the above argument establishes that $\phi$ is  a complete isometry and we are done.

Assume now that $P$ is left reversible, $G$ is amenable and that the map
\begin{equation*} 
\T_{\la}(P)^+ \ni L_p \longmapsto L_p \otimes V_p,  \,\, p \in P,
\end{equation*}
extends to a completely isometric representation $\phi$ of $\T_{\la}(P)^+$. Theorem~\ref{thm;char} implies now the existence of 
a character $\omega$ satisfying $\om(L_p)=1$, for all $p \in P$. Then the map $(\om\otimes \id)\phi$ is a completely contractive representation of $\T_{\la}(P)^+$ satisfying 
\[
(\om\otimes \id)\phi(L_p)= 1\otimes V_p \simeq V_p, \mbox{ for all } p \in P,
\]
and the conclusion follows.
\end{proof}

One can recast Theorem~\ref{thm;main3} using the language of semigroup coactions of Clouatre and Dor-On \cite{ClD}.

\begin{definition}[Clouatre and Dor-On \cite{ClD}]
Let $A$ be an operator algebra, $P$ be a cancellative discrete semigroup and let $\T_{\la}^+(P)$ denote the non-selfadjoint algebra generated by the left regular representation of $P$. A completely isometric homomorphism $\de: A \rightarrow A\otimes \T_{\la}^+(P)$ is said to be a coaction of $P$ on $A$ if the linear span of the spectral subspaces 
\[
A_p:= \{ a \in A\mid \de(a)=a\otimes L_p\}, \,\, p \in P,
\]
is norm dense in $A$.
\end{definition}

In light of the above definition, our  Theorem~\ref{thm;main3} characterizes the semigroup representations of $P$ that lead to coactions of $P$ on $\T_{\la}(P)^+$.  In \cite{ClD}, Clouatre and Dor-On make heavy use of these semigroup coactions in order to investigate questions regarding residual finite dimensionality in the context of general operator algebras. In particular they are interested when the maximal $\ca$-cover $\cmax (\A)$ of an operator algebra $\A$ is residually finite dimensional (RFD). All of their results are confined however to semigroups which are independent, the reason being that they utilize heavily the selfadjoint theory and there, as we have already mentioned, there are significant issues with the left regular representation beyond independent semigroups. 

The following result was obtained by Clouatre and Dor-On \cite{ClD} only in the case where $P$ is independent and it required an intricate proof. 

\begin{theorem} \label{thm;ClDeasy}
Let $P$ and $Q$ be submonoids of groups $G$ and $H$ respectively. Let $\phi: G\rightarrow H$ be a group homomorphism such that $\phi(P) \subseteq Q$. Then the map 
\begin{equation} \label{eq;preco}
\T_{\la}(P)^+ \ni L_p^P \longmapsto L_p^P\otimes L_{\phi(p)}^Q\in \T_{\la}(P)^+\otimes  \T_{\la}(Q)^+
\end{equation}
extends to a coaction of $Q$ on $\T_{\la}(P)^+$.
\end{theorem}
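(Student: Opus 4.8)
The plan is to reduce the claim to two facts already available in the paper: the existence of the comultiplication $\De_P$ of Corollary~\ref{below}, and the multiplicativity/complete-isometry packaging that comes from functoriality of the tensor algebra construction under the homomorphism $\phi$. First I would observe that the assignment $L_p^Q \mapsto L_{\phi(p)}^Q$ need not be well defined on $\T_{\la}(Q)^+$ directly, so rather than trying to build a single map on $\T_{\la}(Q)^+$, I would work entirely inside $\T_{\la}(P)^+$ using the enhanced picture. Concretely, I would first invoke Theorem~\ref{prop;identify} to identify $\T_{\la}(P)^+$ completely isometrically with $\bTl(P)^+$, so that it suffices to produce the claimed coaction on the enhanced algebra, where self-absorption (Corollary~\ref{C:firststep}) is available.

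The key computational step is to check that the map in \eqref{eq;preco} is a well-defined completely isometric homomorphism. For multiplicativity and complete contractivity, I would factor \eqref{eq;preco} as the composition of the comultiplication $\De_P(L_p^P) = L_p^P \otimes L_p^P$ from Corollary~\ref{below} with the map $\id \otimes \Psi$, where $\Psi \colon \T_{\la}(P)^+ \to \T_{\la}(Q)^+$ is the completely contractive homomorphism sending $L_p^P \mapsto L_{\phi(p)}^Q$. The existence of $\Psi$ is exactly the functoriality point: because $\phi(P) \subseteq Q$ and $\phi$ is a homomorphism, the representation $p \mapsto L_{\phi(p)}^Q$ of $P$ is an isometric representation, and one can realize it as a compression of the left regular representation of $P$ on a suitable invariant subspace, which makes $\Psi$ completely contractive and multiplicative by a Fejér/compression argument analogous to the one in the proof of Theorem~\ref{prop;identify}. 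Complete isometry of the full map \eqref{eq;preco} then follows from the Fell-absorption trick used in the proof of Theorem~\ref{thm;main3}: one builds the isometry $W$ on $\ell^2(P) \otimes \ell^2(Q)$ intertwining $L_p^P \otimes I$ with $L_p^P \otimes L_{\phi(p)}^Q$, giving $\|L_p^P\| \le \|L_p^P \otimes L_{\phi(p)}^Q\|$, and passes to matrices.

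Finally I would verify the coaction axioms. The homomorphism property and complete isometry are handled above; it remains to check the density of the spans of the spectral subspaces and the appropriate comultiplication identity. For the spectral-subspace condition, I would note that the grading of $\T_{\la}(P)^+$ by $P$ (in which $L_p^P$ sits in degree $\phi(p)$) has dense span of its homogeneous components because the monomials $\dot{L}_a^P$ already span a dense subalgebra, and each generator $L_p^P$ lands in the spectral subspace indexed by $L_{\phi(p)}^Q$; so $A_{L_q^Q} \supseteq \cspan\{L_p^P \mid \phi(p) = q\}$ and these together are dense. The coassociativity-type identity matching the comultiplication on $\T_{\la}(Q)^+$ is a routine check on generators, $(\id \otimes \De_Q)\circ \de = (\de \otimes \id)\circ \de$, both sides sending $L_p^P \mapsto L_p^P \otimes L_{\phi(p)}^Q \otimes L_{\phi(p)}^Q$.

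The main obstacle I expect is establishing that $\Psi \colon L_p^P \mapsto L_{\phi(p)}^Q$ is genuinely a completely contractive homomorphism \emph{at the level of the tensor algebras}, since $\phi$ may be far from injective and $\phi(p)=e$ can occur for $p \neq e$; one cannot simply quote a universal property of $\T_{\la}(P)^+$ because the Toeplitz/left-regular algebra is a reduced object with no such universal property for arbitrary isometric representations. The resolution is to realize $p \mapsto L_{\phi(p)}^Q$ as a compression of an ampliation of the left regular representation of $P$ itself, exactly as $\eL$ was used to route around the failure of self-absorption, so that complete contractivity is automatic from Theorem~\ref{prop;identify} rather than from any universal property. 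This is the one place where the enhanced-representation machinery developed earlier does the real work, and keeping the argument inside that framework is what makes the proof go through for \emph{all} submonoids $P$, not merely the independent ones treated in \cite{ClD}.
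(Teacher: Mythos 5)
Your outer skeleton (factor the map through the comultiplication $\De_P$ of Corollary~\ref{below}, get a lower norm bound from an intertwining isometry $W$, then check density of the spectral subspaces) parallels the paper's proof, but the linchpin of your argument is false: the map $\Psi \colon \T_{\la}(P)^+ \to \T_{\la}(Q)^+$, $L_p^P \mapsto L_{\phi(p)}^Q$, need not exist as a completely contractive homomorphism, and your proposed resolution cannot repair this. Take $P=\bF_2^+$, the free monoid on generators $a,b$ inside $G=\bF_2$, and let $H=Q=\{e\}$ with $\phi$ the trivial homomorphism. Then $\T_{\la}(Q)^+=\bC$ and $\Psi$ would be exactly a character $\om$ of $\T_{\la}(P)^+$ with $\om(L_p)=1$ for all $p\in P$. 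By Theorem~\ref{thm;char} such a character exists only if $P$ is left reversible, and $\bF_2^+$ is not (indeed $a\bF_2^+\cap b\bF_2^+=\emptyset$); concretely, $(L_a+L_b)^*(L_a+L_b)=2I$ gives $\|L_a+L_b\|=\sqrt{2}$, while a contractive character with $\om(L_a)=\om(L_b)=1$ would force $\|L_a+L_b\|\geq 2$. Note that the conclusion of Theorem~\ref{thm;ClDeasy} is untouched in this example --- the map \eqref{eq;preco} is just $L_p\mapsto L_p\otimes 1$ --- but your factorization $(\id\otimes\Psi)\circ\De_P$ collapses. The same example kills your ``resolution'': if $p\mapsto L^Q_{\phi(p)}$ could be realized as a compression of an ampliation of $L^P$ to a suitable (co)invariant subspace, that realization would make $\Psi$ completely contractive, contradicting the above. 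So the obstacle you flagged in your last paragraph is not merely a technical worry; it is a genuine obstruction, and flagging it does not dispose of it. (Your $W$-intertwining step only yields the lower bound $\|x\|\leq\|\phi(x)\|$ on polynomials; it cannot supply the missing upper bound, i.e., well-definedness and contractivity.)

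The paper's proof threads this needle by never detaching the second tensor leg from the first. It composes the normal coaction $\de\colon L_p^P\mapsto L_p^P\otimes u_p$ into $\T_{\la}(P)\otimes\ca(G)$ with $\id\otimes\pi_\phi$, where $\pi_\phi\colon\ca(G)\to\ca(H)$, $u_g\mapsto u_{\phi(g)}$, exists by the universal property of the \emph{full} group C*-algebra --- this is where the non-injectivity of $\phi$ (your $\phi(p)=e$ for $p\neq e$ scenario) is harmlessly absorbed, since it is an obstruction only at the reduced/tensor-algebra level --- and then compresses the second leg to $\ell^2(Q)\subseteq\ell^2(H)$, which is multiplicative because $\ell^2(Q)$ is invariant under $l_q$, $q\in Q$. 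This produces the completely contractive homomorphism $\de_\phi\colon L_p^P\mapsto L_p^P\otimes L^Q_{\phi(p)}$ directly, with no intermediate $\Psi$. Since each $\de_\phi(L_p^P)$ is an isometry, Theorem~\ref{thm;main3} (i)$\implies$(ii) applied to $V_p:=L_p^P\otimes L^Q_{\phi(p)}$ shows that $L_p^P\mapsto L_p^P\otimes L_p^P\otimes L^Q_{\phi(p)}$ is completely isometric, and composing with the complete isometry $L_p^P\otimes L_p^P\mapsto L_p^P$ of Corollary~\ref{below}, tensored with the identity on the last leg, gives that \eqref{eq;preco} is completely isometric. Your density check for the spectral subspaces then goes through essentially as you wrote it. In short: keep your outer structure, but replace the nonexistent $\Psi$ by the route through $\ca(G)$ and $\pi_\phi$.
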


\begin{proof} 
First notice that the group homomorphism $\phi: G\rightarrow H$ induces now a $*$-homo-\break morphism 
\[
\pi_{\phi}: \ca(G) \rightarrow \ca(H); u_g \longmapsto u_{\phi(g)}
\]
Consider again the normal coaction 
\[
\de \colon \T_{\la}(P) \longrightarrow \T_\la(P)  \otimes \ca(G) : L_p^P \longmapsto L_p^P \otimes u_{p}
\]
appearing in the proof of Proposition~\ref{P:f coaction} and note that the completely contractive  map $\de_{\phi}$ defined on $\T_{\la}(P)^+$ by 
\[
\T_{\la}(P)^+\ni L_p^P \xmapsto{\phantom{nn} \de \phantom{nn}} L_p^P \otimes u_p \xmapsto{\phantom{nn} \id \otimes \pi_{\phi} \phantom{nn}} L_p^P\otimes u_{\phi(g)} \xmapsto{\phantom{nn} \phantom{nn}} L_p^P  \otimes ( l_{\phi(p)}|_{\ell^2(Q)})= L_p^P\otimes L_{\phi(p)}^Q
\]
is multiplicative because $\ell^2(Q) \subseteq \ell^2(H)$ is invariant by all $l_q \in \ca_r(H)$, $q \in Q$. Since each $\de_{\phi}(L_p)$, $p \in P$, is an isometry, Theorem~\ref{thm;main3} implies that the map 
\begin{equation*}
\T_{\la}(P)^+\ni L_p^P \xmapsto{\phantom{nmn}}  L_p^P\otimes \de_{\phi}(L_p) = L_p^P\otimes L_p^P \otimes L_{\phi(p)}, \,\, p \in P,
\end{equation*}
extends to a completely isometric representation of $\T_{\la}(P)^+$. However Corollary~\ref{below} shows that the map 
\[
\T_{\la}(P)^+\otimes \T_{\la}(P)^+\  \ni L^P_p  \otimes L^P_p  \ \longmapsto L^P_p , \,\,  p \in P,
\]
extends to a completely isometric map and the conclusion follows by composing the maps above.
\end{proof}

We can now obtain a strengthening of one of the main results in \cite{ClD}. Let $P$ and $Q$ be submonoids of groups $G$ and $H$ respectively. Clouatre and Dor-On define a homomorphism $\phi :G \rightarrow H$ as a $(P,Q)$-map if $\phi(P)\subseteq Q$ and the set $\phi^{-1}(q)\cap P$ is finite for any $q \in Q$. Maps with similar properties have been considered many times before, starting with \cite{LR96} where they were called controlled maps. 

Recall that a cancellative semigroup P is said to have the finite divisor property (or FDP) if each element of $P$ admits finitely many factorizations. 

\begin{theorem} \label{thm;impr}
Let $P$ and $Q$ be countable submonoids of groups $G$ and $H$, with $H$ amenable. Assume that $Q$ is left reversible and has FDP. If there exists a $(P, Q)$-map from $G$ to $H$, then $\cmax(\T_{\la}(P)^+)$ is residually finite dimensional.
\end{theorem}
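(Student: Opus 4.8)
The plan is to manufacture, from the $(P,Q)$-map, a coaction of $Q$ on $\T_{\la}(P)^+$ whose spectral subspaces are \emph{finite dimensional}, and then to feed this coaction into the residual finite dimensionality machinery of Clouatre and Dor-On \cite{ClD}, supplying its remaining hypotheses via Theorem~\ref{thm;char}. The whole point is that the only step in \cite{ClD} that forced $P$ to be independent was the production of this coaction out of the selfadjoint semigroup $\ca$-algebra; Theorem~\ref{thm;ClDeasy} now furnishes it for an arbitrary countable $P$.

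First I would fix a $(P,Q)$-map $\phi\colon G\to H$ and apply Theorem~\ref{thm;ClDeasy} to obtain the coaction
\[
\de_\phi\colon \T_{\la}(P)^+\longrightarrow \T_{\la}(P)^+\otimes \T_{\la}(Q)^+,\qquad L_p^P\longmapsto L_p^P\otimes L_{\phi(p)}^Q,
\]
of $Q$ on $\T_{\la}(P)^+$. The structural observation that drives everything is that, since $L$ is a semigroup homomorphism and hence products of generators are again generators, one has $\T_{\la}(P)^+=\cspan\{L_p\mid p\in P\}$; consequently the spectral subspace attached to $q\in Q$ is
\[
(\T_{\la}(P)^+)_q=\cspan\{L_p\mid p\in P,\ \phi(p)=q\}.
\]
Because $\phi$ is a $(P,Q)$-map, the set $\phi^{-1}(q)\cap P$ is finite, so this span is finite dimensional. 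These finite-dimensional fibers are exactly the building blocks that the Clouatre--Dor-On approximation argument consumes.

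Next I would verify the two remaining ingredients. The semigroup $Q$ is assumed to have FDP, which is what controls the multiplicative structure of the grading: each $q\in Q$ has only finitely many factorizations, so products of spectral subspaces stay manageable and the finite-dimensional compressions glue correctly. For the character hypothesis of \cite[Theorem 4.6]{ClD} — placed on the \emph{coacting} algebra $\T_{\la}(Q)^+$ — I would invoke Theorem~\ref{thm;char}: since $Q$ is left reversible and embeds in the amenable group $H$, the tensor algebra $\T_{\la}(Q)^+$ admits a character $\om$ with $\om(L_q^Q)=1$ for all $q\in Q$. This is precisely the assumption that Theorem~\ref{thm;char} was designed to clarify, and countability of $P$ and $Q$ guarantees the separability needed to run the approximation.

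With the coaction $\de_\phi$ available, its finite-dimensional spectral subspaces, the FDP of $Q$, and the character on $\T_{\la}(Q)^+$ all in place, every hypothesis of the Clouatre--Dor-On residual finite dimensionality theorem is met, and I would conclude that $\cmax(\T_{\la}(P)^+)$ is residually finite dimensional. I expect the main obstacle to be the bookkeeping at this last step rather than any new hard estimate: one must check that, once the coaction is handed over, the RFD construction of \cite{ClD} appeals only to the grading over $Q$, the finiteness of the fibers $\phi^{-1}(q)\cap P$, the FDP of $Q$, and the character on $\T_{\la}(Q)^+$ — and that no residual use of the selfadjoint semigroup $\ca$-algebra of $P$ (which would reintroduce independence) survives. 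Establishing that independence of $P$ enters \emph{only} through the construction of $\de_\phi$, now removed by Theorem~\ref{thm;ClDeasy}, is what upgrades their result from independent $P$ to arbitrary countable $P$.
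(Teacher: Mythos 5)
Your proposal is correct and follows essentially the same route as the paper: the paper's proof simply reruns the argument of \cite[Theorem 5.7]{ClD}, replacing their coaction construction \cite[Theorem 5.4]{ClD} (the only place independence of $P$ was needed) with Theorem~\ref{thm;ClDeasy}, exactly as you do, with Theorem~\ref{thm;char} supplying the character on $\T_{\la}(Q)^+$ under the weakened hypothesis that $Q$ is left reversible and $H$ amenable. Your additional bookkeeping (finite-dimensional spectral subspaces from the $(P,Q)$-map, FDP of $Q$, countability for separability) is precisely the content of the Clouatre--Dor-On argument that the paper invokes wholesale.
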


\begin{proof}
The proof is identical to that of \cite[Theorem 5.7]{ClD} if one instead of using \cite[Theorem 5.4]{ClD} in their proof now one uses our Theorem~\ref{thm;ClDeasy}.
\end{proof}

Theorem~\ref{thm;impr} improves \cite[Theorem 5.7]{ClD}, one of the main results of that paper, in two ways. First, it removes the assumption that the semigroups involved are independent. Furthermore, \cite[Theorem 5.7]{ClD} asks that the semigroup $Q$ is left amenable. As it can be seen in \cite[Theorem 5.5.42]{Lib} and its proof, left amenable semigroups embed in amenable groups. Furthermore, their Toeplitz algebra has to be nuclear. By asking $P$ to be merely left reversible we dispose of the nuclearity requirement. 

Our Theorem~\ref{thm;impr} allows us to obtain the following improvement of \cite[Corollary 5.8]{ClD}.

\begin{corollary} \label{cor;RFD}
Let $P$ be a submonoid of an amenable group $G$. If $P$ is left reversible and has FDP, then $\cmax(\T_{\la}(P)^+)$ is RFD.
\end{corollary}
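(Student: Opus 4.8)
The plan is to obtain Corollary~\ref{cor;RFD} as the diagonal special case of Theorem~\ref{thm;impr}. First I would set $Q:=P$, $H:=G$, and take $\phi:=\id_G$, the identity homomorphism of $G$. With these choices the standing hypotheses of Theorem~\ref{thm;impr} are inherited verbatim from the corollary: $H=G$ is amenable by assumption, and $Q=P$ is left reversible and has FDP, again by assumption. Thus the only thing that genuinely needs checking is that $\id_G$ qualifies as a $(P,Q)$-map in the sense of Clouatre and Dor-On, so that Theorem~\ref{thm;impr} becomes applicable.

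This verification is immediate. On the one hand $\id_G(P)=P\subseteq P=Q$, so the containment requirement holds. On the other hand, for each $q\in Q=P$ we have $\id_G^{-1}(q)\cap P=\{q\}\cap P=\{q\}$, a singleton and in particular finite, so the finite-fiber condition holds as well. Hence $\id_G$ is a $(P,P)$-map from $G$ to $G$, and an application of Theorem~\ref{thm;impr} yields that $\cmax(\T_{\la}(P)^+)$ is residually finite dimensional, which is exactly the assertion of the corollary.

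There is essentially no obstacle to overcome here: the entire content of the corollary is already absorbed into Theorem~\ref{thm;impr}, and the passage from the theorem to the corollary amounts to recognizing the identity map as a (trivially) controlled map. The one point I would flag is the countability requirement appearing in Theorem~\ref{thm;impr}; if countability of $P$ is not already part of the standing conventions of the paper, I would either include it in the statement of the corollary or remark that it may be imposed without loss, since the reduction to Theorem~\ref{thm;impr} is otherwise unaffected, so that the appeal is literally legitimate.
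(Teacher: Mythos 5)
Your proposal is correct and is exactly the argument the paper intends: the corollary is stated as an immediate consequence of Theorem~\ref{thm;impr}, obtained by taking $Q=P$, $H=G$, and $\phi=\id_G$, which is trivially a $(P,P)$-map. Your observation about the countability hypothesis is also apt, since Theorem~\ref{thm;impr} assumes countable submonoids while the corollary's statement omits this, so strictly one should either carry that hypothesis into the corollary or note it as a standing convention.
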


Using the results of this paper and ideas from \cite{DKKLL} we also obtain yet another (perhaps more transparent) proof for the existence a co-universal $\ca$-algebra for equivariant, Li covariant representations. The real bonus here is that on our way we obtain Theorem~\ref{P:P cover} which provides a significant generalization of the important Theorem~\ref{prop;identify}.

If $\A$ is an operator algebra and $\de \colon \A \to \A \otimes \ca(G)$ is a  coaction on $\A$, then we will refer to the triple $(\A, G, \de)$ as a  \emph{cosystem}. 
A map $\phi \colon \A \to \A'$ between two cosystems $(\A, G, \de)$ and $(\A', G, \de')$ is said to be \emph{$G$-equivariant}, or simply \emph{equivariant}, if $\de' \phi=(\phi\otimes \id)\de$.

\begin{definition}\label{D:coaction}
Let $(\A, G, \de)$ be a cosystem.
A triple $(C', \iota', \de')$ is called a \emph{C*-cover} for $(\A, G, \de)$ if  $(C', G, \de')$ forms a cosystem and $(C', \iota')$ forms a C*-cover of $\A$ with $\iota : \A\rightarrow C'$ being equivariant.
\end{definition}

\begin{definition}
Let $(\A, G, \de)$ be a cosystem.
The \emph{C*-envelope of $(\A, G, \de)$} is a C*-cover for $(\A, G, \de)$, denoted  by $( \cenv(\A, G, \de), \iotenv, \delenv)$,
that satisfies the following property: 
for any other C*-cover $(C', \iota', \de')$ of  $(\A, G, \de)$ there exists an equivariant  $*$-epimorphism $\phi \colon C' \to \cenv(\A, G, \de)$ that makes the following diagram
\[
\xymatrix{
& & C' \ar@{.>}[d]^{\phi} \\
\A \ar[rru]^{\iota'} \ar[rr]^{\iotenv} & & \cenv(\A, G, \de)
}
\]
commutative. We will often omit the embedding $\iotenv$ and the coaction $\delenv$  and  refer to the triple simply  as $ \cenv(\A, G, \de)$.
\end{definition}

As in the case of the C*-envelope for an operator algebra, it is easily seen that if the C*-envelope for a cosystem exists, then it is unique up to a natural notion of  isomorphism for cosystems.
The existence of the C*-envelope for an arbitrary cosystem was established in \cite[Theorem 3.8]{DKKLL}, where it was used as a means to solve an open problem from \cite{CLSV11}.  For information regarding $\ca$-envelopes and their theory, the reader should consult \cite{BL04, Pau02}. The origins of the theory go back to the seminal work of Arveson from the late 60's \cite{Arv69, Arv72}.

In Theorem~\ref{prop;identify} we saw that $\bTl(P)$ contains canonically a completely isometric copy of the tensor algebra $\T_{\la}(P)^+$. Our next result shows that a variety of other $\ca$-algebras may also contain a copy of the tensor algebra $\T_{\la}(P)^+ \simeq \bTl(P)^+$.

\begin{theorem}\label{P:P cover}
Let $P$ be a submonoid of a group $G$. 
Let $(B, G, \de)$ be a cosystem for which there exists an equivariant epimorphism
\[
\phi \colon \bTl (P) \longrightarrow B.
\]
Then $\phi|_{\bTl (P) ^+}$ is completely isometric and therefore $(B, G, \de)$ forms a C*-cover for the cosystem $(\bTl (P) ^+, G, \dep)$.
\end{theorem}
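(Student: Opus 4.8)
The plan is to recover $\phi$ on the positive part as a completely isometric map produced by the \emph{first} Fell absorption principle, and then to strip off the spurious tensor factor using the coaction on $B$. Throughout set $W_p := \phi(\eL_p)$ for $p \in P$. First I would check that $\{W_p\}_{p\in P}$ is a Li-covariant representation of $P$. Indeed $\eL_e = I$; for a neutral word $a$ the operator $\dot L_a$ is the orthogonal projection onto $\ell^2(K(a))$, so $\dot\eL_a = 0$ when $K(a)=\emptyset$ and $\dot\eL_a=\dot\eL_b$ when $K(a)=K(b)$; hence (T1)--(T3) hold for $\{\eL_p\}$ in the unit fibre of $\bTl(P)$, and since $\phi$ is a unital $*$-homomorphism they pass to $\{W_p\}$. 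By Theorem~\ref{thm;Li} (identifying $\bTu(P)$ with $\ca_s(P)$), this integrates to a $*$-representation $\pi\colon\bTu(P)\to B$ with $\pi(\bt_p)=W_p$. Applying Theorem~\ref{thm;main1} to $\pi$ shows that
\[
\sigma\colon \bTl(P)\longrightarrow \bTl(P)\otimes B,\qquad \eL_p\longmapsto \eL_p\otimes W_p,
\]
extends to an \emph{injective} $*$-representation; being injective, $\sigma$ is completely isometric, and so is its restriction to $\bTl(P)^+$.

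Next I would manufacture a completely contractive homomorphism $\psi$ satisfying $\psi(W_p)=\eL_p\otimes W_p$, which will serve as a retraction recovering $\sigma$ from $\phi$. The canonical epimorphism $l\colon\ca(G)\to\ca_r(G)$ is completely contractive; the subspace $\ell^2(P)\subseteq\ell^2(G)$ is invariant under $l_p$ for every $p\in P$ (since $pq\in P$ whenever $q\in P$), and compression to an invariant subspace is a completely contractive homomorphism carrying $l_p$ to the left regular isometry $L_p$. Composing with the complete isometry $L_p\mapsto\eL_p$ of Theorem~\ref{prop;identify} yields a completely contractive homomorphism $\tau\colon \ol{\alg}\{u_p\mid p\in P\}\to\bTl(P)$ with $\tau(u_p)=\eL_p$. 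By equivariance of $\phi$ one has $W_p\in B_g$ for $g=p$, hence $\de(W_p)=W_p\otimes u_p$; therefore the composition
\[
\psi := (\tau\otimes\id_B)\circ\mathrm{flip}\circ\de\big|_{\phi(\bTl(P)^+)}
\]
(with $\mathrm{flip}\colon B\otimes\ca(G)\to\ca(G)\otimes B$) is a completely contractive homomorphism into $\bTl(P)\otimes B$ with $\psi(W_p)=\tau(u_p)\otimes W_p=\eL_p\otimes W_p$.

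The conclusion is then a short norm chase. The homomorphisms $\psi\circ\phi$ and $\sigma$ agree on the generators $\eL_p$, hence on all of $\bTl(P)^+$, so for every $x\in M_n(\bTl(P)^+)$,
\[
\|x\|=\|\sigma^{(n)}(x)\|=\|\psi^{(n)}(\phi^{(n)}(x))\|\le\|\phi^{(n)}(x)\|\le\|x\|,
\]
forcing $\|\phi^{(n)}(x)\|=\|x\|$; thus $\phi|_{\bTl(P)^+}$ is completely isometric. Since $\phi$ is a surjective equivariant $*$-homomorphism and $\bTl(P)$ is generated as a $\ca$-algebra by $\bTl(P)^+$, the pair $(B,\phi|_{\bTl(P)^+})$ is a C*-cover of $\bTl(P)^+$, and equivariance promotes it to a C*-cover of the cosystem $(\bTl(P)^+,G,\dep)$, as claimed.

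The hard part will be the construction of $\psi$ in the second paragraph. The natural first instinct — comparing conditional expectations onto the unit fibres — fails, because $\phi$ may collapse the unit fibre $\bTl(P)_e$ drastically, so no Fejér/expectation argument is directly available. The key realization that unblocks the proof is that the coaction on $B$ hands us the factor $W_p\otimes u_p$ for free and completely isometrically, leaving only the task of converting the group-algebra factor into a copy of $\bTl(P)$ via $u_p\mapsto\eL_p$; this last conversion is completely contractive \emph{precisely} because the left regular isometries $L_p$ are compressions of the regular representation $\{l_p\}$ of $G$ to the invariant subspace $\ell^2(P)$, together with Theorem~\ref{prop;identify}. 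Verifying that each ingredient of $\psi$ (and of $\tau\otimes\id_B$ on the minimal tensor product) is genuinely completely contractive, and that $\psi\circ\phi=\sigma$ holds beyond generators, are the routine but essential points to pin down.
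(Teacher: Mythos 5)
Your proof is correct and is essentially the paper's own argument: both apply Theorem~\ref{thm;main1} to the representation of $\bTu(P)$ induced by $\phi$ (yours obtained via Li-covariance and Theorem~\ref{thm;Li}, the paper's simply as $\phi\circ\bl$) to get the complete isometry $\eL_p\mapsto\eL_p\otimes\phi(\eL_p)$, and both then undo the extra tensor factor with a completely contractive homomorphism built from the coaction $\de$ (which gives $\phi(\eL_p)\mapsto\phi(\eL_p)\otimes u_p$ for free) composed with a map sending $u_p\mapsto\eL_p$, finishing with the identical norm chase. The only cosmetic difference is in that last map: where you compress $l_p$ to $L_p$ on $\ell^2(P)$ and invoke Theorem~\ref{prop;identify}, the paper uses in one step the u.c.p.\ map $\ca(G)\to\B(\eel)$, $u_g\mapsto P_{\eel}\,\enl_g|_{\eel}$, which is multiplicative on the algebra generated by $\{u_p\}_{p\in P}$.
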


\begin{proof}
Consider the unital completely positive map
\[
\psi \colon \ca(G) \longrightarrow \ca_r(G) \longrightarrow \B(\bar{\ell}^2(P)) : u_g \mapsto \enl_g \mapsto P_{\eel} \enl_g |_{\eel}
\]
which is multiplicative on the subalgebra of $\ca(G)$ generated by all $u_p$, $p \in P$. Since $\psi(u_p) = \eL_p$ for all $p\in P$, the following diagram of completely contractive homomorphisms
\[
\xymatrix{
\bTl (P)^+ \ar[d]_{\phi} \ar[rr] & & B \otimes \bTl (P) \\
B \ar[rr] & & B \otimes \ca(G) \ar[u]_{\id \otimes \psi}
}
\]
commutes.
By Theorem~\ref{thm;main1} the upper horizontal map is a restriction of an injective $*$-homomorphism and thus it is completely isometric.
Hence $\phi$ is completely isometric.
\end{proof}

Note that we could have obtained Theorem~\ref{prop;identify} as a corollary of Theorem~\ref{P:P cover} by considering the equivariant epimorphism 
\[
\bTl(P) \ni \eL_p\longmapsto L_p \in \T_{\la}(P), \,\, p \in P.
\]
However, the current proof is more direct as it does not require the use of our Fell's absorption principle (Theorem~\ref{thm;main1}).

\begin{definition}
We say that a Li-covariant representation $v$ of a submonoid $G$ is  \emph{gauge-compatible}, or simply \emph{equivariant} if $\ca(v)$ admits a coaction of $G$  that makes the canonical epimorphism $\bTu (P) \rightarrow \ca(v)$ equivariant with respect to the natural  (gauge) coaction of $G$ on $\bTu (P) $. 
\end{definition}

\begin{definition}\label{D:couniversal}
 Let $P$ be a submonoid of a group $G$.
Suppose  $(\couni, G,\gamma)$ is a cosystem and $j:P \to \couni$ is a Li-covariant isometric representation, with integrated version denoted by $j_*: \ca_s(P) \to \couni$.
We say that  $(\couni, G, \gamma, j)$ has the \emph{co-universal property for equivariant, Li-covariant  representations of $P$} if
\begin{enumerate}
\item $j_*: \ca_s(P) \to \couni$ is  $\hat{\de}$-$\gamma$ equivariant; and 
\item for every  equivariant, Li-covariant representation $v: P \rightarrow \ca(v)$, 
there is a surjective $*$-homomorphism $\phi : \ca (v) \rightarrow \couni$ such that $$\phi(v_p) = j_p, \mbox{ for all }  p \in P.$$
\end{enumerate}
Notice that, as observed at the beginning of \cite[Section 4]{CLSV11},  the map $\phi$ is automatically equivariant because $j_*$ and $t_*$ are surjective.
\end{definition}

Our next result shows that the C*-envelope of the tensor algebra $\T_\la(P)^+$ taken with its  natural coaction satisfies the co-universal property. This result was first obtained in \cite{KKLL} and later was generalized in \cite{Seh}. Both proofs make use of strong covariance, as developed in \cite{Seh18}. Our proof below is self-contained and perhaps more transparent.

\begin{theorem} \label{T:co-univ}
Let $P$ be a submonoid of a group $G$.
Let 
$\dep \colon \bTl (P)^+ \rightarrow \bTl (P)^+ \otimes \ca(G)$ be
the restriction of the coaction from Proposition \ref{P:f coaction} to $\bTl (P)^+$. Then the C*-envelope  
$
 (\cenv(\bTl (P)^+, G, {\dep}),\delenv,\iotenv)
$
of the cosystem $(\bTl (P)^+, G, {\dep})$ 
 satisfies the co-universal property associated with equivariant, Li-covariant  representations of $P$.
\end{theorem}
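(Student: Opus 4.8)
The plan is to take $\couni := \cenv(\bTl (P)^+, G, \dep)$ with the coaction $\gamma := \delenv$, and to define $j \colon P \to \couni$ by $j_p := \iotenv(\eL_p)$. Since $\bTl(P)^+$ generates $\bTl(P)$ and $\dep$ is the restriction of the coaction $\bar{\de}$ of Proposition~\ref{P:f coaction}, the triple $(\bTl(P), G, \bar{\de})$ is an equivariant C*-cover of the cosystem $(\bTl(P)^+, G, \dep)$; hence the minimality property of the C*-envelope of a cosystem \cite[Theorem 3.8]{DKKLL} furnishes an equivariant epimorphism $q \colon \bTl(P) \to \couni$ whose restriction to $\bTl(P)^+$ is $\iotenv$. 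In particular $j_p = q(\eL_p)$, so $j$ is Li-covariant because $q$ is a $*$-homomorphism and $\eL$ realizes the relations (T1)--(T3) in the unit fibre of $\bTl(P)$. The integrated form $j_*$ equals $q \circ \bl$ composed with the canonical isomorphism $\ca_s(P) \simeq \bTu(P)$ of Theorem~\ref{thm;Li}, a composition of equivariant maps; this gives condition (i) of the co-universal property.

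The whole content lies in condition (ii). I would deduce it from minimality of the C*-envelope once I know that every equivariant, Li-covariant representation $v$ makes $\ca(v)$ an equivariant C*-cover of $(\bTl(P)^+, G, \dep)$: for then the universal property of $\couni$ produces an equivariant epimorphism $\ca(v) \to \couni$ carrying $v_p$ to $j_p$, which is exactly (ii). Since $v$ is equivariant the coaction and generation requirements are automatic, so being a C*-cover reduces to the single assertion that $\eL_p \mapsto v_p$ extends to a \emph{completely isometric} map on $\bTl(P)^+$; this is the crux.

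To prove this I would route through reduced cross-sectional algebras. By equivariance $\ca(v)$ is topologically graded by its Fell bundle $\B^v = \{\ca(v)_g\}_{g \in G}$ with expectation $(\id \otimes F_e)\circ\be$, and the canonical $v_* \colon \bTu(P) \to \ca(v)$ is an equivariant epimorphism; by Exel's theory \cite{Exe97, Exe17} there is a further equivariant epimorphism $\Lambda_v \colon \ca(v) \to \ca_r(\B^v)$ onto the reduced cross-sectional algebra, which carries a \emph{faithful} expectation $E^v_r$ onto its unit fibre. I would then show that $\Phi := \Lambda_v \circ v_* \colon \bTu(P) \to \ca_r(\B^v)$ kills $\ker \bl$, so that it descends to an equivariant epimorphism $\bar{\Phi} \colon \bTl(P) \to \ca_r(\B^v)$ with $\bar{\Phi}(\eL_p) = \Lambda_v(v_p)$. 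Indeed, $\bl$ intertwines the canonical expectation $E_{\bTu}$ on $\bTu(P)$ with the faithful reduced expectation on $\bTl(P)$ and is isometric on the unit fibre, so $\bl(x) = 0$ forces the unit-fibre element $E_{\bTu}(x^*x)$ to vanish; since $\Phi$ intertwines $E_{\bTu}$ with $E^v_r$ (both $v_*$ and $\Lambda_v$ being equivariant) and $E^v_r$ is faithful, it follows that $\Phi(x) = 0$.

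Now Theorem~\ref{P:P cover}, applied to $\bar{\Phi}$, shows that $\bar{\Phi}|_{\bTl(P)^+}$ is completely isometric. Combined with the complete contractivity of $\eL_p \mapsto v_p$ (this map is $v_*$ under $\bTl(P)^+ \simeq \bTu(P)^+$) and the contractivity of $\Lambda_v$, this forces $\eL_p \mapsto v_p$ itself to be completely isometric, so $\ca(v)$ is an equivariant C*-cover and (ii) follows. The main obstacle is exactly the factorization of $\Phi$ through the \emph{reduced} algebra $\bTl(P)$, which is what licenses the use of Theorem~\ref{P:P cover} and which genuinely uses both equivariance and faithfulness of the reduced expectation. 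As a sanity check I would keep $P = \bN$ in mind, where $\couni \cong C(\bT)$ is a \emph{proper} quotient of $\bTl(\bN) \cong \T$ and the unit-fibre map is far from injective; thus one cannot hope that $\ca(v)$ surjects onto $\bTl(P)$ itself, and passing to $\ca_r(\B^v)$ is unavoidable.
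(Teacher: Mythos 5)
Your proof of condition (i) and the bulk of your proof of condition (ii) follow the paper's own route: both arguments pass from $\ca(v)$ to the reduced cross-sectional algebra $\ca_r(\B^v)$ of its grading, use equivariance together with faithfulness of the reduced conditional expectation to show that $\Lambda_v \circ v_*$ annihilates $\ker \bl$, and then apply Theorem~\ref{P:P cover} to the induced equivariant epimorphism $\bar\Phi \colon \bTl(P) \to \ca_r(\B^v)$. The genuine error is in your last inference. You assert that $\eL_p \mapsto v_p$ is completely contractive on $\bTl(P)^+$ ``under $\bTl(P)^+ \simeq \bTu(P)^+$'' and conclude that $\ca(v)$ itself is an equivariant C*-cover of $(\bTl(P)^+, G, \dep)$. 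The identification $\bTu(P)^+ \simeq \bTl(P)^+$ is established nowhere (Theorem~\ref{prop;identify} compares $\bTl(P)^+$ with $\T_{\la}(P)^+$, both reduced-type objects, and says nothing about the universal algebra $\bTu(P)$), and it is false in general. Take $P = G = \bF_2$. Since Li-covariant representations of a group are exactly its unitary representations, Theorem~\ref{thm;Li} gives $\bTu(\bF_2) \simeq \ca_s(\bF_2) \simeq \ca(\bF_2)$, while $\bTl(\bF_2) \simeq \ca_r(\bF_2)$; moreover the generators are unitaries, so both non-selfadjoint algebras are selfadjoint, and your identification asserts that the regular quotient $\ca(\bF_2) \to \ca_r(\bF_2)$ is isometric, contradicting non-amenability. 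The same example refutes the intermediate conclusion: the universal unitary representation $v = u$ of $\bF_2$ is equivariant and Li-covariant, yet $\ca(v) = \ca(\bF_2)$ is not a C*-cover of $(\ca_r(\bF_2), \bF_2, \dep)$, because an isometric homomorphism $\ca_r(\bF_2) \to \ca(\bF_2)$ with $l_g \mapsto u_g$ would force $\big\| \sum \la_g u_g \big\| = \big\| \sum \la_g l_g \big\|$ for all finite sums, i.e., amenability of $\bF_2$.

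Fortunately the false step is also an unnecessary one, and deleting it leaves exactly the paper's proof. Once you know that $\bar\Phi|_{\bTl(P)^+}$ is completely isometric, it is $\ca_r(\B^v)$ --- not $\ca(v)$ --- that is an equivariant C*-cover of the cosystem. The defining property of the C*-envelope then yields an equivariant $*$-epimorphism $\ca_r(\B^v) \to \couni$ sending $\Lambda_v(v_p) = \bar\Phi(\eL_p)$ to $j_p$, and precomposing with $\Lambda_v \colon \ca(v) \to \ca_r(\B^v)$ produces the surjection $\ca(v) \to \couni$ with $v_p \mapsto j_p$ demanded by condition (ii). In other words, condition (ii) only requires the chain of epimorphisms $\ca(v) \to \ca_r(\B^v) \to \cenv(\bTl(P)^+, G, \dep)$, which is precisely how the paper concludes; it does not require $\ca(v)$ to be a cover, and in general it is not one.
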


\begin{proof}
By definition $(\cenv(\bTl (P)^+, G, {\dep}),\delenv,\iotenv)$ is generated by a Li-covariant, $G$-compatible representation of $P$.
It remains to show that it has the required co-universal property.

Let $\ol{E}$ be the faithful conditional expectation on $\bTl (P)$ and let $$\bl \colon \ca_s(P) \simeq \bTu (P) \to \bTl (P)$$ be the enhanced left regular representation.
Then by \cite[Proposition 3.6]{Exe97} we have 
\[
\ker \bl  = \{a\in \bTu (P) \mid \ol{E}(a^*a) = 0\}.
\]

Let $v$ be  an equivariant, Li-covariant representation of $P$.
Then $\ca(v)$ admits a $G$-grading and let us write $\B = \{B_g\}_{g \in G}$ for this grading of $\ca(v)$.
Due to the existence of the conditional expectation on $\ca(v)$, by \cite[Theorem 3.3]{Exe97}  there exists a canonical equivariant $*$-epimorphism
\[
\phi \colon \ca_s (P)  \longrightarrow \ca(v) \longrightarrow \ca_r(\B)
\]
where $\ca_r(\B)$ is the reduced cross sectional  C*-algebra of the Fell bundle $\B$. If $E'$ is the associated faithful conditional expectation on $\ca_r(\B)$, then $\phi$ intertwines $\ol{E}$ and $E'$ due to its equivarience.

Let $a \in \ker \bl$. As $\phi$ intertwines the conditional expectations  $\ol{E}$ and $E'$, we derive that $E'(\phi(a^*a)) = 0$ and so $\phi(a) = 0$, because $E'$ is faithful.
Since $a$ was arbitrary in $\ker \bl$ we get that $\ker \bl \subseteq \ker \phi$.
Hence there is an induced $*$-homomorphism $\phi'$ that makes the following diagram
\[
\xymatrix{
\ca_s (P)\ar[rr]^{\phi} \ar[rd]^{\bl} & & \ca_r(\B) \\
& \bTl (P) \ar@{.>}[ur]^{\phi'} &
}
\]
commutative.
By construction $\phi'$ is equivariant and so Proposition \ref{P:P cover} implies that $\ca_\la(\B)$ is a C*-cover of $(\bTl (P)^+, G, \dep)$.
Therefore we have the following $*$-epimorphisms
\[
\ca(v) \longrightarrow \ca_r (\B) \longrightarrow \cenv(\bTl (P)^+, G, {\dep),}\delenv,\iotenv),
\]
which establishes that  $ \cenv(\bTl (P)^+, G, {\dep),}\delenv,\iotenv)$ satisfies the  co-universal property for $P$, as desired. 
\end{proof}

\section{Concluding remarks and open problems}
\noindent (i) In this paper we advocate for an alternative approach in the study of semigroups and their Hilbert space representation theory. An approach which instead of having the left regular representation as a centerpiece, it focuses on the enhanced left regular representation. Such an approach is of course dedicated to semigroups which do not satisfy independence. Therefore it seems fitting to present a very accessible  example of a submonoid that does not satisfy independence for the benefit of the uninitiated reader. The example is taken from \cite{Lib}, where a variety of additional examples can be found. Additional examples can be found in \cite[Section 9.2]{LaS}.

\begin{example} Consider the additive semigroup $P:=\bN \backslash\{1\}$. Following \cite[pg. 229]{Lib} we show that $P$ does not satisfy independence. 

Indeed, consider the constructible ideals 
\[
k+P= L_kL_k^* =\{k , k+2, k+3, \dots\}, \,\, k= 2, 3, \dots.
\]
Then
\[
5+\bN= (2+P)\cap(3 +P)
\]
is also a constructible ideal. However $5+\bN = (5+P)\cup (6+P)$, and yet $5+\bN \neq k +P$, for any 
 $k= 2, 3, \dots$. Hence $P$ does not satisfy independence.
 
 Actually, a similar argument shows that any numerical semigroup of the form $\bN\backslash F$ with $F \subseteq \bN$ finite, fails the independent condition.
 \end{example}

\noindent(ii) We would like to know the answer to the following problem

\begin{problem}
Let $P$ be a submonoid of a group $G$, let $L$ be the left regular representation of $P$ and let $V= \{ V_p\}_{p \in P}$ be a representation of $P$ by isometries on Hilbert space $\H$. Characterize the representations $V$ for which $L_p \mapsto L_p \otimes V_p$ extends to an isometric representation of $\T_{\la}(P)^+$.
\end{problem}

Theorem~\ref{thm;main3} gives an answer when $P$ is a left reversible submonoid of an amenable group. We are wondering what happens beyond that case.

\begin{acknow}
Elias Katsoulis was partially supported by the NSF grant DMS-2054781.
\end{acknow}



\begin{thebibliography}{99}


\bibitem{Arv69} W.B. Arveson,
\textit{Subalgebras of C*-algebras},
Acta Math.\ \textbf{123} (1969), 141--224.

\bibitem{Arv72} W.B. Arveson, 
\textit{Subalgebras of C*-algebras. II}, 
Acta Math.\ \textbf{128} (1972), no.\ 3--4, 271--308

\bibitem{BL04} D.P. Blecher and C. Le Merdy,
\textit{Operator algebras and their modules---an operator space approach}, 
volume \textbf{30} of \textit{London Mathematical Society Monographs, New Series}. 
Oxford Science Publications.
The Clarendon Press, Oxford University Press, Oxford, 2004. x+387 pp.

\bibitem{CLSV11} T.M. Carlsen, N.S. Larsen, A. Sims and S.T. Vittadello,
\textit{Co-universal algebras associated to product systems, and gauge-invariant uniquenss theorems},
Proc.\ Lond.\ Math.\ Soc.\ (3) \textbf{103} (2011), no.\ 4, 563--600.
 
\bibitem{ClD} R. Clou\^{a}tre and A. Dor-On,
\textit{Finite-dimensional approximations and semigroup coactions for operator algebras}, International Mathematics Research Notices, to appear, arXiv:2101.09776.


\bibitem{CR19} R. Clou\^{a}tre and C. Ramsey, 
\textit{Residually finite-dimensional operator algebras}, 
J.\ Funct.\ Anal.\ \textbf{277} (2019), no.\ 8, 2572--2616.

\bibitem{Lib}  J. Cuntz, S. Echterhoff, X. Li and G. Yu, 
\textit{K-theory for group $\ca$-algebras and semigroup $\ca$-algebras}, 
 Oberwolfach Seminars, vol. 47. Birkhäuser/Springer, Cham, 2017. ix+319 pp. ISBN: 978-3-319-59915-1

\bibitem{DK20} A. Dor-On and E.G. Katsoulis, 
\textit{Tensor algebras of product systems and their C*-envelopes}, 
J.\ Funct.\ Anal.\ \textbf{278} (2020), no.\ 7, 108416, 32 pp.

\bibitem{DKKLL} A. Dor-On, E. Kakariadis, E. Katsoulis M. Laca and X. Li,
\textit{$\ca$-envelopes for operator algebras with a coaction and co-universal $\ca$-algebras for product systems}, Adv.\ Math.  \textbf{400} (2022), Paper No. 108286.

\bibitem{Exe97} R. Exel,
\textit{Amenability of Fell bundles},
J.\ Reine Angew.\ Math.\ \textbf{492} (1997), 41--73.

\bibitem{Exe17} R. Exel,
\textit{Partial dynamical systems, Fell bundles and applications},
volume \textbf{224} of \textit{Mathematical Surveys and Monographs, American Mathematical Society}.
American Mathematical Society, Providence, RI, 2017. vi+321 pp.

\bibitem{KKLL} E. Kakariadis, E. Katsoulis, M. Laca and X. Li,
\textit{ Boundary quotient $\ca$-algebras of semigroups},
J. Londaon Math Soc. \textbf{105} (2022), DOI: 10.1112/jlms.12557
 
\bibitem{LR96} M. Laca and I. Raeburn,  
\textit{Semigroup crossed products and the Toeplitz algebras of nonabelian groups},
J.\ Funct.\ Anal.\ \textbf{139} (1996), no.\ 2, 415--440. 

\bibitem{LaS} M. Laca and C. Sehnem,
\textit{Toeplitz algebras of semigroups}, Trans. Amer. Math. Soc. \textbf{375} (2022), 7443--7507. 

\bibitem{Li12} X. Li, 
\textit{Semigroup $\ca$-algebras and amenability of semigroups}, J. Funct. Anal. \textbf{262} (2012), 4302--4340. 

 \bibitem{Li13} X. Li, 
\textit{Nuclearity of semigroup C*-algebras and the connection to amenability}, 
Adv.\ Math.\ \textbf{244} (2013), 626--662. 

\bibitem{Li2020} X. Li, 
\textit{K-theory for semigroup C*-algebras and partial crossed products},  
(preprint) arXiv:2003.03858.

\bibitem{MS98} P.S. Muhly and B. Solel, 
\textit{Tensor algebras over C*-correspondences: representations, dilations, and C*-envelopes}, 
J.\ Funct.\ Anal.\ \textbf{158} (1998), no.\ 2, 389--457.

\bibitem{MS00} P.S. Muhly and B. Solel, 
\textit{On the Morita equivalence of tensor algebras}, 
Proc.\ London Math.\ Soc.\ (3) \textbf{81} (2000), no.\ 1, 113--168.

\bibitem{Nor14} M.D. Norling, 
\textit{Inverse semigroup C*-algebras associated with left cancellative semigroups}, 
Proc.\ Edinb.\ Math.\ Soc.\ (2) \textbf{57} (2014), no.\ 2, 533--564.

\bibitem{Pat} A. Paterson,
\textit{Amenability}, Mathematical Surveys and Monographs \textbf{29}, Amer. Math. Soc., Providence, RI, 1988.

\bibitem{Pau02} V.I. Paulsen,
\textit{Completely bounded maps and operator algebras}, 
volume \textbf{78} of \textit{Cambridge Studies in Advanced Mathematics}.
Cambridge University Press, Cambridge, 2002. xii+300 pp.


\bibitem{Qui96} J. Quigg, 
\textit{Discrete C*-coactions and C*-algebraic bundles}, 
J.\ Austral.\ Math.\ Soc.\ Ser.\ A \textbf{60} (1996), no.\ 2, 204--221.

\bibitem{Seh} C. Sehnem,
\textit{$\ca$-envelopes of tensor algebras of product systems}, 
J.\ Funct.\ Anal. \textbf{283}, 109707

\bibitem{Seh18} C.F. Sehnem, 
\textit{On C*-algebras associated to product systems}, 
J.\ Funct.\ Anal.\ \textbf{277} (2019), no.\ 2, 558--593.

\end{thebibliography}
\end{document}